\numberwithin{equation}{section}
\newcommand{\R}{{\mathbb R}}
\newcommand{\be}{\begin{equation}}
\newcommand{\ee}{\end{equation}}
\newcommand{\sq}{\sqrt}
\newcommand{\pt}{\partial}
\newcommand{\ben}{\begin{eqnarray*}}
\newcommand{\enn}{\end{eqnarray*}}
\newcommand{\Om}{\Omega}
\newcommand{\ka}{\kappa}
\newcommand{\va}{\varepsilon}
\newcommand{\ti}{\tilde}
\newtheorem{theorem}{\textbf Theorem}[section]
\newtheorem{lemma}{\textbf Lemma}[section]
\newtheorem{rem}{\textbf Remark}[section]
\newtheorem{defin}{\textbf Definition}[section]
\def\endProof{{\hfill$\Box$}}
\numberwithin{equation}{section}
\begin{document}

\title{{\textbf{Boundary layer problem on chemotaxis-Navier-Stokes system with Robin boundary conditions} }}
\author{Qianqian Hou\thanks{Institute for Advanced Study in Mathematics, Harbin Institute of Technology, Harbin 150001, People's Republic of China
({\tt qianqian.hou@hit.edu.cn}).}}
\date{}
\maketitle

\begin{quote}
{\sc Abstract} This paper is concerned with the boundary layer problem on a chemotaxis-Navier-Stokes system modelling boundary layer formation of aerobic bacteria in fluid. Completing the system with physical
Robin-type boundary conditions for oxygen, no-flux and Dirichlet boundary conditions for bacteria and fluid velocity, we show that the gradients of its radial solutions in a region between two concentric spheres possessing boundary layer effects as the oxygen diffusion rate $\va$ goes to zero and the boundary-layer thickness is of order $\mathcal{O}(\va^\alpha)$ with $0<\alpha<\frac{1}{2}$.

\noindent
{\sc MSC}:{35A01, 35B40, 35B44, 35K57, 35Q92, 92C17}

\noindent
{\sc Keywords}: {Chemotaxis, Robin boundary conditions, Boundary layers, BL-thickness}
\maketitle
\end{quote}
\maketitle
\section{Introduction}
\textbf{Chemotaxis-Navier-Stokes model.}
Chemotaxis is a biological process, where bacteria orient their movement toward a chemically more favourable environment. Taking into account the fact that aerobic bacteria (e.g. \emph{Bacillus subtilis}) often live in fluid with dissolved oxygen (e.g. water), Tuval et al. conducted a detailed experimental and theoretical study on the interaction of bacterial chemotaxis, chemical diffusion and fluid convection. In particular, by placing a drop of water containing \emph{Bacillus subtilis} in a chamber with its upper surface open to the atmosphere, they observed that bacterial cells quickly get densely packed in a relatively thin fluid layer below the water-air interface through which the oxygen diffuses to the water drop (cf. \cite{Tuval}). The dynamics of the bacterial concentration $w(x,t)$, oxygen concentration $c(x,t)$ and fluid velocity $\vec{u}(x,t)$ can be encapsuled in  the following mathematical model
\be\label{cns}
\left\{
\begin{array}{lll}
w_t+\vec{u}\cdot\nabla w+\nabla\cdot(w \nabla c)=D_w \Delta w, \quad \ \ \ \ \  x\in \Om,\ \ t>0,
\\
c_t+\vec{u}\cdot \nabla c+w c=\va\, \Delta c,\qquad \qquad\qquad\   x\in \Om,\ \ t>0,\\
\vec{u}_t+\vec{u}\cdot\nabla \vec{u}+\nabla p+w\nabla \phi=D\Delta\vec{u},\quad\quad \ \ \ x\in \Om,\ \ t>0,\\
\nabla\cdot\vec{u}=0,\qquad\qquad\qquad\qquad \qquad \quad \ \ \ x\in \Om,\ \ t>0.\\
\end{array}
\right.
\ee
Here $\Om$ is a domain in $\R^{n}$ modelling the water drop, the gravitational potential $\phi(x)$ is a known smooth function and $p(x,t)$ denotes the associated scalar pressure of the fluid. The nonnegative constants $D_w$, $\va$ and $D$ are diffusion rates of the bacterial cells, oxygen and fluid velocity, respectively. The first two equations in system \eqref{cns} describe the chemotactic movement of bacterial cells toward increasing gradients of oxygen concentration where both bacteria and oxygen diffuse and are convected with the fluid. The third and fourth equations in \eqref{cns} are the well-known Navier-Stokes describing the evolution of viscous incompressible fluids with an additional term $w\nabla \phi$ corresponding to the gravity force effect of bacterial cells on the fluid. Neglecting the influence of surroundings on the oxygen concentration of the fluid, lots of analytical studies on \eqref{cns} in the literature adopt the following boundary conditions:
\be\label{bc1}
\nabla w(x,t)\cdot \vec{\nu}=0,\qquad \nabla c(x,t)\cdot \vec{\nu}=0,\qquad \vec{u}(x,t)=\mathbf{0},
\qquad x\in \partial \Om,\ \ t>0,
\ee
where $\vec{\nu}$ is the outward unit normal to the boundary $\partial \Om$ and the homogeneous Neumann boundary condition has been prescribed for $c$. See \cite{lorz2010,wang-winkler-xiang2018,winkler2012,winkler2014arma,winkler2016,winkler2017,zhang-li2015} for the solvability of \eqref{cns}-\eqref{bc1} when $\Om$ is a bounded domain in $\R^{n}\,(n=2,3)$ and see \cite{chae-kang-li2012,chae-kang-li2014,duan-lorz-markowich2010,liu-lorz2011,zhang-zheng2014}
when $\Om=\R^{n}$.
\newline
~\\
\textbf{Physical boundary conditions.}
Since part of the water drop in the chamber (cf. \cite{Tuval}) is surrounded by air, which leads to inevitable exchange between the dissolved oxygen in the drop and the free oxygen in surroundings, it has been suggested to use different, more realistic, inhomogeneous boundary conditions for the oxygen concentration $c$ (see \cite{braukhoff2017,braukhoff-tang2020} and \cite{Tuval}). Employing the well-known Henry's law that models the dissolution of gas in water (cf. \cite{atkins-paula2006}), Braukhoff-Tang proposed the following physical conditions:
\be\label{bc2}
\left\{
\aligned
&(D_w \nabla w-w\nabla c)(x,t)\cdot \vec{\nu}=0,\quad
\nabla c(x,t)\cdot \vec{\nu}=\kappa(x)(\lambda(x)-c(x,t)),\\
&\vec{u}(x,t)=\mathbf{0},
\ \qquad x\in \partial \Om,\ \ t>0,
\endaligned
\right.
\ee
in \cite{braukhoff2017,braukhoff-tang2020} to investigate the global solvability of \eqref{cns} with \eqref{bc2}, where $\lambda(x)>0$ denotes the maximal saturation of oxygen in the fluid and the influx of oxygen on the surface is proportional to the difference between the maximal and current concentrations. $\kappa(x)\geq 0$ is the absorption rate of gaseous oxygen into the fluid with $\kappa(x)>0$ on the water-air interface and $\kappa(x)=0$ on the water-solid interface. Authors in \cite{braukhoff2017,braukhoff-tang2020} proved the global existence of classical solutions in two-dimensional case and of weak solutions in three-dimensional case. The global existence and large-time behavior of weak solutions for a generalized system obtained from \eqref{cns} and \eqref{bc2} with the linear cell diffusion replaced by a nonlinear one have been derived in \cite{wu-xiang2020} for 2D case and in \cite{tian-xiang2020} for 3D case. For the fluid-free version of \eqref{cns} and \eqref{bc2}, Braukhoff-Lankeit in \cite{braukhoff-lankeit2019} showed the existence and uniqueness of nonconstant stationary solutions. Global classical solutions to the corresponding parabolic-elliptic system were later proved converging to these stationary solutions as $t$ goes to infinity (cf. \cite{fuest-lankeit-mizukami2021}). By formally passing $\kappa$ to $\infty$ in \eqref{bc2} (cf. \cite[Proposition 5.1]{braukhoff-lankeit2019}), the following boundary condition was adopted in \cite{Tuval}
\be\label{bc3}
(D_w \nabla w-w\nabla c)(x,t)\cdot \vec{\nu}=0,\quad 0=(\lambda-c(x,t)),\quad \vec{u}(x,t)=\mathbf{0},\quad x\in \partial \Om,\ \ t>0
\ee
with the constant $\lambda\geq 0$. Analytic study on \eqref{cns} and \eqref{bc3} was started with \cite{lorz2010} where the existence of local weak solutions in two-dimensional setting was proved. \cite{peng-xiang2019} showed the global existences of classical solutions around some constant equilibrium with the same boundary setting. Recently, generalized solutions for \eqref{cns} and \eqref{bc3} and radial solutions for the corresponding fluid-free system were constructed in \cite{wang-winkler-xiang2021} and \cite{lankeit-winkler2022}, respectively. For the latter case,  Carrillo-Li-Wang investigated the existence and structure of boundary layer solutions in \cite{carrillo-li-wang2021}. The above-mentioned appear to the only analytical results of \eqref{cns} with physical boundary
conditions \eqref{bc2} or \eqref{bc3} in the literature as far as we know.
\newline
~\\
\textbf{Boundary layer formation.}
It should be emphasized that one of the most important findings in the experiment conducted in \cite{Tuval} is the boundary layer formation of bacterial cells near the water-air interface and one intriguing topic in analytic studies is to uncover the underlying mechanism of this boundary layer phenomenon. However, rigourous analysis on boundary-layer formation for system \eqref{cns} with the physical boundary condition \eqref{bc2} seems unavailable in the literature so far.
The purpose of this paper is to make some progress on this issue by considering the boundary layer effects of its radially symmetric solutions. To this end,
 we rewrite \eqref{cns} with \eqref{bc2} in its radially symmetric form by assuming that the solutions $(w,c,\vec{u})$ depend only on the radial variable $r=|x|$ and time variable $t$. In a domain bounded by two concentric spheres, i.e. $\Omega=\{x=(x_1,x_2,\cdots,x_n)\in \mathbb{R}^n\,|\,\,\, 0<a<|x|<b
 \}$, \eqref{cns} and \eqref{bc2} read as
 \be\label{e1}
\left\{
\begin{array}{lll}
w_t=w_{rr}+\frac{n-1}{r}w_{r}-(wc_r)_r-\frac{n-1}{r}(wc_r),\quad (r,t)\in (a,b)\times(0,\infty),\\
c_t=\va c_{rr}+\va\frac{n-1}{r}c_r-wc,\\
w(r,0)=w_0(r),\quad c(r,0)=c_0(r),
\end{array}
\right.
\ee
with the following boundary conditions
\be\label{e2}
\left\{
\aligned
&[w_r-wc_r](a,t)=[w_r-wc_r](b,t)=0,\qquad\qquad\qquad&\\ &c_r(a,t)=-\kappa[\lambda-c(a,t)],\ \
c_r(b,t)=\kappa[\lambda-c(b,t)]\
\ & \mathrm{if}\ \va>0,\\
&[w_r-wc_r](a,t)=[w_r-wc_r](b,t)=0
\ & \mathrm{if}\ \va=0,
\endaligned
\right.
\ee
where $D_w=D=1$ have been assumed without loss of generality and the third and fourth equations in \eqref{cns} governing evolution of the fluid vanish since $\vec{u}=\mathbf{0}$ in the radially symmetric case due to the incompressible condition $\nabla\cdot\vec{u}=0$ and Dirichlet boundary condition $\vec{u}=\mathbf{0}$ on $\partial \Om$ (cf. \cite{hoff1992}). The constants $\lambda>0$, $\kappa\geq 0$ and no boundary condition is preassigned for $c$ in the case of $\va=0$ since its values at $r=a,b$ are intrinsically determined by the
second equation of (1.3) as $c|_{r=a,b}=[c_0e^{-\int_0^t w(r,\tau)d\tau}]|_{r=a,b}$.

We conclude this section by briefly recalling the previous analytic results on boundary layer formation in chemotaxis systems. The author and her collaborators investigated the boundary layer problem on a chemotaxis system with logarithmic sensitivity obtained from \eqref{cns}-\eqref{bc1} by letting $\vec{u}=p=\phi=0$ and replacing the first equation with $w_t+\nabla\cdot(w\nabla \ln c)=\Delta w$ and the boundary conditions with Dirichlet boundary condition for $w$ and Robin boundary condition for $c$, where the gradient of $w$ has been proved possessing boundary layer effects (cf. \cite{HWZ,HLWW,hou2019convergence}) as $\va$ goes to zero. Results in \cite{HWZ} were extended to the case with time-dependent boundary data (cf. \cite{peng-wang-zhao-zhu2018}). Recently, Lee-Wang-Yang investigated the existence and structure of  stationary boundary layer solutions for the fluid-free version of \eqref{cns} subject to the physical boundary condition \eqref{bc3} (cf. \cite{lee-wang-yang2020}).  Stationary boundary layer solutions for the chemotaxis system with logarithmic sensitivity subject to boundary condition \eqref{bc3} were later derived in \cite{carrillo-li-wang2021}.

The rest of the paper is organized as follows. We introduce some notations and state our main results in section 2. The proof of Theorem 2.1 and Theorem 2.2 are given in section 3 and section 4, respectively.

\section{Main Results}
For convenience we first state some notations.
~\\
\newline
\textbf{Notation.} We use $C_0$ to denote a generic constant which depends only on $a$, $b$, $n$ and use $C$ to denote a generic constant depending on $a$, $b$, $n$, $w_0$, $c_0$ and $T$.
 The generic constant $C_{\lambda}$ depends on $a$, $b$, $n$, $w_0$, $c_0$, $T$ and $\lambda$. All of $C_0$, $C$ and $C_{\lambda}$ may be different from line to line. $L^p$ with $1\leq p\leq \infty$ represents $L^p(a,b)$ and $H^m$ with $m\geq 1$ denotes $H^m(a,b)$.

The existence and uniqueness for global classical solutions of \eqref{e1}-\eqref{e2} with $\va>0$ and $\va=0$ have been proved in \cite[Theorem 1.1]{braukhoff-tang2020} and \cite[Theorem 4.1]{morales-rodrigo2010}, respectively. We cite these results in the following lemma for later use.

\begin{lemma}\label{l9} Let $\kappa\geq 0$, $\lambda>0$.
Assume $w_0$, $c_0\in H^2$ satisfy $w_0\geq 0$, $c_0>0$ on $[a,b]$. Then system \eqref{e1}-\eqref{e2} with $\va\geq 0$ admits a unique global classical solution $(w^\va,c^\va)$ depending on $\va$, fulfilling
\ben
w^\va(r,t)\geq 0, \quad c^\va(r,t)>0\ \ \ \text{in}\ (a,b)\times (0,\infty)
\enn
and
\ben
w^\va,\,c^\va\in C^{2+2\delta,1+\delta}([a,b]\times (0,T))\times C([a,b]\times [0,T))
\enn
for some $0<\delta<\frac{1}{2}$ and any $0<T<\infty$ and the equations in \eqref{e1}-\eqref{e2} are satisfied pointwise.
\end{lemma}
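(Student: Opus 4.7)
The statement is an existence-uniqueness result that the paper explicitly presents as a citation of two prior works, so my plan is to reduce the radial problem \eqref{e1}-\eqref{e2} to settings already covered by those theorems rather than to rebuild the existence theory from scratch.

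For the case $\va>0$, I would first take radial extensions $\ti{w}_0(x):=w_0(|x|)$ and $\ti{c}_0(x):=c_0(|x|)$ on the annulus $\Om=\{a<|x|<b\}\subset\mr^{n}$, check that they satisfy the $H^2(\Om)$ and sign hypotheses of \cite[Theorem 1.1]{braukhoff-tang2020}, and invoke that theorem to obtain a unique global classical solution $(\ti{w}^\va,\ti{c}^\va)$ of \eqref{cns}-\eqref{bc2} with $\vec{u}\equiv\mathbf{0}$, which trivially satisfies the incompressible Navier--Stokes equations with the Dirichlet condition on $\pt\Om$. Since the annular domain, the differential operators and the Robin/no-flux data are all invariant under rotations of $\mr^{n}$, uniqueness forces $(\ti{w}^\va,\ti{c}^\va)$ to depend only on $|x|$ and $t$; writing $\ti{w}^\va(x,t)=w^\va(|x|,t)$ and $\ti{c}^\va(x,t)=c^\va(|x|,t)$ produces the desired radial solution of \eqref{e1}-\eqref{e2}, and the positivity $c^\va>0$, the nonnegativity $w^\va\geq 0$, and the parabolic Schauder regularity $C^{2+2\de,1+\de}$ descend from the higher-dimensional statement.

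For the degenerate case $\va=0$ the second equation in \eqref{e1} collapses into the pointwise ODE $c_t+wc=0$, yielding $c(r,t)=c_0(r)\exp\bigl(-\int_0^t w(r,\tau)\,d\tau\bigr)$; in particular $c$ stays positive as long as $w$ remains bounded, and no boundary data for $c$ are needed, which is consistent with \eqref{e2}. The full system thus reduces to a single quasilinear parabolic equation for $w$ with a nonlocal drift given by this explicit representation of $c$, and the global existence, uniqueness and regularity claimed then follow from \cite[Theorem 4.1]{morales-rodrigo2010} once I verify that his hypotheses (smooth initial data, positivity of $c_0$, and compatibility with the conservative no-flux condition $[w_r-wc_r]|_{r=a,b}=0$) are met in our one-dimensional setting.

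The main obstacle I foresee is the dimension restriction built into \cite[Theorem 1.1]{braukhoff-tang2020}, which was stated only for $n=2,3$: extending it to arbitrary $n\geq 2$ through radial reduction requires that the weighted factors $r^{n-1}$ in \eqref{e1} not destroy the energy estimates or obstruct the maximum-principle arguments used there. The cleanest workaround is to work directly on the one-dimensional interval $(a,b)$ and establish (i) the uniform bound $\|c^\va(\cdot,t)\|_{L^\infty}\leq\max\{\|c_0\|_{L^\infty},\lambda\}$ via the maximum principle applied to the second equation of \eqref{e1} together with the Robin condition in \eqref{e2}; (ii) mass conservation $\int_a^b r^{n-1}w^\va(r,t)\,dr=\int_a^b r^{n-1}w_0(r)\,dr$ coming from the conservative no-flux boundary condition; and (iii) a closed $H^2$ energy estimate for $(w^\va,c^\va)$ built on (i)--(ii) using the radial Laplacian structure, which rules out finite-time blow-up uniformly in $\va\geq 0$ and in $n$, and yields the stated Hölder regularity by bootstrapping with one-dimensional Schauder theory.
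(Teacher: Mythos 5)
The paper offers no proof of this lemma at all: it is stated purely as a citation of \cite[Theorem 1.1]{braukhoff-tang2020} (for $\va>0$) and \cite[Theorem 4.1]{morales-rodrigo2010} (for $\va=0$), so your plan of reducing the radial problem to those two results is essentially the same approach the paper takes, just spelled out in more detail. Your observation that the dimension restriction $n=2,3$ in Braukhoff--Tang does not literally cover the general-$n$ radial setting is a legitimate point that the paper glosses over, and your proposed workaround (direct one-dimensional maximum-principle and energy arguments on $(a,b)$ with the weight $r^{n-1}$) is the natural fix, consistent with the estimates the paper itself develops in Section 3.
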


Next we recall the conventional definition of boundary layer (BL)-thickness (cf. \cite{frid-shelukhin99,frid-shelukhin20}).
\begin{defin}\label{D1}
Let $(w^\va, c^\va)$ and $(w^0, c^0)$ be the solution of \eqref{e1}-\eqref{e2} with $\va>0$ and $\va=0$, respectively. If there is a non-negative function $\delta=\delta(\va)$ satisfying $\delta(\va)\downarrow 0$ as $\va \downarrow 0$ such that
\ben
\begin{aligned}
&\lim_{\va\rightarrow 0}\|w^\va-w^0\|_{L^{\infty}(0,T;C[a,b])}=0,\ \quad
\lim_{\va\rightarrow 0}\|c^\va-c^0\|_{L^{\infty}(0,T;C[a,b])}=0,\\
&\lim_{\va\rightarrow 0}\|w_r^{\va}-w_r^0\|_{L^{\infty}(0,T;C[a+\delta,b-\delta])}=0,\ \quad
\lim_{\va\rightarrow 0}\|c_r^{\va}-c_r^0\|_{L^{\infty}(0,T;C[a+\delta,b-\delta])}=0,\\
&\liminf_{\va\rightarrow 0}\|w_r^{\va}-w_r^0\|_{L^{\infty}(0,T;C[a,b])}>0,\ \quad
\liminf_{\va\rightarrow 0}\|c_r^{\va}-c_r^0\|_{L^{\infty}(0,T;C[a,b])}>0,\\
\end{aligned}
\enn
we say that the gradients of solution $(w^\va,c^\va)$ to problem \eqref{e1}-\eqref{e2} have boundary layer effects as $\va \to 0$ and $\delta(\varepsilon)$ is called a BL-thickness.
%where $(p^{\va},q^{\va})$  are the solutions of the problem \eqref{e1} with $\varepsilon \geq 0$ subject to the %initial-boundary condition (\ref{e2}).
\end{defin}

\begin{rem}\label{rod}
\em{
As mentioned in \cite{frid-shelukhin99,frid-shelukhin20}, the above definition does not determine the BL-thickness uniquely since any function $\delta_*(\va)$ satisfying $\delta_*(\va)\to 0$ as $\va\to 0$ and the inequality $\delta_*(\va)>\delta(\va)$ is also a BL-thickness. }
\end{rem}
We are now in a position to state our main results on the boundary layer problem. To this end we need two uniform-in-$\va$ estimate for the solutions of \eqref{e1}-\eqref{e2} with $\va>0$ corresponding to the case of $\kappa>0$ and $\kappa=0$, respectively, which are the key to show the existence of boundary layer solutions in case of $\kappa>0$ and the convergence of solutions in case of $\kappa=0$.
\begin{theorem}\label{t1}
Let $\lambda>0$, $\ka\geq0$ and $0<T<\infty$.
  Assume $w_0$, $c_0\in H^2$ with $w_0\geq 0$ and $c_0>0$ on $[a,b]$ satisfy the following compatibility conditions:
 \be\label{a00}
 \left\{
\begin{array}{lll}
 w_{0r}(a)-w_{0}(a)c_{0r}(a)=w_{0r}(b)-w_{0}(b)c_{0r}(b)=0,
 \\
 c_{0r}(a)=-\kappa [\lambda-c_{0}(a)],
 \quad
  c_{0r}(b)=\kappa [\lambda-c_{0}(b)].
 \end{array}
 \right.
 \ee
 Let $(w^\va,c^\va)$ with $0\leq \va<1$ be the unique solutions of \eqref{e1}-\eqref{e2}, derived in Lemma \ref{l9}. Then the following holds true:\\
(i) If $\kappa>0$. There is a constant $C_{\lambda}$ depending on $a$, $b$, $n$, $\|w_0\|_{H^2}$, $\|c_0\|_{H^2}$, $T$ and $\lambda$ such that
\be\label{a0}
\begin{split}
\|w^\va&\|_{L^\infty(0,T;H^1)}^2+\|c^\va\|_{L^\infty(0,T;H^1)}^2
+\|w^\va_t\|_{L^2(0,T;H^1)}^2
+\|c^\va_t\|_{L^2(0,T;H^1)}^2\\
&+
\va^{\frac{1}{2}}\|c^\va_{rr}\|_{L^\infty(0,T;L^2)}^2
+\va^{\frac{3}{2}}\|c^\va_{rrr}\|_{L^2(0,T;L^2)}^2
\leq C_{\lambda}\big(1+\exp\{\exp\{\exp\{C_{\lambda}\ka^2\}\}\}\big)
\end{split}
\ee
for $0<\va<1$.
 \newline
 (ii) If $\kappa=0$. There is a constant $C$ depending on $a$, $b$, $n$, $\|w_0\|_{H^2}$, $\|c_0\|_{H^2}$ and $T$ such that
\be\label{c19}
\begin{split}
\|w^\va&\|_{L^\infty(0,T;H^1)}^2+\|c^\va\|_{L^\infty(0,T;H^1)}^2
+\|w^\va_t\|_{L^2(0,T;H^1)}^2
+\|c^\va_t\|_{L^2(0,T;H^1)}^2\\
&+
\|c^\va_{rr}\|_{L^\infty(0,T;L^2)}^2
+\va\|c^\va_{rrr}\|_{L^2(0,T;L^2)}^2
\leq C
\end{split}
\ee
for $0<\va<1$.
 \newline
(iii) Assume further $c_0\in H^3$. Then there exists a constant $C$ depending on $a,b$, $n$, $\|w_0\|_{H^2}$, $\|c_0\|_{H^3}$ and $T$ such that
\be\label{a000}
\|w^0\|_{L^\infty (0,T;H^2)}+\|c^0\|_{L^\infty (0,T;H^3)}
+\|c^0_t\|_{L^\infty (0,T;H^2)}
+\|w^0\|_{L^2 (0,T;H^3)}^2+\|c^0_t\|_{L^2 (0,T;H^3)}\leq C.
\ee
\end{theorem}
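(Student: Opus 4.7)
The plan is to establish (2.1), (2.2), and (2.3) via a hierarchical energy argument in which one constantly tracks the boundary contributions forced by the Robin relation $c_r(a,t)=-\kappa(\lambda-c(a,t))$, $c_r(b,t)=\kappa(\lambda-c(b,t))$. At the base level I would record that testing the $w$-equation by $r^{n-1}$ and integrating on $(a,b)$ gives, thanks to the no-flux condition, the mass conservation $\int_a^b r^{n-1}w^\varepsilon(r,t)\,dr=\int_a^b r^{n-1}w_0\,dr$, while the parabolic maximum principle applied to the $c$-equation yields $0\le c^\varepsilon\le\max\{\|c_0\|_{L^\infty},\lambda\}$ uniformly in $\varepsilon$. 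Together with the nonnegativity from Lemma \ref{l9}, these bounds fix the $L^\infty$ normalisation without invoking any Gr\"onwall loop.

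I would then climb the Sobolev ladder for $c^\varepsilon$. Testing the $c$-equation by $c^\varepsilon$ produces the boundary term $\varepsilon\kappa c^\varepsilon(\lambda-c^\varepsilon)|_{r=a,b}$, whose quadratic part has a good sign and whose linear part $\varepsilon\kappa\lambda c^\varepsilon|_{r=a,b}$ is absorbed by the trace inequality and Young. Testing by $-c^\varepsilon_{rr}$ leaves the uncontrolled boundary term $\varepsilon c^\varepsilon_r c^\varepsilon_{rr}|_{r=a,b}$; to handle it I would read $\varepsilon c^\varepsilon_{rr}|_{r=a,b}$ from the equation itself, rewriting it as $c^\varepsilon_t+w^\varepsilon c^\varepsilon-\varepsilon\tfrac{n-1}{r}c^\varepsilon_r$ at the endpoints and then using the Robin relation on the remaining $c^\varepsilon_r$ factor. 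After time integration and Cauchy--Schwarz this closes at the $H^1$ level and introduces the first $\exp\{C\kappa^2 t\}$ factor. An analogous argument, obtained by differentiating the $c$-equation in $r$ and testing against $c^\varepsilon_{rrr}$, yields the weighted bounds $\varepsilon^{1/2}\|c^\varepsilon_{rr}\|_{L^\infty L^2}^2$ and $\varepsilon^{3/2}\|c^\varepsilon_{rrr}\|_{L^2 L^2}^2$ appearing in (2.1); the peculiar $\varepsilon$-weights are precisely the ones needed to absorb the boundary contributions, since the endpoint values of $c^\varepsilon_{rr}$ and $c^\varepsilon_{rrr}$, recovered through successive differentiation of the equation, carry explicit negative powers of $\varepsilon$. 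For $w^\varepsilon$ I would test against $-w^\varepsilon_{rr}$: the no-flux condition $(w_r-wc_r)|_{r=a,b}=0$ kills the boundary terms exactly, the chemotaxis cross term $\int w^\varepsilon_r(w^\varepsilon c^\varepsilon_r)_r\,dr$ is controlled via $\|c^\varepsilon_r\|_{L^\infty}\lesssim\|c^\varepsilon_r\|_{L^2}^{1/2}\|c^\varepsilon_{rr}\|_{L^2}^{1/2}$ (Gagliardo--Nirenberg plus the $c$-bounds just obtained), and Gr\"onwall closes the $H^1$-energy for $w$. Differentiating the system in $t$ and testing by $(w^\varepsilon_t,c^\varepsilon_t)$ finally delivers the $L^2H^1$-bounds of the time derivatives.

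The triple-exponential dependence on $\kappa$ in (2.1) is the cumulative price of three nested Gr\"onwall loops in this bootstrap: the $c$-energy carries $\exp\{C\kappa^2 t\}$ from the Robin trace, this is fed through the chemotaxis term into the $w$-energy to give $\exp\{\exp\{C\kappa^2 t\}\}$, and a last return to the higher-order $c$-estimate using this $w$-bound as forcing produces the outermost exponential. When $\kappa=0$ the Robin condition degenerates to homogeneous Neumann, every bad boundary term vanishes identically, the $\varepsilon^{1/2}$ and $\varepsilon^{3/2}$ weights become unnecessary, and all Gr\"onwall constants depend only on the data, which collapses the whole chain to the clean bound (2.2). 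For part (iii) the limit $\varepsilon=0$ reduces the second equation of (1.3) to the ODE $c^0_t=-w^0 c^0$, solved explicitly by $c^0(r,t)=c_0(r)\exp\{-\int_0^t w^0(r,\tau)\,d\tau\}$; differentiating up to three times in $r$ and combining with $c_0\in H^3$ and the bounds of (2.2) yields the $L^\infty H^3$ and $L^2 H^3$ controls on $c^0$ and $c^0_t$, while the $w$-equation then becomes a uniformly parabolic scalar equation for $w^0$ with coefficients in sufficiently high norms to furnish the remaining $L^\infty H^2$ and $L^2 H^3$ bounds on $w^0$.

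The hardest part will be the weighted $H^2$ and $H^3$ estimates for $c^\varepsilon$ in the regime $\kappa>0$: none of the natural boundary terms vanish and the only way to process them is to reinject the equation, which brings in powers of $\varepsilon^{-1}$ that must be matched against the weights $\varepsilon^{1/2}$ and $\varepsilon^{3/2}$. A secondary difficulty is keeping the cumulative constant in (2.1) honest, since a less careful use of Gr\"onwall at any stage of the cascade would push the bound to a quadruple exponential or worse. The $\tfrac{n-1}{r}$ convective-like terms cause no trouble because $r\in[a,b]$ is bounded away from zero.
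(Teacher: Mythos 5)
Your base-level observations (mass conservation, maximum principle) and your treatment of part (iii) via the explicit formula $c^0=c_0e^{-\int_0^t w^0}$ match the paper. But the core of your plan for \eqref{a0} --- an $H^1$ ``Sobolev ladder'' for $c^\va$ obtained by testing the $c$-equation with $c^\va$ and $-c^\va_{rr}$, followed by an $H^1$ estimate for $w^\va$ that controls the chemotaxis term through $\|c^\va_r\|_{L^\infty}\lesssim\|c^\va_r\|_{L^2}^{1/2}\|c^\va_{rr}\|_{L^2}^{1/2}$ --- has a genuine gap and is not how the paper proceeds. Testing the $c$-equation by $-c^\va_{rr}$ produces the consumption contribution $\int w^\va c^\va\,c^\va_{rr}\,dr$, which can only be absorbed by the diffusion at the cost of a factor $\va^{-1}$, or (after integrating by parts) requires an a priori bound on $\|w^\va_r\|_{L^2}$ that is not yet available at that stage; either way the $H^1$ estimate for $c^\va$ does not close uniformly in $\va$. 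Worse, the quantity $\|c^\va_{rr}\|_{L^\infty(0,T;L^2)}$ is \emph{not} uniformly bounded when $\ka>0$: the theorem itself only asserts $\va^{1/2}\|c^\va_{rr}\|^2_{L^\infty(0,T;L^2)}\leq C$, i.e.\ $\|c^\va_{rr}\|_{L^2}=O(\va^{-1/4})$, so your interpolation bound for $\|c^\va_r\|_{L^\infty}$ degenerates as $\va\to0$ and cannot close the $w$-estimate uniformly.

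The missing idea is the coupled entropy structure of Lemmas \ref{l1}--\ref{l10}: one tests the $w$-equation with $r^{n-1}\log w^\va$ and works with the Fisher-information functional $\int_a^b r^{n-1}(\partial_r\sqrt{c^\va})^2dr$ rather than $\|c^\va_r\|_{L^2}^2$, so that the two dangerous cross terms $\pm\int_a^b r^{n-1}w^\va_r c^\va_r\,dr$ cancel exactly upon addition, while the dissipation appears in the $\va$-degenerate but signed form $\va\int r^{n-1}c^\va[(\log c^\va)_{rr}]^2dr$; the Robin boundary terms are then organized into the Lyapunov contributions $\ka\frac{d}{dt}[\lambda\log(\lambda/c^\va)-\lambda+c^\va]|_{r=a,b}$ by testing the $c$-equation with $\ka(1-\lambda/c^\va)$ at the endpoints. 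Only after this does the paper obtain $\|c^\va_r\|_{L^\infty(0,T;L^2)}$ (via $\|c^\va\|_{L^\infty}$ times the Fisher information), and it closes the $w$-estimates by testing with $r^{n-1}w^\va$ and $r^{n-1}w^\va_t$, pairing $\|c^\va_r\|_{L^2}$ with $\|w^\va\|_{L^\infty}$ interpolated in terms of $w^\va$ itself --- never invoking $\|c^\va_r\|_{L^\infty}$. Your heuristic for the $\va^{1/2}$, $\va^{3/2}$ weights and for the triple exponential is qualitatively reasonable, but without the entropy cancellation the first rung of your ladder already fails, so the proposal as written does not yield the $\va$-uniform estimates \eqref{a0} (and, for the same reason concerning the consumption term, would also need repair in the $\ka=0$ case \eqref{c19}).
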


Then the main results on existence of boundary layer solutions for system \eqref{e1}-\eqref{e2} are stated in the following Theorem.

\begin{theorem}\label{t2}
Let $\lambda>0$, $\ka\geq0$ and $0<T<\infty$.
  Assume $w_0\in H^2$, $c_0\in H^3$ satisfy $w_0\geq 0$, $c_0>0$ on $[a,b]$ and the compatibility conditions in \eqref{a00}.
 Let $(w^\va,c^\va)$ with $0\leq \va<1$ be the unique solutions of \eqref{e1}-\eqref{e2}, derived in Lemma \ref{l9}. Then the following holds true:\\
(i) If $\ka>0$. Then any non-negative function $\delta(\va)$ satisfying
\ben
\delta(\va)\rightarrow 0\,\,\, {\rm and}\,\,\, \va^{1/2}/\delta(\va)\rightarrow 0,\,\,\, {\rm as}\,\, \va\rightarrow 0
\enn
is a BL-thickness of \eqref{e1}-\eqref{e2} such that
\be\label{b1}
\|w^\va-w^0\|_{L^{\infty}(0,T;C[a,b])}
+\|c^\va-c^0\|_{L^{\infty}(0,T;C[a,b])}\leq C_{\lambda,\ka} \va^{\frac{1}{4}}
\ee
and
\be\label{b2}
\|w_r^{\va}-w_r^{0}\|_{L^{\infty}(0,T;C[a+\delta,b-\delta])}
+\|c_r^{\va}-c_r^{0}\|_{L^{\infty}(0,T;C[a+\delta,b-\delta])}
<C_{\lambda,\ka}\delta^{-1/2}\va^{1/4},
\ee
where the constant $C_{\lambda,\ka}$ depends on $a$, $b$, $n$, $\|w_0\|_{H^2}$, $\|c_0\|_{H^3}$, $T$, $\lambda$ and $\kappa$.
Moreover,
\be\label{b3}
\liminf_{\va\rightarrow 0}\|w_r^{\va}-w_r^{0}\|_{L^{\infty}(0,T;C[a,b])}>0,
\quad
\liminf_{\va\rightarrow 0}\|c_r^{\va}-c_r^{0}\|_{L^{\infty}(0,T;C[a,b])}>0
\ee
if and only if
\be\label{sc1}
\begin{split}
w^0(a,t)>0\quad \text{or}\quad w^0(b,t)>0 \ \ \ \text{for} \ \text{some}\  t\in [0,T].
\end{split}
\ee
 That is, the gradients of solutions $(w^\va,c^\va)$ have boundary layer effects as $\varepsilon \to 0$ iff (\ref{sc1}) holds.
\newline
(ii) If $\ka=0$. Then there exists a constant $C$ depending on $a$, $b$, $n$, $\|w_0\|_{H^2}$, $\|c_0\|_{H^3}$ and $T$ such that
\be\label{b4}
\|w^\va-w^0\|_{L^\infty(0,T;H^2)}^2+\|c^\va-c^0\|_{L^\infty(0,T;H^2)}^2
\leq C\va^2.
\ee
\end{theorem}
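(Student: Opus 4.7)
The plan is to work with the difference $(\phi, \psi) := (w^\va - w^0, c^\va - c^0)$ and transfer $L^2$-type convergence rates to $L^\infty$ via one-dimensional Sobolev interpolation, using the uniform bounds of Theorem \ref{t1} as the a priori input. Subtracting the systems, $\phi$ satisfies $\phi_t - \phi_{rr} - \frac{n-1}{r}\phi_r + (\phi c^\va_r + w^0 \psi_r)_r + \frac{n-1}{r}(\phi c^\va_r + w^0 \psi_r) = 0$, and $\psi$ satisfies a parabolic equation with a forcing $\va c^0_{rr} + \va\frac{n-1}{r}c^0_r$ of size $O(\va)$ in $L^2$ by (\ref{a000}). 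The crucial structural fact is that the no-flux condition $w_r - w c_r|_{r=a,b} = 0$ holds for both $\va > 0$ and $\va = 0$, so the combination $\phi_r - \phi c^\va_r - w^0 \psi_r$ vanishes on the boundary and all boundary contributions in the $\phi$-energy cancel identically; by contrast $\psi_r(a,t) = -\ka(\lambda - c^\va(a,t)) - c^0_r(a,t)$ is generically $O(1)$ in $\va$ and is the source of the boundary layer in case (i).

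For part (i) I test the $\phi$-equation by $\phi$ and the $\psi$-equation by $\psi$ and sum. The $\psi$-boundary contribution is $\va\psi_r\psi|_a^b = O(\va\|\psi\|_{L^\infty})$, and the $\va c^0_{rr}$ forcing contributes $O(\va)$ in $L^2$. The main technical work is in the cross term $\int w^0 \psi_r \phi_r\, dr$, which I would handle by Gagliardo--Nirenberg interpolation together with Young's inequality, using the uniform $H^1$-bounds of Theorem \ref{t1} and, if necessary, an integration by parts followed by the $\va$-weighted $L^\infty L^2$ bound on $c^\va_{rr}$ to control the resulting $\psi_{rr}$ terms. Combined with $\|\phi\|_{L^\infty}^2 \le C\|\phi\|_{L^2}\|\phi\|_{H^1}$ and the uniform $H^1$ bounds on $\phi, \psi$, a Gronwall argument yields $\|\phi(t)\|_{L^2}^2 + \|\psi(t)\|_{L^2}^2 \le C_{\lambda,\ka}\va$ on $[0,T]$, and one further Sobolev interpolation delivers the $\va^{1/4}$ rate in (\ref{b1}). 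Then for (\ref{b2}), interpolating the new $L^2$ bound against $\|\phi\|_{H^2}^2 \le C$ gives $\|\phi_r\|_{L^2}^2 \le C\va^{1/2}$, and a cut-off $\eta \in C^\infty_c(a,b)$ equal to $1$ on $[a+\delta,b-\delta]$ with $|\eta'| \le C/\delta$, combined with the compact-support embedding $\|\eta \phi_r\|_{L^\infty}^2 \le C\|\eta\phi_r\|_{L^2}\|(\eta\phi_r)_r\|_{L^2}$, produces the $\delta^{-1/2}\va^{1/4}$ rate.

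The iff characterization (\ref{b3}) follows from the fact that the Robin condition passes to the limit, $c^\va_r(a,t) = -\ka(\lambda - c^\va(a,t)) \to -\ka(\lambda - c^0(a,t))$ uniformly, so $\|c^\va_r - c^0_r\|_{L^\infty_t C[a,b]}$ is bounded below by $\sup_{t\in[0,T]}|c^0_r(a,t) + \ka(\lambda - c^0(a,t))|$ up to an $o(1)$ error (and similarly at $r=b$). Differentiating the vanishing-mismatch identity $c^0_r(a,t) + \ka(\lambda - c^0(a,t)) \equiv 0$ in $t$ and eliminating $c^0_{rt}$ using both $c^0_t = -w^0 c^0$ and the no-flux BC $w^0_r = w^0 c^0_r$ produces an algebraic relation that forces $w^0(a,\cdot) \equiv 0$ on $[0,T]$; combined with the analogous argument at $r = b$ and the identity $w^\va_r = w^\va c^\va_r$ on the boundary (which handles $w^\va_r - w^0_r$), this yields the iff.

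For part (ii) with $\ka = 0$, the Robin condition degenerates to Neumann $c^\va_r|_{r=a,b} = 0$, which by compatibility and ODE-in-time propagation at the boundary matches the $\va = 0$ trace $c^0_r|_{r=a,b} = 0$; thus all boundary contributions in the energy identities vanish. Using the improved estimate (\ref{c19}) with $\|c^\va_{rr}\|_{L^\infty L^2} \le C$ (no $\va$-degeneration) and the $O(\va)$ forcing, I would repeat the energy argument at the $H^2$ level against the forcings $\va c^0_{rr}, \va c^0_{rrr}$ controlled by (\ref{a000}), and conclude via Gronwall that $\|\phi\|_{H^2}^2 + \|\psi\|_{H^2}^2 \le C\va^2$, establishing (\ref{b4}). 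The main obstacle is part (i): the nonlinear cross terms interact with the $\va$-degenerate uniform bound on $c^\va_{rr}$, and the $L^2$-energy identity must be closed without losing the sharp $\va^{1/4}$ rate.
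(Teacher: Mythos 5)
Your overall skeleton (energy estimates for the difference, Sobolev interpolation, boundary-trace analysis for the iff) matches the paper's strategy, but several of the quantitative claims you rely on are not justified, and two of them break the argument. First, the rate $\|\phi\|_{L^2}^2+\|\psi\|_{L^2}^2\le C\va$ that you feed into the interpolation is not what the energy method gives in the general case: testing the $\phi$-equation by $\phi$ forces you to control $\|\ti{c}_r\|_{L^2}^2$ (through the cross term), and because $\ti{c}_r$ is $O(1)$ at the boundary (the mismatch $c^0_r(a,t)+\ka(\lambda-c^0(a,t))$ does not vanish in general), the sharp rate for $\|\ti{c}_r\|_{L^2}^2$ is $\va^{1/2}$, not $\va$ --- this is exactly what the paper proves in \eqref{a60}, with the boundary term $\va[\ti{c}_{rr}\ti{c}_r]|_a^b$ as the bottleneck. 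With $\|\phi\|_{L^2}^2\lesssim\va^{1/2}$ your interpolation $\|\phi\|_{L^\infty}\le C\|\phi\|_{L^2}^{1/2}\|\phi\|_{H^1}^{1/2}$ only yields $\va^{1/8}$, not the $\va^{1/4}$ of \eqref{b1}; the paper instead proves $\|\ti{w}_r\|_{L^\infty(0,T;L^2)}^2\lesssim\va^{1/2}$ via the time-differentiated energy estimates (its Lemma on $\ti{w}_t,\ti{w}_{rt},\ti{c}_{rt}$), which your proposal omits, and then uses $\|\ti w\|_{L^\infty}\le C\|\ti w\|_{H^1}$. Second, your route to \eqref{b2} assumes a uniform bound $\|\phi\|_{H^2}\le C$, which is not available: Theorem \ref{t1} only gives $\va^{1/2}\|c^\va_{rr}\|_{L^\infty(0,T;L^2)}^2\le C$, i.e.\ $\|c^\va_{rr}\|_{L^2}$ may blow up like $\va^{-1/4}$ (this is the boundary layer itself), and there is no uniform bound on $w^\va_{rr}$ at all. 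The paper's replacement is the degenerate-weight estimate $\sup_t\int_a^b(r-a)^2(r-b)^2(\ti{c}_{rr}^2+\ti{w}_{rr}^2)\,dr\le C\va^{1/2}$ (Lemma \ref{l8}), from which the interior bound $\delta^{-1}\va^{1/4}$ on $\|\ti w_{rr}\|_{L^2[a+\delta,b-\delta]}$ and then \eqref{b2} follow; this weighted estimate is the key device your cut-off argument cannot substitute for without the missing $H^2$ bound.

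On the iff: your differentiation of the vanishing-mismatch identity is in the same spirit as the paper's Lemma \ref{l11}, but (a) the algebra produces a second branch ($c^0(a,t)$ equal to the root of $(c^0)^2-\la c^0-\la=0$) that you must rule out, and (b) for the $w_r$ lower bound via $w^\va_r=w^\va c^\va_r$ you need a single time $t_1$ at which \emph{simultaneously} the mismatch is nonzero and $w^0(a,t_1)>0$, which is precisely what Lemma \ref{l11} arranges. More seriously, your argument only addresses the ``if'' direction: when $w^0$ vanishes identically on the boundary your lower bound degenerates to $0$, which does not prove that the gradients actually converge uniformly up to the boundary. That direction requires showing $\|\ti{c}_r\|_{L^\infty(0,T;C[a,b])}\to0$, which the paper obtains from the improved $O(\va)$ rates and the unweighted bounds on $\|\ti{c}_{rr}\|_{L^2},\|\ti{w}_{rr}\|_{L^2}$ valid under the mismatch conditions \eqref{a80}--\eqref{a81} (Lemmas \ref{l0}, \ref{l6} and the $\ka>0$ second-order lemma); none of this machinery appears in your proposal. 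Part (ii) of your sketch is essentially the paper's argument and is fine in outline.
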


%\begin{rem}
%\end{rem}
\section{Proof of Theorem \ref{t1}}
In this section we are devoted to deriving the uniform estimates \eqref{a0} and \eqref{c19} with $0<\va<1$ and the regularities in \eqref{a000} for solutions in the non-diffusive case with $\va=0$.
Except the existence for solutions of \eqref{e1}-\eqref{e2} with $\va>0$, authors in \cite{braukhoff-tang2020} also established some energy estimates for the solutions. However those estimates in \cite{braukhoff-tang2020} are dependent of $\va$ or $\kappa$, which can not be sued in the derivation of the uniform-in-$\va$ estimates \eqref{a0} and \eqref{c19}.  The proof of \eqref{a0} and \eqref{c19} is given in Lemma \ref{l1} - Lemma \ref{l4} and the proof of \eqref{a000} is given in Lemma \ref{l5}. Based on these lemmas, we shall prove Theorem \ref{t1} at the end of this section.

\begin{lemma}\label{l1} Suppose the assumptions in Theorem \ref{t1} hold. Let $(w^\va,c^\va)$ with $0<\va<1$ be the global solutions of \eqref{e1}-\eqref{e2}, derived in Lemma \ref{l9}. \\
(i) If $\kappa>0$. Then
\be\label{a1}
\sup_{t>0}\|c^\va(t)\|_{L^\infty}\leq \max \{\|c_0\|_{L^\infty}, \lambda\},\qquad \sup_{t>0}\int_a^b r^{n-1}w^\va(t)dr=\int_a^b r^{n-1}w_0dr
\ee
and
\be\label{a2}
\begin{split}
\frac{d}{dt}&\int_a^b r^{n-1}(w^\va\log w^\va-w^\va+1)dr+
2\frac{d}{dt}\int_a^b r^{n-1} (\partial_r\sqrt{c^\va})^2dr\\
&+\ka b^{n-1}\frac{d}{dt} \big[\lambda\log{\frac{\lambda}{c^\va(b,t)}}-\lambda+c^\va(b,t)\big]
+\ka a^{n-1}\frac{d}{dt} \big[\lambda\log{\frac{\lambda}{c^\va(a,t)}}-\lambda+c^\va(a,t)\big]\\
&+4\int_a^b r^{n-1}(\partial_r \sqrt{w^\va})^2dr+\va\int_a^b r^{n-1}c^{\va} \left[(\log c^\va)_{rr}
\right]^2dr\\
&
+\frac{\va\kappa\lambda}{2}\left\{b^{n-1}\frac{[c^\va_r(b,t)]^2}{[c^\va(b,t)]^2}
+a^{n-1}\frac{[c^\va_r(a,t)]^2}{[c^\va(a,t)]^2}
\right\}\\
\leq&
\va(n-1)a^{n-2}\frac{[c^\va_r(a,t)]^2}{c^\va(a,t)}
+\frac{\va\kappa}{2}\left\{b^{n-1}\frac{[c^\va_r(b,t)]^2}{c^\va(b,t)}
+a^{n-1}\frac{[c^\va_r(a,t)]^2}{c^\va(a,t)}
\right\}\\
&+\ka b^{n-1}[\lambda-c^\va(b,t)]w^\va(b,t)
+\ka a^{n-1}[\lambda-c^\va(a,t)]w^\va(a,t)
\end{split}
\ee
for each $t>0$.\\
(ii) If $\kappa=0$. Then
\be\label{b25}
\sup_{t>0}\|c^\va(t)\|_{L^\infty}\leq \|c_0\|_{L^\infty},\qquad \sup_{t>0}\int_a^b r^{n-1}w^\va(t)dr=\int_a^b r^{n-1}w_0dr.
\ee
\end{lemma}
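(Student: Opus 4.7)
The $L^\infty$ bounds on $c^\va$ in \eqref{a1} and \eqref{b25} are derived by a Stampacchia-type test. Setting $M:=\max\{\|c_0\|_{L^\infty},\la\}$ (respectively $M:=\|c_0\|_{L^\infty}$ if $\ka=0$), I multiply the radial $c$-equation by $r^{n-1}(c^\va-M)_+$ and rewrite the diffusive part as $[r^{n-1}c^\va_r]_r(c^\va-M)_+$. After one integration by parts, the endpoint contribution $\pm\va r^{n-1}c^\va_r(c^\va-M)_+\bigr|_{r=a,b}$ is nonpositive whenever $c^\va$ exceeds $\la$ at that endpoint by the Robin conditions $c^\va_r(a,t)=-\ka[\la-c^\va(a,t)]$, $c^\va_r(b,t)=\ka[\la-c^\va(b,t)]$ (in the Neumann case $\ka=0$ they vanish). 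Together with the two nonpositive bulk terms (one from the integration by parts on the diffusion, the other from $-w^\va c^\va\le 0$), this yields $\frac{d}{dt}\int r^{n-1}[(c^\va-M)_+]^2\,dr\le 0$, whence $c^\va\le M$ from the initial data. The mass identities follow by rewriting the $w$-equation as $r^{n-1}w^\va_t=[r^{n-1}(w^\va_r-w^\va c^\va_r)]_r$, integrating in $r\in(a,b)$, and using the no-flux condition in \eqref{e2}.

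For the entropy inequality \eqref{a2} I combine two calculations. Testing the $w$-equation against $r^{n-1}\log w^\va$ and rewriting the bulk as $\log w^\va\cdot[r^{n-1}(w^\va_r-w^\va c^\va_r)]_r$, one integration by parts (with endpoint contribution vanishing by no-flux) gives
\[
\frac{d}{dt}\int_a^b r^{n-1}(w^\va\log w^\va-w^\va+1)\,dr+4\int_a^b r^{n-1}(\pt_r\sq{w^\va})^2\,dr=\int_a^b r^{n-1}w^\va_r c^\va_r\,dr.
\]
For the Fisher-information quantity $\tilde I(t):=\int_a^b r^{n-1}(c^\va_r)^2/(2c^\va)\,dr=2\int_a^b r^{n-1}(\pt_r\sq{c^\va})^2\,dr$, differentiating and integrating by parts once in $r$ gives
\[
\frac{d\tilde I}{dt}=\Bigl[r^{n-1}\frac{c^\va_r c^\va_t}{c^\va}\Bigr]_{r=a}^{r=b}-\int_a^b\!\Bigl[\frac{(n-1)r^{n-2}c^\va_r}{c^\va}+\frac{r^{n-1}c^\va_{rr}}{c^\va}-\frac{r^{n-1}(c^\va_r)^2}{2(c^\va)^2}\Bigr]c^\va_t\,dr.
\]
Substituting $c^\va_t=\va(c^\va_{rr}+\frac{n-1}{r}c^\va_r)-w^\va c^\va$ and completing squares, the $\va$-bulk contribution regroups into the Bakry-Emery-type dissipation $-\va\int r^{n-1}c^\va[(\log c^\va)_{rr}]^2\,dr$ (modulo a geometric correction from the radial weight), while the $-w^\va c^\va$ part yields $-\int r^{n-1}w^\va_r c^\va_r\,dr$ together with the nonpositive $-\tfrac12\int r^{n-1}w^\va(c^\va_r)^2/c^\va\,dr$, which may be dropped to produce the stated inequality. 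The first of these two terms exactly cancels the cross-term from the $w$-entropy identity above.

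The endpoint contribution $[r^{n-1}c^\va_r c^\va_t/c^\va]_{r=a}^{r=b}$ is processed using the Robin conditions together with the pointwise identity $\frac{d}{dt}[\la\log(\la/c^\va)-\la+c^\va]=(1-\la/c^\va)c^\va_t$ applied at $r=a,b$. This identification produces the time derivatives of the boundary relative entropies $\ka r^{n-1}[\la\log(\la/c^\va)-\la+c^\va]$ appearing on the left of \eqref{a2}; isolating the quadratic-in-$c^\va_r$ portion of the remainder via the Robin relation then delivers the positive boundary dissipation $\frac{\va\ka\la}{2}r^{n-1}(c^\va_r)^2/(c^\va)^2$ on the left, together with the terms $\frac{\va\ka}{2}r^{n-1}(c^\va_r)^2/c^\va$ and $\ka r^{n-1}[\la-c^\va]w^\va$ on the right. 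The uncompensated $\va(n-1)a^{n-2}(c^\va_r(a,t))^2/c^\va(a,t)$ on the right of \eqref{a2} is the artefact of the geometric factor $(n-1)/r$ at the inner sphere $r=a$, which (unlike at $r=b$) does not combine with the outward-normal sign to form a dissipative contribution. The main technical obstacle is precisely this final bookkeeping: opposite normal signs at the two endpoints, the non-vanishing factor $(n-1)/r$ at the inner sphere, and the Robin datum interact delicately, so matching the exact right-hand side of \eqref{a2} requires careful tracking of every endpoint term. All pointwise manipulations are justified by the $C^{2+2\delta,1+\delta}$ regularity from Lemma \ref{l9}.
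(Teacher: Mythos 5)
Your plan is correct and follows essentially the same route as the paper: the $w$-entropy identity tested with $r^{n-1}\log w^\va$, the evolution of the Fisher information of $c^\va$ with its regrouping into the dissipation $\va\int r^{n-1}c^\va[(\log c^\va)_{rr}]^2\,dr$, cancellation of the cross term $\int_a^b r^{n-1}w^\va_r c^\va_r\,dr$, and identification of the boundary relative entropies and the residual quadratic boundary terms through the Robin conditions, including the sign asymmetry between $r=a$ and $r=b$ that leaves $\va(n-1)a^{n-2}[c^\va_r(a,t)]^2/c^\va(a,t)$ uncompensated on the right of \eqref{a2}. The only departures are organizational: you obtain the $L^\infty$ bound by a Stampacchia truncation where the paper simply invokes the maximum principle, and you integrate by parts on the integrated Fisher information (reading off the relative-entropy derivative directly from the boundary term $[r^{n-1}c^\va_r c^\va_t/c^\va]_{r=a}^{r=b}$), whereas the paper computes $\partial_t(\partial_r\sqrt{c^\va})^2$ pointwise and converts the single diffusive boundary term via the identity \eqref{b5}.
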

\begin{proof}
 The $L^\infty$-estimates of $c^\va$ in \eqref{a1} and \eqref{b25} follow directly from the maximum principle and the nonnegativity of $w^\va$ and $c^\va$.
  We proceed to estimating $\int_a^b r^{n-1}w^\va(t)dr$. Multiplying the first equation of \eqref{e1} by $r^{n-1}$ then integrating the resulting equation over $(a,b)$ and using integration by parts, one gets
\be\label{a3}
\frac{d}{dt}\int_a^b r^{n-1}w^\va dr=[r^{n-1}(w^\va_r-w^\va c^\va_r)]|_{r=a}^{r=b}=0,
\ee
thanks to the first boundary condition in \eqref{e2}. We thus derive the $L^1$ estimate for $w^\va$ in \eqref{a1} and \eqref{b25} by integrating \eqref{a3} with respect to $t$.

It remains to prove \eqref{a2}. First, multiplying the first equation of \eqref{e1} with $r^{n-1}\log w^\va$ and integrating the resulting equation over $(a,b)$, we have
\be\label{a4}
\frac{d}{dt}\int_a^b r^{n-1}(w^\va\log w^\va-w^\va+1)dr
+4\int_a^b r^{n-1}(\partial_r \sqrt{w^\va})^2dr
=\int_a^b r^{n-1}w^\va_r c^\va_rdr.
\ee
It follows from the second equation of \eqref{e1} that
\ben
\begin{split}
2\partial_t\sqrt{c^\va}=\frac{c^\va_t}{\sqrt{c^\va}}
=&\va \frac{c^\va_{rr}}{\sqrt{c^\va}}
+\va \frac{n-1}{r}\frac{c^\va_r}{\sqrt{c^\va}}-w^\va\sqrt{c^\va}\\
=& 2\va\left[\partial^2_r\sqrt{c^\va}+\frac{(n-1)\partial_r
\sqrt{c^\va}}{r}\right]+2\va \frac{(\partial_r\sqrt{c^\va})^2}{\sqrt{c^\va}}-w^\va\sqrt{c^\va},
\end{split}
\enn
which gives rise to
\ben
\begin{split}
\partial_t (\partial_r\sqrt{c^\va})^2=&2(\partial_t\sqrt{c^\va})_r
\cdot(\partial_r\sqrt{c^\va})\\
=&2\va\left[\partial^2_r\sqrt{c^\va}+\frac{(n-1)\partial_r
\sqrt{c^\va}}{r}\right]_r
\cdot(\partial_r\sqrt{c^\va})\\
&+2\va \left[\frac{(\partial_r\sqrt{c^\va})^2}{\sqrt{c^\va}}\right]_r
\cdot(\partial_r\sqrt{c^\va})-(w^\va\sqrt{c^\va})_r
\cdot(\partial_r\sqrt{c^\va}),
\end{split}
\enn
where
\ben
\begin{split}
2\va&\left[\partial^2_r\sqrt{c^\va}+\frac{(n-1)\partial_r
\sqrt{c^\va}}{r}\right]_r
\cdot(\partial_r\sqrt{c^\va})\\
=&2\va\left\{\left[\partial^2_r\sqrt{c^\va}
+\frac{(n-1)\partial_r
\sqrt{c^\va}}{r}\right]
\cdot(\partial_r\sqrt{c^\va})\right\}_r
-2\va\left[\partial^2_r\sqrt{c^\va}+\frac{(n-1)\partial_r
\sqrt{c^\va}}{r}\right]
\cdot\partial_r^2\sqrt{c^\va}\\
=&\va\left\{\left[(\partial_r\sqrt{c^\va})^2\right]_{r}
+\frac{n-1}{r}(\partial_r\sqrt{c^\va})^2
\right\}_r
+\va\left[\frac{n-1}{r}(\partial_r\sqrt{c^\va})^2\right]_r
-2\va (\partial^2_r\sqrt{c^\va})^2
- \va\frac{n-1}{r}\left[(\partial_r\sqrt{c^\va})^2\right]_r\\
=&\va \frac{\left\{r^{n-1}\left[(\partial_r\sqrt{c^\va})^2\right]_{r}
\right\}_r}
{r^{n-1}}
-2\va (\partial^2_r\sqrt{c^\va})^2
-2\va \frac{n-1}{r^2}(\partial_r\sqrt{c^\va})^2
\end{split}
\enn
and
\ben
2\va \left[\frac{(\partial_r\sqrt{c^\va})^2}{\sqrt{c^\va}}\right]_r
\cdot(\partial_r\sqrt{c^\va})
=-2\va\frac{(\pt_r\sq{c^\va})^4}{c^\va}
+4\va\frac{(\pt_r\sq{c^\va})^2(\pt_r^2\sq{c^\va})}{\sq{c^\va}}.
\enn
Hence
\be\label{a5}
\begin{split}
\partial_t (\partial_r\sqrt{c^\va})^2
=&\va \frac{\left\{r^{n-1}\left[(\partial_r\sqrt{c^\va})^2\right]_{r}
\right\}_r}
{r^{n-1}}
-2\va\left[\pt^2_r\sq{c^\va}-\frac{(\pt_r\sq{c^\va})^2}{\sq{c^\va}}
\right]^2\\
&-2\va \frac{n-1}{r^2}(\partial_r\sqrt{c^\va})^2
-(w^\va\sqrt{c^\va})_r
\cdot(\partial_r\sqrt{c^\va})\\
=&\va \frac{\left\{r^{n-1}\left[(\partial_r\sqrt{c^\va})^2\right]_{r}
\right\}_r}
{r^{n-1}}
-2\va\left[\frac{\sq{c^\va}}{2}\left(\frac{ c^\va_{rr}}{c^\va}-\frac{(c^\va_r)^2}{(c^\va)^2}\right)
\right]^2\\
&-2\va \frac{n-1}{r^2}(\partial_r\sqrt{c^\va})^2
-(w^\va\sqrt{c^\va})_r
\cdot(\partial_r\sqrt{c^\va})\\
=&\va \frac{\left\{r^{n-1}\left[(\partial_r\sqrt{c^\va})^2\right]_{r}
\right\}_r}
{r^{n-1}}
-\frac{\va}{2}c^\va\left[(\log c^\va)_{rr}
\right]^2\\
&-2\va \frac{n-1}{r^2}(\partial_r\sqrt{c^\va})^2
-w^\va
(\partial_r\sqrt{c^\va})^2
-\frac{1}{2}w^\va_rc^\va_r.
\end{split}
\ee
Multiplying \eqref{a5} with $2r^{n-1}$ and integrating the resulting equality over $(a,b)$, one derives
\be\label{a6}
\begin{split}
2\frac{d}{dt}&\int_a^b r^{n-1} (\partial_r\sqrt{c^\va})^2dr
+\va\int_a^b r^{n-1}c^{\va} \left[(\log c^\va)_{rr}
\right]^2dr\\
&+4\va(n-1)\int_a^b r^{n-3}(\partial_r\sqrt{c^\va})^2dr
+2\int_a^b r^{n-1}w^\va
(\partial_r\sqrt{c^\va})^2 dr\\
=&2\va \left\{r^{n-1}\left[(\partial_r\sqrt{c^\va})^2\right]_{r}
\right\}\Big|_{r=a}^{r=b}
-\int_a^br^{n-1}w^\va_rc^\va_r dr.
\end{split}
\ee
To estimate the first term on the right-hand side of \eqref{a6}, we multiply the second equation of \eqref{e1} with $\ka\left(1-\frac{\lambda}{c^\va}\right)$ and deduce that
\be\label{b6}
\begin{split}
\frac{d}{dt}&\ka (\lambda\log{\frac{\lambda}{c^\va}}-\lambda+c^\va)\\
&=\ka c^\va_t\left(1-\frac{\lambda}{c^\va}\right)\\
&=\ka\left(1-\frac{\lambda}{c^\va}\right)\va c^\va_{rr}
+\ka\left(1-\frac{\lambda}{c^\va}\right)\va\frac{n-1}{r}c^\va_r
-\ka\left(1-\frac{\lambda}{c^\va}\right)w^\va c^\va,
\end{split}
\ee
which, along with the boundary condition
\ben
\frac{c^\va_r(b,t)}{c^\va(b,t)}
=-\ka\big[1-\frac{\lambda}{c^\va(b,t)}\big]
\enn
 and the identity
\be\label{b5}
\frac{c^\va_rc^\va_{rr}}{c^\va}=2[(\partial_r \sqrt{c^\va})^2]_{r}+\frac{(c^\va_r)^3}{2(c^\va)^2}
\ee
gives rise to
\be\label{b7}
\begin{split}
&\frac{d}{dt}\ka \big[\lambda\log{\frac{\lambda}{c^\va(b,t)}}-\lambda+c^\va(b,t)\big]
\\
=&-2\va \big[(\pt_r\sq{c^\va})^2\big]_r(b,t)
-\frac{\va\kappa\lambda}{2}\frac{[c^\va_r(b,t)]^2}{[c^\va(b,t)]^2}
+\frac{\va\kappa}{2}\frac{[c^\va_r(b,t)]^2}{c^\va(b,t)}\\
&-\va\frac{n-1}{b}\frac{[c^\va_r(b,t)]^2}{c^\va(b,t)}
-\ka\big[1-\frac{\lambda}{c^\va(b,t)}\big]w^\va(b,t) c^\va(b,t).
\end{split}
\ee
Similarly, it follows from \eqref{b6}, the boundary condition
$\frac{c^\va_r(a,t)}{c^\va(a,t)}
=\ka\big[1-\frac{\lambda}{c^\va(a,t)}\big]$ and \eqref{b5} that
\be\label{b8}
\begin{split}
&\frac{d}{dt}\ka \big[\lambda\log{\frac{\lambda}{c^\va(a,t)}}-\lambda+c^\va(a,t)\big]
\\
=&2\va \big[(\pt_r\sq{c^\va})^2\big]_r(a,t)
-\frac{\va\kappa\lambda}{2}\frac{[c^\va_r(a,t)]^2}{[c^\va(a,t)]^2}
+\frac{\va\kappa}{2}\frac{[c^\va_r(a,t)]^2}{c^\va(a,t)}\\
&+\va\frac{n-1}{a}\frac{[c^\va_r(a,t)]^2}{c^\va(a,t)}
-\ka\big[1-\frac{\lambda}{c^\va(a,t)}\big]w^\va(a,t) c^\va(a,t).
\end{split}
\ee
Substituting \eqref{b7} and \eqref{b8} into \eqref{a6} one gets
\ben
\begin{split}
2\frac{d}{dt}&\int_a^b r^{n-1} (\partial_r\sqrt{c^\va})^2dr
+\ka b^{n-1}\frac{d}{dt} \big[\lambda\log{\frac{\lambda}{c^\va(b,t)}}-\lambda+c^\va(b,t)\big]\\
&+\ka a^{n-1}\frac{d}{dt} \big[\lambda\log{\frac{\lambda}{c^\va(a,t)}}-\lambda+c^\va(a,t)\big]
+\va\int_a^b r^{n-1}c^{\va} \left[(\log c^\va)_{rr}
\right]^2dr\\
&+\frac{\va\kappa\lambda}{2}\left\{b^{n-1}\frac{[c^\va_r(b,t)]^2}{[c^\va(b,t)]^2}
+a^{n-1}\frac{[c^\va_r(a,t)]^2}{[c^\va(a,t)]^2}
\right\}\\
\leq&
-\int_a^br^{n-1}w^\va_rc^\va_r dr
+\va(n-1)a^{n-2}\frac{[c^\va_r(a,t)]^2}{c^\va(a,t)}\\
&+\frac{\va\kappa}{2}\left\{b^{n-1}\frac{[c^\va_r(b,t)]^2}{c^\va(b,t)}
+a^{n-1}\frac{[c^\va_r(a,t)]^2}{c^\va(a,t)}
\right\}\\
&+\ka b^{n-1}[\lambda-c^\va(b,t)]w^\va(b,t)
+\ka a^{n-1}[\lambda-c^\va(a,t)]w^\va(a,t),
\end{split}
\enn
which, added to \eqref{a4} gives \eqref{a2}. The proof is completed.

\end{proof}

\begin{lemma}\label{l10} Suppose the assumptions in Theorem \ref{t1} hold. Let $(w^\va,c^\va)$ with $0<\va<1$ be the global solutions of \eqref{e1}-\eqref{e2}, derived in Lemma \ref{l9}.
\\
(i) If $\kappa>0$. Then for any $0<T<\infty$, there exists a constant $C_{\lambda}$ depending on $a$, $b$, $n$, $\|w_0\|_{L^2}$, $\|c_0\|_{H^1}$, $T$ and $\lambda$ such that
\be\label{b16}
\begin{split}
\sup_{0<t\leq T}&\left[\int_a^b r^{n-1}(w^\va\log w^\va-w^\va+1)(t)dr+
\int_a^b r^{n-1} (\partial_r\sqrt{c^\va})^2(t)dr\right]\\
+&\int_0^T\int_a^b r^{n-1}(\partial_r \sqrt{w^\va})^2drdt+\va\int_0^T\int_a^b r^{n-1}c^{\va} \left[(\log c^\va)_{rr}
\right]^2drdt\\
+&\frac{\va\kappa\lambda}{4}\int_0^T\left\{b^{n-1}\frac{[c^\va_r(b,t)]^2}{[c^\va(b,t)]^2}
+a^{n-1}\frac{[c^\va_r(a,t)]^2}{[c^\va(a,t)]^2}
\right\}dt\\
\leq
C_{\lambda}&(1+e^{C_{\lambda} \kappa^2}).
\end{split}
\ee
\\
(ii) If $\kappa=0$.
Then for any $0<T<\infty$, there exists a constant $C$ depending on $a$, $b$, $n$, $\|w_0\|_{L^2}$, $\|c_0\|_{H^1}$ and $T$ such that
\be\label{b17}
\begin{split}
\sup_{0<t\leq T}&\left[\int_a^b r^{n-1}(w^\va\log w^\va-w^\va+1)(t)dr+
\int_a^b r^{n-1} (\partial_r\sqrt{c^\va})^2(t)dr\right]\\
+&\int_0^T\int_a^b r^{n-1}(\partial_r \sqrt{w^\va})^2drdt
+\va\int_0^T\int_a^b r^{n-1}c^{\va} \left[(\log c^\va)_{rr}
\right]^2drdt
\leq
C.
\end{split}
\ee
\end{lemma}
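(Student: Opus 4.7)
The plan is to integrate the pointwise differential inequality \eqref{a2} from Lemma \ref{l1} over $[0,T]$. Once the time derivatives are undone, every term on the left of \eqref{a2} is non-negative (the boundary quantities $\lambda\log(\lambda/c)-\lambda+c$ are Bregman-type entropies, and $w\log w-w+1\geq 0$), so time-integration of the left recovers precisely the combination that appears in \eqref{b16} and \eqref{b17}, up to an initial value. The hypothesis $c_0>0$ on the closed interval $[a,b]$ yields $c_0\geq c_*>0$, which together with $w_0,c_0\in H^2$ renders the initial entropy finite.

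Case $\kappa=0$ is immediate: the Neumann boundary conditions $c^\va_r(a,t)=c^\va_r(b,t)=0$ annihilate every right-hand-side term of \eqref{a2}, leaving $\tfrac{d}{dt}(\mathrm{entropy})+\mathrm{dissipation}\leq 0$, and a direct time-integration yields \eqref{b17}. For $\kappa>0$ the work is in showing that the right-hand side of \eqref{a2}, after time-integration, is bounded by $C_\lambda(1+e^{C_\lambda\kappa^2})$. I first substitute the Robin conditions $c^\va_r(\cdot,t)=\mp\kappa(\lambda-c^\va(\cdot,t))$ at $r=a,b$, which converts the two diffusive boundary terms on the right of \eqref{a2} into $\va\kappa^2[\lambda-c^\va]^2/c^\va$, while the left already carries $\tfrac{\va\kappa^3\lambda}{2}[\lambda-c^\va]^2/[c^\va]^2$. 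A Young-type split
\[
\frac{(\lambda-c)^2}{c}\leq C_\lambda+\frac{\lambda}{2}\cdot\frac{(\lambda-c)^2}{c^2},
\]
obtained by handling the cases $c\geq\lambda/2$ (using the $L^\infty$ bound $c^\va\leq\max(\|c_0\|_\infty,\lambda)$ from \eqref{a1}) and $c<\lambda/2$ (trivially) separately, lets the singular piece be absorbed into the left-hand dissipation and leaves a bounded remainder of order $\va\kappa^3\leq\kappa^3$.

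The genuinely delicate right-hand-side terms are the convection traces $\kappa[\lambda-c^\va(\cdot,t)]w^\va(\cdot,t)$. Since $|\lambda-c^\va|\leq C_\lambda$ pointwise by \eqref{a1}, it is enough to control the boundary values of $w^\va$. I would use the one-dimensional trace inequality together with the chain-rule identity $w^\va_r=2\sqrt{w^\va}\,\partial_r\sqrt{w^\va}$ and Cauchy--Schwarz to obtain
\[
w^\va(b,t)\leq C_0\|w^\va(\cdot,t)\|_{L^1}+C_0\|w^\va(\cdot,t)\|_{L^1}^{1/2}\|\partial_r\sqrt{w^\va}\|_{L^2},
\]
and symmetrically at $r=a$. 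The mass conservation from Lemma \ref{l1} makes the $L^1$ factor a harmless universal constant, so Young's inequality bounds the convection trace by $C_\lambda\kappa^2+\delta\|\partial_r\sqrt{w^\va}\|_{L^2}^2$ for any $\delta>0$; choosing $\delta$ smaller than a suitable multiple of $a^{n-1}$ lets the second term be absorbed into the Fisher-information dissipation $4\int_a^b r^{n-1}(\partial_r\sqrt{w^\va})^2\,dr$ on the left. Integrating from $0$ to $T$ and summing all contributions yields the polynomial bound $C_\lambda(1+\kappa^3)$, which is in turn majorized by the stated $C_\lambda(1+e^{C_\lambda\kappa^2})$. The main obstacle is precisely this splitting of the singular $1/c^\va$ boundary fluxes, which must be carried out without any \emph{a priori} positive lower bound on $c^\va$ at the endpoints.
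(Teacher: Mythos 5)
Your proposal is correct in substance and shares the paper's skeleton (integrate \eqref{a2} in time, use nonnegativity of the entropies, absorb the boundary contributions into the dissipation, identical treatment of the $\kappa=0$ case and of the convection traces $\ka[\lambda-c^\va]w^\va$ via the trace/Gagliardo--Nirenberg bound on $\sqrt{w^\va}$ plus mass conservation). Where you genuinely diverge is the treatment of the two diffusive boundary terms $\va(n-1)a^{n-2}[c^\va_r(a,t)]^2/c^\va(a,t)$ and $\tfrac{\va\ka}{2}\{b^{n-1}[c^\va_r(b,t)]^2/c^\va(b,t)+\cdots\}$: you re-substitute the Robin condition to write them as $\va\ka^2(\lambda-c^\va)^2/c^\va$ and then use the pointwise split $(\lambda-c)^2/c\le C_\lambda+\eta(\lambda-c)^2/c^2$, absorbing the singular part into the left-hand dissipation $\tfrac{\va\ka\lambda}{2}\{[c^\va_r]^2/[c^\va]^2\}$. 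The paper instead bounds $[c^\va_r]^2/c^\va=4(\partial_r\sqrt{c^\va})^2\le 4\|\partial_r\sqrt{c^\va}\|_{L^\infty}^2$, interpolates this against $\|\partial_r^2\sqrt{c^\va}\|_{L^2}$, and controls the latter through the integration-by-parts estimates \eqref{b47}--\eqref{b13} for $\int r^{n-1}(c^\va_r)^4/(c^\va)^3$ and $\int r^{n-1}(c^\va_{rr})^2/c^\va$; this leaves a term $C_0(\ka+1)\|r^{(n-1)/2}\partial_r\sqrt{c^\va}\|_{L^2}^2$ on the right and hence requires Gronwall, producing the $e^{C_\lambda\ka^2}$ in \eqref{b16}. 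Your route is shorter, needs no Gronwall for part (i), and would even yield a polynomial-in-$\ka$ bound, which trivially implies \eqref{b16}; on the other hand the paper's detour is not wasted, since \eqref{b13} is reused in Lemma \ref{l2} to obtain $\va\int_0^T\|c^\va_{rr}\|_{L^2}^2\,dt\le C_\lambda$. One quantitative point you must repair: with the fixed split constant $\lambda/2$, the piece coming from $\va(n-1)a^{n-2}[c^\va_r(a,t)]^2/c^\va(a,t)$ becomes $\va(n-1)a^{n-2}\ka^2\cdot\tfrac{\lambda}{2}(\lambda-c^\va(a,t))^2/[c^\va(a,t)]^2$, and absorbing it into the available budget $\tfrac{\va\ka^3\lambda}{4}a^{n-1}(\lambda-c^\va(a,t))^2/[c^\va(a,t)]^2$ requires $\ka\ge 2(n-1)/a$, which fails for small $\ka>0$. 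The fix is immediate: perform the case split at level $c^\va<\eta$ with $\eta\sim\ka\lambda a/(n-1)$ (i.e.\ use $(\lambda-c)^2/c\le C_{\lambda}/\eta+\eta(\lambda-c)^2/c^2$ with a $\ka$-dependent $\eta$), at the price of a remainder of order $C_\lambda\ka$, which is harmless. With that adjustment your argument closes.
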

\begin{proof}
We first prove \eqref{b16}. To bound the first three terms on the right-hand side of \eqref{a2}, we employ the Gagliardo-Nirenberg interpolation inequality to deduce that
\be\label{b10}
\begin{split}
\|\partial_r\sqrt{c^\va}\|_{L^\infty}^2
\leq& C_0\|\partial_r^2\sqrt{c^\va}\|_{L^2}
\|\partial_r\sqrt{c^\va}\|_{L^2}
+C_0\|\partial_r\sqrt{c^\va}\|_{L^2}^2\\
\leq &\theta \|r^{\frac{n-1}{2}}\pt_r^2\sqrt{c^\va}
\|_{L^2}^2+\frac{C_1}{\theta}\|r^{\frac{n-1}{2}}\pt_r\sqrt{c^\va}
\|_{L^2}^2
\end{split}
\ee
for any $0<\theta\leq1$, where the constants $C_0$ and $C_{1}$ depend on $a$, $b$ and $n$. A direct computation gives
\ben
\begin{split}
\partial_r^2\sqrt{c^\va}=\frac{c^\va_{rr}}{2\sqrt{c^\va}}
-\frac{(c^\va_r)^2}{4c^\va\sqrt{c^\va}},
\end{split}
\enn
which, along with \eqref{b10} leads to
\be\label{b11}
\begin{split}
\|\partial_r\sqrt{c^\va}\|_{L^\infty}^2
\leq \frac{\theta}{2}\int_a^b r^{n-1}\frac{(c^\va_{rr})^2}{c^\va}\,dr
+\frac{\theta}{8}\int_{a}^b r^{n-1}\frac{(c^\va_r)^4}{(c^\va)^3}\,dr
+\frac{C_1}{\theta}\|r^{\frac{n-1}{2}}\pt_r\sqrt{c^\va}
\|_{L^2}^2.
\end{split}
\ee
Integration by parts entails that
\be\label{b47}
\begin{split}
2\int_a^b r^{n-1} \frac{(c^\va_r)^4}{(c^\va)^3}dr
=&2\left[r^{n-1}\frac{(c^\va_r)^3}{(c^\va)^3}\cdot c^\va
\right]\Big|_{r=a}^{r=b}
-2(n-1) \int_a^b r^{n-2} \frac{(c^\va_r)^3}{(c^\va)^2}dr\\
&-6\int_a^b r^{n-1}\frac{(c^\va_r)^2}{(c^\va)^2}\cdot (\log c^\va)_{rr}\cdot c^\va dr\\
:=&I_1+I_2+I_3.
\end{split}
\ee
It follows from the fact $c^\va>0$ in $(a,b)\times(0,\infty)$ and the boundary conditions $c^\va_r(b,t)=\kappa[\lambda-c^{\va}(b,t)]$, $c^\va_r(a,t)=-\kappa[\lambda-c^{\va}(a,t)]$ that
\ben
I_1\leq 2\kappa\lambda\left\{b^{n-1}\frac{[c^\va_r(b,t)]^2}{[c^\va(b,t)]^2}
+a^{n-1}\frac{[c^\va_r(a,t)]^2}{[c^\va(a,t)]^2}
\right\}.
\enn
The Cauchy-Schwarz inequality gives
\be\label{b50}
I_3\leq \frac{1}{2}\int_a^b r^{n-1} \frac{(c^\va_r)^4}{(c^\va)^3}dr
+C_2\int_a^b r^{n-1}c^\va [(\log c^\va)_{rr}]^2dr,
\ee
where the constant $C_2$ depends on $a$, $b$ and $n$.
 The Cauchy-Schwarz inequality and \eqref{a1} imply that  there exists a constant $C_3$ depending on $a$, $b$ and $n$, such that for any $\eta>0$, the following holds true:
 \ben
 \begin{split}
 I_2\leq& \eta \int_a^b r^{n-1}\frac{(c^\va_r)^4}{(c^\va)^\frac{8}{3}}dr
 +\frac{C_3}{\eta}\\
 \leq &\eta (\|c_0\|_{L^\infty}+\lambda)^{\frac{1}{3}}
 \int_a^b r^{n-1}\frac{(c^\va_r)^4}{(c^\va)^3}dr
 +\frac{C_3}{\eta}.
 \end{split}
 \enn
 Setting $\eta=\frac{1}{2}(\|c_0\|_{L^\infty}+\lambda)^{-\frac{1}{3}}$, the above inequality gives rise to
 \be\label{b49}
 I_2\leq \frac{1}{2} \int_a^b r^{n-1}\frac{(c^\va_r)^4}{(c^\va)^3}dr
 +2C_3(\|c_0\|_{L^\infty}+\lambda)^{\frac{1}{3}}.
 \ee
 Substituting the above estimates for $I_1$ - $I_3$ into \eqref{b47}, one gets
\be\label{b12}
\begin{split}
\int_a^b r^{n-1} \frac{(c^\va_r)^4}{(c^\va)^3}dr
\leq & 2\kappa\lambda\left\{b^{n-1}\frac{[c^\va_r(b,t)]^2}{[c^\va(b,t)]^2}
+a^{n-1}\frac{[c^\va_r(a,t)]^2}{[c^\va(a,t)]^2}
\right\}\\
&+C_2\int_a^b r^{n-1}c^\va [(\log c^\va)_{rr}]^2dr
+2C_3(\|c_0\|_{L^\infty}+\lambda)^{\frac{1}{3}}.
\end{split}
\ee
A direct calculation yields
\be\label{b48}
\begin{split}
\int_a^b r^{n-1} \frac{(c^\va_{rr})^2}{c^\va} dr
%=& \int_a^b r^{n-1}c^\va\left[
%\left(\frac{c^\va_{rr}}{c^\va}-\frac{(c^\va_r)^2}{(c^\va)^2}
%\right)+\frac{(c^\va_r)^2}{(c^\va)^2}
%\right]dr\\
\leq& 2 \int_a^b r^{n-1} c^\va
\left[\frac{c^\va_{rr}}{c^\va}-\frac{(c^\va_r)^2}{(c^\va)^2}
\right]^2 dr
+2 \int_a^b r^{n-1} c^\va\frac{(c^\va_r)^4}{(c^\va)^4}
dr\\
=&2\int_a^b  r^{n-1}c^\va[(\log c^\va)_{rr}]^2dr
+2\int_a^b r^{n-1}\frac{(c^\va_r)^4}{(c^\va)^3}
dr,
\end{split}
\ee
which, along with \eqref{b12} gives
\be\label{b13}
\begin{split}
\int_a^b r^{n-1} \frac{(c^\va_{rr})^2}{c^\va} dr
\leq&
4\kappa\lambda\left\{b^{n-1}\frac{[c^\va_r(b,t)]^2}{[c^\va(b,t)]^2}
+a^{n-1}\frac{[c^\va_r(a,t)]^2}{[c^\va(a,t)]^2}
\right\}\\
&+2(C_2+1)\int_a^b  r^{n-1}c^\va[(\log c^\va)_{rr}]^2dr+4C_3(\|c_0\|_{L^\infty}+\lambda)^{\frac{1}{3}}.
\end{split}
\ee
Substituting \eqref{b12} and \eqref{b13} into \eqref{b11}, one immediately gets
\be\label{b14}
\begin{split}
\|\partial_r\sqrt{c^\va}\|_{L^\infty}^2
\leq& \frac{9}{4}\kappa\lambda\theta\left\{b^{n-1}\frac{[c^\va_r(b,t)]^2}{[c^\va(b,t)]^2}
+a^{n-1}\frac{[c^\va_r(a,t)]^2}{[c^\va(a,t)]^2}
\right\}\\
&+\big(\frac{9}{8}C_2+1\big)\theta\int_a^b  r^{n-1}c^\va[(\log c^\va)_{rr}]^2dr
+\frac{C_1}{\theta}\|r^{\frac{n-1}{2}}\pt_r\sqrt{c^\va}
\|_{L^2}^2\\
&+\frac{9}{4}C_3\theta(\|c_0\|_{L^\infty}+\lambda)^{\frac{1}{3}}.
\end{split}
\ee
Let
\ben
\begin{split}
\theta=\min\{&[72(n-1)a^{n-2}]^{-1}, [(18C_2+16)(n-1)a^{n-2}]^{-1}, \\ &\ \ [36(a^{n-1}+b^{n-1})\kappa]^{-1}, [(9C_2+8)(a^{n-1}+b^{n-1})\kappa]^{-1},1\}.
\end{split}
\enn
Then from \eqref{b14} one deduces that
\be\label{b15}
\begin{split}
\va(n-1)a^{n-2}&\frac{[c^\va_r(a,t)]^2}{c^\va(a,t)}
+\frac{\va\kappa}{2}\left\{b^{n-1}\frac{[c^\va_r(b,t)]^2}{c^\va(b,t)}
+a^{n-1}\frac{[c^\va_r(a,t)]^2}{c^\va(a,t)}
\right\}\\
\leq &4\va(n-1)a^{n-2} \|\partial_r\sqrt{c^\va}\|_{L^\infty}^2
+2\va\kappa (a^{n-1}+b^{n-1})\|\partial_r\sqrt{c^\va}\|_{L^\infty}^2\\
\leq& \frac{\va\kappa\lambda}{4}\left\{b^{n-1}\frac{[c^\va_r(b,t)]^2}{[c^\va(b,t)]^2}
+a^{n-1}\frac{[c^\va_r(a,t)]^2}{[c^\va(a,t)]^2}
\right\}
+\frac{1}{2}\va\int_a^b  r^{n-1}c^\va[(\log c^\va)_{rr}]^2dr\\
&+C_0(\kappa+1)\|r^{\frac{n-1}{2}}\pt_r\sqrt{c^\va}
\|_{L^2}^2+C_0(\|c_0\|_{L^\infty}+\lambda)^{\frac{1}{3}}
\end{split}
\ee
for each $t>0$, where the constant $C_0$ depends on $a$, $b$ and $n$.
By the Gagliardo-Nirenberg interpolation inequality, the fourth and fifth term on the right-hand side of \eqref{a2} enjoy the following bounds:
\ben
\begin{split}
 \ka b^{n-1}&[\lambda-c^\va(b,t)]w^\va(b,t)
+\ka a^{n-1}[\lambda-c^\va(a,t)]w^\va(a,t)\\
\leq & \lambda\kappa(a^{n-1}+b^{n-1})\|\sqrt{w^\va}\|_{L^{\infty}}\\
\leq&C_0\lambda\kappa (\|\sqrt{w^\va}\|_{L^{2}}^{\frac{1}{2}}\|\partial_r\sqrt{w^\va}\|_{L^{2}}^{\frac{1}{2}}
+\|\sqrt{w^\va}\|_{L^{2}})\\
\leq& \|r^{\frac{n-1}{2}}\partial_r\sqrt{w^\va}\|_{L^{2}}^2
+C_0\|\sqrt{w^\va}\|_{L^{2}}^2 +C_0 \lambda^2 \kappa^2
\end{split}
\enn
for each $t>0$, which, along with \eqref{b15} and \eqref{a2} leads to
\ben
\begin{split}
\frac{d}{dt}&\int_a^b r^{n-1}(w^\va\log w^\va-w^\va+1)dr+
2\frac{d}{dt}\int_a^b r^{n-1} (\partial_r\sqrt{c^\va})^2dr\\
&+\ka b^{n-1}\frac{d}{dt} \big[\lambda\log{\frac{\lambda}{c^\va(b,t)}}-\lambda+c^\va(b,t)\big]
+\ka a^{n-1}\frac{d}{dt} \big[\lambda\log{\frac{\lambda}{c^\va(a,t)}}-\lambda+c^\va(a,t)\big]\\
&+3\int_a^b r^{n-1}(\partial_r \sqrt{w^\va})^2dr+\frac{\va}{2}\int_a^b r^{n-1}c^{\va} \left[(\log c^\va)_{rr}
\right]^2dr\\
&+\frac{\va\kappa\lambda}{4}\left\{b^{n-1}\frac{[c^\va_r(b,t)]^2}{[c^\va(b,t)]^2}
+a^{n-1}\frac{[c^\va_r(a,t)]^2}{[c^\va(a,t)]^2}
\right\}\\
\leq&
C_0\|w^\va\|_{L^{1}}
+C_0(\kappa+1)\|r^{\frac{n-1}{2}}\pt_r\sqrt{c^\va}
\|_{L^2}^2
+C_0(\|c_0\|_{L^\infty}+\lambda)^{\frac{1}{3}}+C_0\lambda^2\kappa^2.
\end{split}
\enn
Applying Gronwall's inequality to this inequality, using \eqref{a1} and the following fact
\be\label{c2}
\begin{split}
&\qquad\quad(w^\va\log w^\va-w^\va+1)>0 \quad \text{for}\ \ (r,t)\in (a,b)\times [0,\infty),\\
&\lambda\log{\frac{\lambda}{c^\va(b,t)}}-\lambda+c^\va(b,t),\ \ \lambda\log{\frac{\lambda}{c^\va(a,t)}}-\lambda+c^\va(a,t)>0
\quad \text{for}\ \ t\in[0,\infty)
\end{split}
\ee
thanks to the nonnegativity of $w^\va$ and $c^\va$, one derives \eqref{b16}. We proceed to proving \eqref{b17}. When $\kappa=0$, from \eqref{e2} we know that
 \be\label{c4}
 w^\va_r(a,t)=w^\va_r(b,t)=c^\va_r(a,t)=c^\va_r(b,t)=0,
  \ee
 which, indicates that
\be\label{c1}
2\va \left\{r^{n-1}\left[(\partial_r\sqrt{c^\va})^2\right]_{r}
\right\}\Big|_{r=a}^{r=b}=0.
 \ee
 Thus by adding \eqref{a4} to \eqref{a6} and using \eqref{c1} one immediately gets
\ben
\begin{split}
\frac{d}{dt}&\int_a^b r^{n-1}(w^\va\log w^\va-w^\va+1)dr+
2\frac{d}{dt}\int_a^b r^{n-1} (\partial_r\sqrt{c^\va})^2dr
+4\int_a^b r^{n-1}(\partial_r \sqrt{w^\va})^2dr\\
&+\va\int_a^b r^{n-1}c^{\va} \left[(\log c^\va)_{rr}
\right]^2dr
+4\va(n-1)\int_a^b r^{n-3}(\partial_r\sqrt{c^\va})^2dr
+2\int_a^b r^{n-1}w^\va
(\partial_r\sqrt{c^\va})^2 dr=0.
\end{split}
\enn
Integrating the above equality with respect to $t$ and using \eqref{c2} one gets \eqref{b17}.
 The proof is completed.

\end{proof}

We proceed to derive the higher order estimates for $(w^\va,c^\va)$.
\begin{lemma}\label{l2} Let the assumptions in Theorem \ref{t1} hold and let $(w^\va,c^\va)$ with $0<\va<1$ be the global solutions of \eqref{e1}-\eqref{e2}, derived in Lemma \ref{l9}.\\
(i) If $\kappa>0$.
 Then for any $0<T<\infty$, there exists a constant $C_{\lambda}$ depending on $a$, $b$, $n$, $\|w_0\|_{L^2}$, $\|c_0\|_{H^1}$, $T$ and $\lambda$ such that
\be\label{b18}
\begin{split}
\sup_{0<t\leq T}&\left[\int_a^b r^{n-1}(c^\va_r)^2(t)dr
+\int_{a}^b r^{n-1}(w^\va)^2(t)dr
\right]\\
&\ \ +\int_0^T\int_{a}^b r^{n-1}(w^\va_r)^2drdt
+\int_0^T\int_a^b r^{n-1}(c^\va_t)^2 drdt
\leq C_{\lambda}(1+\exp\{\exp\{C_{\lambda}\ka^2\}\}).
\end{split}
\ee
(ii) If $\kappa=0$.  Then for any $0<T<\infty$, there exists a constant $C$ depending on $a$, $b$, $n$, $\|w_0\|_{L^2}$, $\|c_0\|_{H^1}$ and $T$ such that
\be\label{b19}
\begin{split}
\sup_{0<t\leq T}&\left[\int_a^b r^{n-1}(c^\va_r)^2(t)dr
+\int_{a}^b r^{n-1}(w^\va)^2(t)dr
\right]\\
&\ \ +\int_0^T\int_{a}^b r^{n-1}(w^\va_r)^2drdt
+\int_0^T\int_a^b r^{n-1}(c^\va_t)^2 drdt
\leq C.
\end{split}
\ee
\end{lemma}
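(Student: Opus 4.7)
My plan is to bootstrap from the bounds already furnished by Lemma~\ref{l10}, coupling three standard $L^2$-energy estimates. The key preliminary observation is that the $L^\infty_t L^2$-control of $c^\va_r$ is essentially free: the pointwise identity $(c^\va_r)^2 = 4c^\va(\partial_r\sqrt{c^\va})^2$ together with the uniform $L^\infty$-bound on $c^\va$ from Lemma~\ref{l1} gives
\begin{equation*}
\int_a^b r^{n-1}(c^\va_r)^2\,dr \leq 4\|c^\va\|_{L^\infty}\int_a^b r^{n-1}(\partial_r\sqrt{c^\va})^2\,dr,
\end{equation*}
so \eqref{b16}--\eqref{b17} immediately furnish this quantity with the bound $C_\lambda(1+e^{C_\lambda\kappa^2})$ when $\kappa>0$ and $C$ when $\kappa=0$.

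For the $w^\va$-estimate I would rewrite the first equation of \eqref{e1} in divergence form $w_t = r^{-(n-1)}[r^{n-1}(w_r - wc_r)]_r$ and multiply by $r^{n-1}w^\va$. The no-flux condition kills all boundary terms and yields
\begin{equation*}
\tfrac{1}{2}\tfrac{d}{dt}\int_a^b r^{n-1}(w^\va)^2\,dr + \int_a^b r^{n-1}(w^\va_r)^2\,dr = \int_a^b r^{n-1} w^\va w^\va_r c^\va_r\,dr.
\end{equation*}
Estimating the right-hand side via Cauchy--Schwarz and the one-dimensional Gagliardo--Nirenberg interpolation $\|w^\va\|_{L^\infty}^2 \leq C(\|w^\va\|_{L^2}\|w^\va_r\|_{L^2} + \|w^\va\|_{L^2}^2)$, then absorbing $\|w^\va_r\|_{L^2}^2$ on the left, produces
\begin{equation*}
\tfrac{d}{dt}\|w^\va\|_{L^2}^2 + \tfrac{1}{2}\|w^\va_r\|_{L^2}^2 \leq C\bigl(1+\|c^\va_r\|_{L^2}^4\bigr)\|w^\va\|_{L^2}^2.
\end{equation*}
With Step~1 plugged in, Gronwall's inequality delivers the required $L^\infty_t L^2$-bound on $w^\va$ and $L^2_t L^2$-bound on $w^\va_r$; the outer exponential $\exp\{\exp\{C_\lambda\kappa^2\}\}$ arises because Gronwall integrates $e^{C_\lambda\kappa^2}$ in time.

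For the $c^\va_t$-bound I would multiply the second equation of \eqref{e1} by $r^{n-1}c^\va_t$ and integrate by parts on the $\va$-diffusion. Using $c^\va_t = -\partial_t(\lambda-c^\va)$ on the boundary, the Robin contribution $\va[c^\va_t r^{n-1}c^\va_r]_a^b$ reorganises into the perfect time derivative $-\tfrac{\va\kappa}{2}\partial_t[b^{n-1}(\lambda-c^\va(b,t))^2 + a^{n-1}(\lambda-c^\va(a,t))^2]$, which when transferred to the left becomes a nonnegative term. After absorbing $\tfrac{1}{2}\int r^{n-1}(c^\va_t)^2$ via Cauchy--Schwarz on the reaction integral $\int r^{n-1}wc\,c_t$ and using the $L^\infty$-bound on $c^\va$, integration in $t$ yields
\begin{equation*}
\int_0^T\!\!\int_a^b r^{n-1}(c^\va_t)^2\,dr\,dt \leq C_\lambda + C_\lambda\int_0^T\|w^\va\|_{L^2}^2\,dt,
\end{equation*}
whose right-hand side is controlled through Step~2, completing part~(i). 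For $\kappa=0$ the Robin conditions reduce to homogeneous Neumann, all $\kappa$-weighted boundary terms simply vanish, and the identical scheme using the $\kappa$-independent constants from \eqref{b17} delivers \eqref{b19}.

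The principal obstacle is preventing any negative power of $\va$ from entering the constants. The ``$\sqrt{c}$-trick'' in Step~1 is indispensable because it bounds $c^\va_r$ without touching $c^\va_{rr}$, and in Step~3 the specific structure $c_r\cdot\vec{\nu}=\kappa(\lambda-c)$ of the Robin condition is precisely what converts the $\va$-weighted boundary remainder into a dissipative time derivative rather than an uncontrolled forcing term; the double-exponential $\kappa$-dependence in part~(i) is inherent to this coupling.
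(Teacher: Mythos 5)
Your Steps 1 and 2 coincide with the paper's argument: the identity $(c^\va_r)^2=4c^\va(\partial_r\sqrt{c^\va})^2$ combined with the $L^\infty$-bound on $c^\va$ and \eqref{b16}--\eqref{b17} gives the $c^\va_r$-estimate (this is exactly \eqref{a9}/\eqref{b21}), and the $w^\va$-energy identity, Gagliardo--Nirenberg absorption and Gronwall step reproduce \eqref{b20} and \eqref{a10} verbatim, including the source of the double exponential in $\kappa$. Your Step 3, however, takes a genuinely different route to the $c^\va_t$-bound. The paper does not test with $c^\va_t$ at all: it reads $c^\va_t$ off directly from the second equation of \eqref{e1} and therefore must first control $\va\int_0^T\int r^{n-1}(c^\va_{rr})^2$, which it extracts from the entropy machinery of Lemma \ref{l10} via \eqref{b13} (the pointwise bound of $(c^\va_{rr})^2/c^\va$ by $c^\va[(\log c^\va)_{rr}]^2$, the quartic term $(c^\va_r)^4/(c^\va)^3$ and the $\kappa\lambda$-weighted boundary integrals). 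You instead multiply by $r^{n-1}c^\va_t$, integrate the diffusion by parts, and exploit the Robin structure to turn the boundary remainder $\va[r^{n-1}c^\va_rc^\va_t]_a^b$ into the exact derivative $-\tfrac{\va\kappa}{2}\partial_t\bigl[b^{n-1}(\lambda-c^\va(b,t))^2+a^{n-1}(\lambda-c^\va(a,t))^2\bigr]$, which is dissipative after integration in time (its initial-data contribution is harmless since the constant may depend on $\lambda$, $\kappa$ and $\|c_0\|_{H^1}$, as may the term $\tfrac{\va}{2}\int r^{n-1}(c_{0r})^2$ coming from $\tfrac{\va}{2}\tfrac{d}{dt}\int r^{n-1}(c^\va_r)^2$). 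Your computation of the boundary sign is correct at both endpoints, and the resulting bound closes with only Step 2 as input. What each approach buys: yours is more self-contained for this particular quantity, avoiding any second-derivative control of $c^\va$; the paper's is essentially free given that it needs \eqref{b13} anyway for the subsequent lemmas, and as a by-product it records the $\va$-weighted $c^\va_{rr}$ estimate that is reused later. Both are valid proofs of \eqref{b18} and \eqref{b19}.
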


\begin{proof}
We first prove \eqref{b18}.
\eqref{a1} and \eqref{b16} leads to
\be\label{a9}
\begin{split}
\sup_{0<t\leq T}\int_a^b r^{n-1}(c^\va_r)^2(t)dr
\leq& 4\left[\sup_{0<t\leq T}\int_a^b r^{n-1}(\pt_r\sq{c^\va})^2(t)dr
\right]\cdot
\left[\sup_{t>0}\|c^\va(t)\|_{L^\infty}
\right]\\
\leq& C_{\lambda}(1+e^{C_{\lambda}\ka^2}).
\end{split}
\ee
Multiplying the first equation of \eqref{e1} with $r^{n-1}w^\va$, then integrating the resulting equality over $(a,b)$ and employing the Gagliardo-Nirenberg interpolation inequality to have
\ben
\begin{split}
\frac{1}{2}\frac{d}{dt}&\int_{a}^b r^{n-1}(w^\va)^2dr
+\int_{a}^b r^{n-1}(w^\va_r)^2dr\\
=&\int_a^b r^{n-1}w^\va w^\va_r c^\va_r dr\\
\leq & \|r^{\frac{n-1}{2}}c^\va_r\|_{L^2}\|r^{\frac{n-1}{2}}w^\va_r\|_{L^2}
\|w^\va\|_{L^\infty}
\\
\leq &C_0\|r^{\frac{n-1}{2}}c^\va_r\|_{L^2}\|r^{\frac{n-1}{2}}w^\va_r\|_{L^2}
\Big(\|r^{\frac{n-1}{2}}w^\va\|_{L^2}
+\|r^{\frac{n-1}{2}}w^\va\|^{\frac{1}{2}}_{L^2}
\|r^{\frac{n-1}{2}}w^\va_r\|^{\frac{1}{2}}_{L^2}
\Big)\\
\leq &\frac{1}{2}\|r^{\frac{n-1}{2}}w^\va_r\|^2_{L^2}
+C_0(\|r^{\frac{n-1}{2}}c_r^\va\|^2_{L^2}
+\|r^{\frac{n-1}{2}}c_r^\va\|^4_{L^2})
\|r^{\frac{n-1}{2}}w^\va\|^2_{L^2},
\end{split}
\enn
that is,
\be\label{b20}
\begin{split}
\frac{d}{dt}\int_{a}^b r^{n-1}(w^\va)^2dr
+\int_{a}^b r^{n-1}(w^\va_r)^2dr
\leq
C_0(\|r^{\frac{n-1}{2}}c_r^\va\|^2_{L^2}
+\|r^{\frac{n-1}{2}}c_r^\va\|^4_{L^2})
\|r^{\frac{n-1}{2}}w^\va\|^2_{L^2},
\end{split}
\ee
which, along with Gronwall's inequality and \eqref{a9} gives
\be\label{a10}
\sup_{0<t\leq T}\int_{a}^b r^{n-1}(w^\va)^2(t)dr
+\int_0^T\int_{a}^b r^{n-1}(w^\va_r)^2drdt
\leq C_{\lambda}(1+\exp\{\exp\{C_{\lambda}\ka^2\}\}).
\ee
Using \eqref{a1}, \eqref{b13} and \eqref{b16}, one has
\ben
\begin{split}
\va\int_0^T\int_a^b r^{n-1}(c^\va_{rr})^2 drdt
\leq& \sup_{t>0}\|c^\va(t)\|_{L^\infty}
\cdot\va\int_0^T\int_a^b r^{n-1}\frac{(c^\va_{rr})^2}{c^\va} drdt\\
\leq &C_0\kappa\lambda\va(\|c_0\|_{L^\infty}+\lambda)
\int_0^T\left\{b^{n-1}\frac{[c^\va_r(b,t)]^2}{[c^\va(b,t)]^2}
+a^{n-1}\frac{[c^\va_r(a,t)]^2}{[c^\va(a,t)]^2}
\right\}dt\\
&+C_0\va(\|c_0\|_{L^\infty}+\lambda)
\int_0^T\int_a^b  r^{n-1}c^\va[(\log c^\va)_{rr}]^2drdt\\
&+C_0\va (\|c_0\|_{L^\infty}+\lambda)^{\frac{4}{3}}\\
\leq& C_{\lambda}(1+e^{C_{\lambda}\ka^2}),
\end{split}
\enn
which, along with the second equation of \eqref{e1}, \eqref{a9}, \eqref{a10}, \eqref{a1} and the fact $0<\va<1$ gives rise to
\be\label{a17}
\begin{split}
\int_0^T\int_a^b r^{n-1} (c^\va_t)^2drdt
\leq &C_0\va^2\int_0^T\int_a^b r^{n-1} (c^{\va}_{rr})^2drdt
+C_0(n-1)\va^2\int_0^T\int_a^b r^{n-3} (c^\va_r)^2 drdt\\
&+C_0\int_0^T\int_a^b r^{n-1} (w^\va)^2drdt\cdot\sup_{t>0}\|c^\va(t)\|_{L^\infty}^2\\
\leq& C_{\lambda}(1+\exp\{\exp\{C_{\lambda}\ka^2\}\}).
\end{split}
\ee
Collecting \eqref{a9}, \eqref{a10} and \eqref{a17} one derives \eqref{b18}.

 We proceed to proving \eqref{b19}. Similar to the derivation of \eqref{a9}, \eqref{b25} and \eqref{b17} lead to
 \be\label{b21}
\begin{split}
\sup_{0<t\leq T}\int_a^b r^{n-1}(c^\va_r)^2(t)dr
\leq 4\left[\sup_{0<t\leq T}\int_a^b r^{n-1}(\pt_r\sq{c^\va})^2(t)dr
\right]\cdot
\left[\sup_{t>0}\|c^\va(t)\|_{L^\infty}
\right]
\leq C.
\end{split}
\ee
Applying Gronwall's inequality to \eqref{b20} and using
 \eqref{b21} one gets
\be\label{b22}
\sup_{0<t\leq T}\int_{a}^b r^{n-1}(w^\va)^2(t)dr
+\int_0^T\int_{a}^b r^{n-1}(w^\va_r)^2drdt\leq C.
\ee
Similar to the derivation of \eqref{b49}, the Cauchy-Schwarz inequality and \eqref{b25} entail that
\be\label{b52}
-2(n-1)\int_a^b r^{n-2}\frac{(c^\va_r)^3}{(c^\va)^2}dr
\leq \frac{1}{2}\int_a^b r^{n-1}
\frac{(c^\va_r)^4}{(c^\va)^3}dr
+2C_3 \|c_0\|_{L^\infty}^{\frac{1}{3}}.
\ee
Since $\kappa=0$, from \eqref{c4} one deduces that
\be\label{b51}
2\left[r^{n-1}\frac{(c^\va_r)^3}{(c^\va)^3}\cdot c^\va
\right]\Big|_{r=a}^{r=b}=0.
\ee
Substituting \eqref{b52}, \eqref{b51} and \eqref{b50} into \eqref{b47}, one gets
\be\label{b53}
\begin{split}
\int_a^b r^{n-1} \frac{(c^\va_r)^4}{(c^\va)^3}dr
\leq
C_2\int_a^b r^{n-1}c^\va [(\log c^\va)_{rr}]^2dr
+2C_3\|c_0\|_{L^\infty}^{\frac{1}{3}}
\end{split}
\ee
for each $t>0$, where the constant $C_2$ and $C_3$ are as in \eqref{b50} and \eqref{b49}, respectively. Plugging \eqref{b53} into \eqref{b48} to have
\ben
\begin{split}
\int_a^b r^{n-1} \frac{(c^\va_{rr})^2}{c^\va} dr
\leq 2(C_2+1)\int_a^b  r^{n-1}c^\va[(\log c^\va)_{rr}]^2dr
+4C_3\|c_0\|_{L^\infty}^{\frac{1}{3}},
\end{split}
\enn
which, along with \eqref{b25}, \eqref{b17} and the fact $0<\va<1$ indicates
\be\label{c3}
\begin{split}
\va\int_0^T\int_a^b r^{n-1} (c^\va_{rr})^2 drdt
\leq&\sup_{t>0}\|c^\va(t)\|_{L^\infty}\cdot2(C_2+1)\va\int_0^T\int_a^b  r^{n-1}c^\va[(\log c^\va)_{rr}]^2dr\\
&+\sup_{t>0}\|c^\va(t)\|_{L^\infty}\cdot4C_3\va \|c_0\|_{L^\infty}^{\frac{1}{3}}\\
\leq& C.
\end{split}
\ee
Then it follows from the second equation of \eqref{e1}, \eqref{b25}, \eqref{b21}, \eqref{b22}, \eqref{c3} and the fact $0<\va<1$ that
\ben
\begin{split}
\int_0^T\int_a^b r^{n-1} (c^\va_t)^2drdt
\leq &C_0\va^2\int_0^T\int_a^b r^{n-1} (c^{\va}_{rr})^2drdt
+C_0(n-1)\va^2\int_0^T\int_a^b r^{n-3} (c^\va_r)^2 drdt\\
&+C_0\int_0^T\int_a^b r^{n-1} (w^\va)^2drdt\cdot\sup_{t>0}\|c^\va(t)\|_{L^\infty}^2\\
\leq& C,
\end{split}
\enn
which, along with \eqref{b21} and \eqref{b22} gives \eqref{b19}.
 The proof is completed.

\end{proof}

\begin{lemma}\label{l3} Let the assumptions in Theorem \ref{t1} hold and let $(w^\va,c^\va)$ with $0<\va<1$ be the global solutions of \eqref{e1}-\eqref{e2}, derived in Lemma \ref{l9}. \\
(i) If $\kappa>0$.
Then for any $0<T<\infty$, there exists a constant $C_{\lambda}$ depending on $a$, $b$, $n$, $\|w_0\|_{H^2}$, $\|c_0\|_{H^2}$, $T$ and $\lambda$ such that
\be\label{c5}
\begin{split}
&\sup_{0<t\leq T} \left[\int_a^b r^{n-1} (w^\va_t)^2(t) dr
+\int_a^b r^{n-1} (w^\va_r)^2(t) dr
\right]\\
&\ \ \ +\int_0^T\int_a^b r^{n-1}(c^\va_{rt})^2drdt
+\int_0^T\int_a^b r^{n-1}(w^\va_{rt})^2 drdt
\leq C_{\lambda}(1+\exp\{\exp\{\exp\{C_{\lambda}\ka^2\}\}\}).
\end{split}
 \ee
 (ii) If $\kappa=0$.
Then for any $0<T<\infty$, there exists a constant $C$ depending on $a$, $b$, $n$, $\|w_0\|_{H^2}$, $\|c_0\|_{H^2}$ and $T$ such that
\be\label{c6}
\begin{split}
\sup_{0<t\leq T}& \left[\int_a^b r^{n-1} (w^\va_t)^2(t) dr
+\int_a^b r^{n-1} (w^\va_r)^2(t) dr
\right]\\
&
+\int_0^T\int_a^b r^{n-1}(c^\va_{rt})^2drdt
+\int_0^T\int_a^b r^{n-1}(w^\va_{rt})^2 drdt
\leq C.
\end{split}
 \ee
\end{lemma}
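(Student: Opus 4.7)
The plan is to derive an energy identity for $\|w^\va_t\|_{L^2}^2$ by differentiating the first equation of \eqref{e1} in $t$ and testing against $w^\va_t$, and then close a Gronwall-type inequality using the \emph{a priori} bounds from Lemmas \ref{l1}--\ref{l2}. First I would rewrite the $w$-equation in divergence form $r^{n-1}w_t=[r^{n-1}(w_r-wc_r)]_r$ and differentiate in $t$ to obtain $r^{n-1}w_{tt}=[r^{n-1}(w_{rt}-w_t c_r-wc_{rt})]_r$. Since the flux boundary conditions $[w_r-wc_r]|_{r=a,b}=0$ hold for all $t>0$, their $t$-derivatives give $[w_{rt}-w_t c_r-wc_{rt}]|_{r=a,b}=0$, so testing by $w_t$ and integrating over $(a,b)$ makes the boundary contribution vanish, producing
\begin{equation*}
\frac{1}{2}\frac{d}{dt}\int_a^b r^{n-1}w_t^2\,dr+\int_a^b r^{n-1}w_{rt}^2\,dr=\int_a^b r^{n-1}(w_t c_r+wc_{rt})w_{rt}\,dr.
\end{equation*}

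Second, I need an auxiliary $L^\infty$-bound for $w^\va$. Integrating the divergence form of the $w$-equation from $a$ to $r$ yields $r^{n-1}(w_r-wc_r)(r,t)=\int_a^r s^{n-1}w_t(s,t)\,ds$, hence $\|w_r\|_{L^2}\leq\|w\|_{L^\infty}\|c_r\|_{L^2}+C\|w_t\|_{L^2}$. Combining this with the Gagliardo--Nirenberg interpolation $\|w\|_{L^\infty}\leq C(\|w\|^{1/2}\|w_r\|^{1/2}+\|w\|)$ and the uniform bounds on $\|w\|_{L^2}$ and $\|c_r\|_{L^2}^2\leq C_\lambda(1+e^{C_\lambda\ka^2})$ from Lemmas \ref{l1}--\ref{l2}, and absorbing the $\|w\|_{L^\infty}$ on one side gives $\|w\|_{L^\infty}^2\leq C_\lambda(1+e^{C_\lambda\ka^2})+C\|w_t\|_{L^2}$. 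On the RHS of the energy identity, Young's inequality together with $\|w_t\|_{L^\infty}\leq C(\|w_t\|^{1/2}\|w_{rt}\|^{1/2}+\|w_t\|)$ yields, for the $w_t c_r w_{rt}$ contribution, a Gronwall coefficient $C_\lambda(\|c_r\|^2+\|c_r\|^4)\leq C_\lambda(1+e^{2C_\lambda\ka^2})$ in front of $\|w_t\|^2$, while the $wc_{rt}w_{rt}$ contribution is controlled by $\tfrac14\|w_{rt}\|^2+C\|w\|_{L^\infty}^2\|c_{rt}\|^2$.

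To control $\int_0^T\|c_{rt}\|_{L^2}^2\,dt$ uniformly in $\va$, I would differentiate the second equation of \eqref{e1} in $r$, obtaining $c_{rt}=\va c_{rrr}+\va\tfrac{n-1}{r}c_{rr}-\va\tfrac{n-1}{r^2}c_r-w_r c-wc_r$; the low-order pieces $w_r c+wc_r$ are controlled in $L^2(0,T;L^2)$ via $\int_0^T\|w_r\|^2$ from Lemma \ref{l2} and the $L^\infty$-bound on $w$, while the $\va c_{rr}$ and $\va c_r$ terms are handled directly using the $c$-equation together with $\int_0^T\|c_t\|^2\,dt$ from Lemma \ref{l2}; the remaining $\va c_{rrr}$ contribution is bounded via an auxiliary higher-order energy estimate for $c^\va$ obtained by testing a suitable form of the $r$-differentiated equation and exploiting both the Robin structure $c_{rt}(a,t)=\ka c_t(a,t),\,c_{rt}(b,t)=-\ka c_t(b,t)$ and the bound on $\va\int_0^T\int c^\va[(\log c^\va)_{rr}]^2$ from Lemma \ref{l1}. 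Plugging everything into the energy identity and combining with $\|w\|_{L^\infty}^2\leq A+C\|w_t\|$ (with $A=C_\lambda(1+e^{C_\lambda\ka^2})$), a Gronwall argument closes the inequality; the three nested layers of exponentials arise from (i) the $e^{C_\lambda\ka^2}$ in the $\|c_r\|_{L^2}$ bound, (ii) the $e^{A^2}$ Gronwall factor, and (iii) the bootstrap through the $\|w\|_{L^\infty}$ coupling in the $c_{rt}$ term, yielding the stated triple-exponential bound; $\|w_t(0)\|_{L^2}$ is bounded in terms of $\|w_0\|_{H^2},\|c_0\|_{H^2}$ via the PDE at $t=0$, and $\sup\|w_r\|^2$ follows from the integral representation above applied pointwise in $t$. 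The case $\ka=0$ is analogous but simpler since the boundary trace terms vanish and all a priori constants from Lemmas \ref{l1}--\ref{l2} reduce to $C$ independent of $\ka$. The main obstacle is the uniform-in-$\va$ control of $\int_0^T\|c_{rt}\|^2$, specifically the $\va c_{rrr}$ part, which requires a delicate higher-order estimate tailored to the Robin boundary data.
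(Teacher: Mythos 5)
Your treatment of the $w$-part matches the paper's: multiply the first equation by $r^{n-1}w^\va_t$ to control $\frac{d}{dt}\|w^\va_r\|^2+\|w^\va_t\|^2$, differentiate in $t$ and test with $r^{n-1}w^\va_t$ to control $\frac{d}{dt}\|w^\va_t\|^2+\|w^\va_{rt}\|^2$, then close by Gronwall using Lemmas \ref{l1}--\ref{l2}. The difficulty you correctly identify --- the uniform-in-$\va$ bound on $\int_0^T\|c^\va_{rt}\|_{L^2}^2\,dt$ --- is, however, exactly the step your proposal does not actually supply, and the route you sketch for it would fail. Decomposing $c^\va_{rt}=\va c^\va_{rrr}+\va\frac{n-1}{r}c^\va_{rr}-\va\frac{n-1}{r^2}c^\va_r-(w^\va c^\va)_r$ and estimating term by term requires a uniform bound on $\va^2\int_0^T\|c^\va_{rrr}\|_{L^2}^2\,dt$. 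The quantity you invoke from Lemma \ref{l1}, $\va\int_0^T\int r^{n-1}c^\va[(\log c^\va)_{rr}]^2$, controls only second derivatives of $c^\va$ and gives nothing on $c^\va_{rrr}$; and the only third-derivative estimate in the paper, $\va^{3/2}\|c^\va_{rrr}\|_{L^2(0,T;L^2)}^2\le C$, is Lemma \ref{l4}, whose proof uses the conclusions \eqref{c5}--\eqref{c6} of the present lemma (through $\|w^\va_t\|_{L^2}$ and $\|c^\va_t\|_{H^1}$), so appealing to it here would be circular.

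The paper avoids the issue entirely: it tests the \emph{undifferentiated} second equation of \eqref{e1} against $(r^{n-1}c^\va_r)_{rt}$. Since $\va c^\va_{rr}+\va\frac{n-1}{r}c^\va_r=\frac{\va}{r^{n-1}}(r^{n-1}c^\va_r)_r$, this produces the identity \eqref{a11},
\begin{equation*}
\frac{\va}{2}\frac{d}{dt}\int_a^b\frac{[(r^{n-1}c^\va_r)_r]^2}{r^{n-1}}\,dr
+\int_a^b r^{n-1}(c^\va_{rt})^2\,dr
=\big[r^{n-1}c^\va_{rt}c^\va_t\big]\big|_{a}^{b}
+\big[r^{n-1}c^\va_{rt}w^\va c^\va\big]\big|_{a}^{b}
-\int_a^b r^{n-1}c^\va_{rt}(w^\va c^\va)_r\,dr,
\end{equation*}
in which $\int r^{n-1}(c^\va_{rt})^2$ appears directly as the dissipation, the $\va$-weighted quantity $\int\frac{[(r^{n-1}c^\va_r)_r]^2}{r^{n-1}}$ plays the role of the energy (only its nonnegativity and initial value are needed), and the boundary terms are closed with the Robin structure $c^\va_{rt}(a,t)=\kappa c^\va_t(a,t)$, $c^\va_{rt}(b,t)=-\kappa c^\va_t(b,t)$ together with Gagliardo--Nirenberg. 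No estimate on $c^\va_{rrr}$ is ever needed. If you instead insist on the $r$-differentiated equation, the viable version of your idea is to test it with $r^{n-1}c^\va_{rt}$ and integrate the $\va c^\va_{rrr}$ term by parts so that it becomes $\frac{\va}{2}\frac{d}{dt}\int r^{n-1}(c^\va_{rr})^2$ plus boundary terms --- i.e.\ essentially the paper's computation --- rather than trying to bound $\va\|c^\va_{rrr}\|_{L^2}$ as a source term. As written, your proposal leaves the central estimate of the lemma unproved.
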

\begin{proof}
We first prove \eqref{c5}. Taking the $L^2$ inner product of the second equation of \eqref{e1} with $(r^{n-1}c_r^\va)_{rt}$ and using integration by parts, one gets
\be\label{a11}
\begin{split}
\frac{1}{2}\va\frac{d}{dt}\int_a^b\frac{[(r^{n-1}c^\va_r)_r]^2}{r^{n-1}}dr
+\int_a^b r^{n-1}(c^\va_{rt})^2 dr
=&\left[r^{n-1}c^\va_{rt}c^\va_t\right]\big|_{r=a}^{r=b}
+\left[r^{n-1}c^\va_{rt}w^\va c^\va\right]\big|_{r=a}^{r=b}\\
&-\int_a^br^{n-1}c^\va_{rt}(w^\va c^\va)_r dr.
\end{split}
\ee
It follows form \eqref{e2} that
\ben
[r^{n-1}c^\va_{rt}c^\va_{t}]|_{r=a}^{r=b}
=-\ka \{b^{n-1}[c^\va_t(b,t)]^2+a^{n-1}[c^\va_t(a,t)]^2\},
\enn
which, in conjunction with the Gagliardo-Nirenberg interpolation inequality entails that
\be\label{a12}
\begin{split}
\left[r^{n-1}c^\va_{rt}c^\va_t\right]\big|_{r=a}^{r=b}
\leq C_0\ka \|c^\va_t\|_{L^\infty}^2
\leq& C_0\ka(\|r^{\frac{n-1}{2}}c^\va_t\|^2_{L^2}
+\|r^{\frac{n-1}{2}}c^\va_t\|_{L^2}\|r^{\frac{n-1}{2}}c^\va_{rt}\|_{L^2})\\
\leq & \frac{1}{4}\|r^{\frac{n-1}{2}}c^\va_{rt}\|^2_{L^2}
+C_0(\ka+\ka^2)\|r^{\frac{n-1}{2}}c^\va_t\|^2_{L^2}.
\end{split}
\ee
\eqref{e2} further leads to
\ben
\begin{split}
[r^{n-1}c^\va_{rt}w^\va c^\va]|_{r=a}^{r=b}
=-\kappa [b^{n-1}(c^\va_t w^\va c^\va)(b,t)
+a^{n-1}(c^\va_t w^\va c^\va)(a,t)],
\end{split}
\enn
which, along with the Sobolev embedding inequality and \eqref{a1} gives
\be\label{a13}
\begin{split}
\big[&r^{n-1}c^\va_{rt} w^\va c^\va\big]\big|_{r=a}^{r=b}\\
&\leq C_0\ka\|c^\va\|_{L^\infty}\|c^\va_t\|_{L^\infty}
\|w^\va\|_{L^\infty}\\
&\leq C_0(\|c_0\|_{L^\infty}+\lambda)
\ka(\|r^{\frac{n-1}{2}}c^\va_t\|_{L^2}
+\|r^{\frac{n-1}{2}}c^\va_{rt}\|_{L^2})
(\|r^{\frac{n-1}{2}}w^\va\|_{L^2}
+\|r^{\frac{n-1}{2}}w^\va_{r}\|_{L^2})\\
&\leq \frac{1}{4}\|r^{\frac{n-1}{2}}c^\va_{rt}\|^2_{L^2}
+\|r^{\frac{n-1}{2}}c^\va_t\|_{L^2}^2
+C_0(\|c_0\|_{L^\infty}+\lambda)^2\ka^2
(\|r^{\frac{n-1}{2}}w^\va\|^2_{L^2}
+\|r^{\frac{n-1}{2}}w^\va_{r}\|^2_{L^2}).
\end{split}
\ee
By \eqref{a1} and the Sobolev embedding inequality one deduces
\be\label{a14}
\begin{split}
-\int_a^b&r^{n-1}c^\va_{rt}(w^\va c^\va)_r dr\\
=&-\int_a^b r^{n-1}c^\va_{rt}w^\va_r c^\va dr
-\int_a^b r^{n-1}c^\va_{rt}w^\va c^\va_r dr\\
\leq& \|c^\va\|_{L^\infty}\|r^{\frac{n-1}{2}}w^\va_r\|_{L^2}
\|r^{\frac{n-1}{2}}c^\va_{rt}\|_{L^2}
+\|w^\va\|_{L^\infty}\|r^{\frac{n-1}{2}}c^\va_r\|_{L^2}
\|r^{\frac{n-1}{2}}c^\va_{rt}\|_{L^2}\\
\leq& \frac{1}{4}\|r^{\frac{n-1}{2}}c^\va_{rt}\|^2_{L^2}
+C_0(\|c_0\|_{L^\infty}+\lambda)^2
\|r^{\frac{n-1}{2}}w^\va_r\|^2_{L^2}\\
&+C_0\|r^{\frac{n-1}{2}}c^\va_r\|^2_{L^2}
(\|r^{\frac{n-1}{2}}w^\va\|^2_{L^2}
+\|r^{\frac{n-1}{2}}w^\va_r\|^2_{L^2}).
\end{split}
\ee
Substituting \eqref{a12}-\eqref{a14} into \eqref{a11}, we obtain
 \ben
\begin{split}
\frac{1}{2}&\va\frac{d}{dt}\int_a^b\frac{[(r^{n-1}c^\va_r)_r]^2}{r^{n-1}}dr
+\frac{1}{4}\int_a^b r^{n-1}(c^\va_{rt})^2 dr\\
&\leq
C_0(1+\ka+\ka^2)\|r^{\frac{n-1}{2}}c^\va_t\|_{L^2}^2\\
&\ \ \ +C_0[(\|c_0\|_{L^\infty}+\lambda)^2 \ka^2+\|r^{\frac{n-1}{2}}c^\va_r\|_{L^2}^2]
(\|r^{\frac{n-1}{2}}w^\va\|_{L^2}^2
+\|r^{\frac{n-1}{2}}w^\va_r\|_{L^2}^2).
\end{split}
\enn
 Integrating this inequality with respect to $t$ and using \eqref{b18}, we arrive at
\be\label{a18}
\va\sup_{0<t\leq  T}\int_a^b\frac{[(r^{n-1}c^\va_r(t))_r]^2}{r^{n-1}}dr
+\int_0^T\int_a^b r^{n-1}(c^\va_{rt})^2 drdt
\leq C_{\lambda}(1+\exp\{\exp\{C_{\lambda}\ka^2\}\}).
\ee
Multiplying the first equation of \eqref{e1} with $r^{n-1}w^\va_t$ and using integration by parts to have
\be\label{a19}
\begin{split}
\frac{1}{2}\frac{d}{dt}&\int_a^b r^{n-1} (w^\va_r)^2 dr
+\int_a^b r^{n-1} (w^\va_t)^2 dr\\
=&\int_a^b r^{n-1}w^\va c^\va_r w^\va_{rt}dr\\
\leq &\|r^{\frac{n-1}{2}}w^\va_{rt}\|_{L^2} \|w^\va\|_{L^\infty}
\|r^{\frac{n-1}{2}}c^\va_r\|_{L^2}\\
\leq &\frac{1}{4}\|r^{\frac{n-1}{2}}w^\va_{rt}\|^2_{L^2}
+C_0
\|r^{\frac{n-1}{2}}c^\va_r\|^2_{L^2}(\|r^{\frac{n-1}{2}}w^\va\|^2_{L^2}
+\|r^{\frac{n-1}{2}}w^\va_r\|^2_{L^2}),
\end{split}
\ee
where in the last inequality we have used the Sobolev embedding inequality.
Differentiating the first equation of \eqref{e1} with respect to $t$ and testing the resulting equation with $r^{n-1}w^\va_t$, one gets from integration by parts that
\be\label{a20}
\begin{split}
\frac{1}{2}\frac{d}{dt}&\int_a^b r^{n-1} (w^\va_t)^2 dr
+\int_a^b r^{n-1} (w^\va_{rt})^2 dr\\
=&\int_a^b r^{n-1}(w^\va c^\va_r)_t w^\va_{rt}dr\\
\leq & \|r^{\frac{n-1}{2}}c^\va_{rt}\|_{L^2}\|w^\va\|_{L^\infty}
\|r^{\frac{n-1}{2}}w^\va_{rt}\|_{L^2}
+\|r^{\frac{n-1}{2}}c^\va_{r}\|_{L^2}\|w^\va_t\|_{L^\infty}
\|r^{\frac{n-1}{2}}w^\va_{rt}\|_{L^2}\\
\leq & C_0\|r^{\frac{n-1}{2}}c^\va_{rt}\|_{L^2}\|w^\va\|_{H^1}
\|r^{\frac{n-1}{2}}w^\va_{rt}\|_{L^2}\\
&+C_0\|r^{\frac{n-1}{2}}c^\va_{r}\|_{L^2}
(\|r^{\frac{n-1}{2}}w^\va_t\|_{L^2}
+\|r^{\frac{n-1}{2}}w^\va_t\|^{\frac{1}{2}}_{L^2}
\|r^{\frac{n-1}{2}}w^\va_{rt}\|^{\frac{1}{2}}_{L^2})
\|r^{\frac{n-1}{2}}w^\va_{rt}\|_{L^2}\\
\leq &\frac{1}{4}\|r^{\frac{n-1}{2}}w^\va_{rt}\|^2_{L^2}
+C_0\|r^{\frac{n-1}{2}}c^\va_{rt}\|_{L^2}^2(\|r^{\frac{n-1}{2}}w^\va\|_{L^2}^2
+\|r^{\frac{n-1}{2}}w^\va_r\|_{L^2}^2)\\
&+C_0(\|r^{\frac{n-1}{2}}c^\va_r\|_{L^2}^2
+\|r^{\frac{n-1}{2}}c^\va_r\|_{L^2}^4)
\|r^{\frac{n-1}{2}}w^\va_t\|_{L^2}^2,
\end{split}
\ee
 where in the second inequality we have used the Sobolev embedding inequality and Gagliardo-Nirenberg interpolation inequality.
  Adding \eqref{a20} to \eqref{a19} we have
 \be\label{c9}
\begin{split}
\frac{d}{dt}\big[&\int_a^b r^{n-1} (w^\va_r)^2 dr
+\int_a^b r^{n-1} (w^\va_t)^2 dr\big]
+\int_a^b r^{n-1} (w^\va_t)^2 dr
+\int_a^b r^{n-1} (w^\va_{rt})^2 dr\\
\leq &
C_0(\|r^{\frac{n-1}{2}}c^\va_{r}\|_{L^2}^2+\|r^{\frac{n-1}{2}}c^\va_{rt}\|_{L^2}^2)
(\|r^{\frac{n-1}{2}}w^\va\|_{L^2}^2
+\|r^{\frac{n-1}{2}}w^\va_r\|_{L^2}^2)\\
&+C_0(\|r^{\frac{n-1}{2}}c^\va_r\|_{L^2}^2
+\|r^{\frac{n-1}{2}}c^\va_r\|_{L^2}^4)
\|r^{\frac{n-1}{2}}w^\va_t\|_{L^2}^2.
\end{split}
\ee
Then applying Gronwall's inequality to \eqref{c9}, using \eqref{b18} and \eqref{a18} one derives
 \ben
 \begin{split}
\sup_{0<t\leq T}& \left[\int_a^b r^{n-1} (w^\va_t)^2(t) dr
+\int_a^b r^{n-1} (w^\va_r)^2(t) dr\right]
+\int_0^T\int_a^b r^{n-1} (w^\va_{rt})^2 drdt\\
\leq & C_{\lambda}(1+\exp\{\exp\{\exp\{C_{\lambda}\ka^2\}\}\}),
\end{split}
 \enn
 which, in conjunction with \eqref{a18} gives \eqref{c5}.

   We proceed to proving \eqref{c6}. When $\kappa=0$, it follows from \eqref{c4} that
\be\label{c8}
   \left[r^{n-1}c^\va_{rt}c^\va_t\right]\big|_{r=a}^{r=b}=0,
\quad \left[r^{n-1}c^\va_{rt}w^\va c^\va\right]\big|_{r=a}^{r=b}=0,\ \ \  \forall \ t\geq 0.
\ee
Similar to the derivation of \eqref{a14}, \eqref{b25} and the Sobolev embedding inequality leads to
\be\label{b54}
\begin{split}
-\int_a^b&r^{n-1}c^\va_{rt}(w^\va c^\va)_r dr\\
\leq& \frac{1}{4}\|r^{\frac{n-1}{2}}c^\va_{rt}\|^2_{L^2}
+C_0\|c_0\|_{L^\infty}^2
\|r^{\frac{n-1}{2}}w^\va_r\|^2_{L^2}\\
&+C_0\|r^{\frac{n-1}{2}}c^\va_r\|^2_{L^2}
(\|r^{\frac{n-1}{2}}w^\va\|^2_{L^2}
+\|r^{\frac{n-1}{2}}w^\va_r\|^2_{L^2}).
\end{split}
\ee
Substituting \eqref{b54} into \eqref{a11} and using \eqref{c8}, one gets
 \ben
\begin{split}
\frac{1}{2}&\va\frac{d}{dt}\int_a^b\frac{[(r^{n-1}c^\va_r)_r]^2}{r^{n-1}}dr
+\frac{3}{4}\int_a^b r^{n-1}(c^\va_{rt})^2 dr\\
&\leq C_0\|c_0\|_{L^\infty}^2
\|r^{\frac{n-1}{2}}w^\va_r\|^2_{L^2}
+C_0\|r^{\frac{n-1}{2}}c^\va_r\|^2_{L^2}
(\|r^{\frac{n-1}{2}}w^\va\|^2_{L^2}
+\|r^{\frac{n-1}{2}}w^\va_r\|^2_{L^2}).
\end{split}
\enn
Integrating the above inequality with respect to $t$ and using \eqref{b19} we have
\be\label{c7}
\va\sup_{0<t\leq T}\int_a^b\frac{[(r^{n-1}c^\va_r(t))_r]^2}{r^{n-1}}dr
+\int_0^T\int_a^b r^{n-1}(c^\va_{rt})^2 drdt
\leq C.
\ee
Applying Gronwall's inequality to \eqref{c9} and using \eqref{b19} and \eqref{c7} one derives
 \be\label{c10}
 \begin{split}
\sup_{0<t\leq T} \left[\int_a^b r^{n-1} (w^\va_t)^2(t) dr
+\int_a^b r^{n-1} (w^\va_r)^2(t) dr\right]
+\int_0^T\int_a^b r^{n-1} (w^\va_{rt})^2 drdt
\leq  C.
\end{split}
 \ee
 Collecting \eqref{c7} and \eqref{c10} we end up with \eqref{c6}.
  The proof is finished.

 \end{proof}

 \begin{lemma}\label{l4} Let the assumptions in Theorem \ref{t1} hold and let $(w^\va,c^\va)$ with $0<\va<1$ be the solutions of \eqref{e1}-\eqref{e2}, derived in Lemma \ref{l9}. \\
 (i) If $\kappa>0$.
 Then for any $0<T<\infty$, there exists a constant $C_{\lambda}$ depending on $a$, $b$, $n$, $\|w_0\|_{H^2}$, $\|c_0\|_{H^2}$, $T$ and $\lambda$ such that
 \be\label{c11}
\begin{split}
\sup_{0<t\leq T} \va^\frac{1}{2} \int_a^b (c^\va_{rr})^2(t)dr
+\va^{\frac{3}{2}}\int_0^T \int_a^b (c^\va_{rrr})^2drdt
\leq C_{\lambda}\left(1+\exp\{\exp\{\exp\{C_{\lambda}\ka^2\}\}\}\right).
\end{split}
 \ee
 (ii) If $\kappa=0$.
 Then for any $0<T<\infty$, there exists a constant $C$ depending on $a$, $b$, $n$, $\|w_0\|_{H^2}$, $\|c_0\|_{H^2}$ and $T$ such that
 \be\label{c12}
\begin{split}
\sup_{0<t\leq T} \int_a^b (c^\va_{rr})^2(t)dr
+\va\int_0^T \int_a^b (c^\va_{rrr})^2drdt
\leq C.
\end{split}
 \ee
 \end{lemma}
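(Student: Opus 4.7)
The plan is to derive an energy estimate for $c^\va_{rr}$ by testing the $r$-derivative of the second equation of \eqref{e1} against an appropriately weighted multiple of $c^\va_{rrr}$, and then to handle the resulting boundary contributions using the Robin condition together with the trace form of the Gagliardo--Nirenberg inequality and the bounds obtained in Lemmas \ref{l1}--\ref{l3}.

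Concretely, I differentiate the $c$-equation in $r$ to obtain
\ben
c^\va_{rt}=\va c^\va_{rrr}+\va\frac{n-1}{r}c^\va_{rr}-\va\frac{n-1}{r^2}c^\va_{r}-(w^\va c^\va)_r.
\enn
Multiplying by $\va^{1/2}r^{n-1}c^\va_{rrr}$ in case (i) (respectively by $r^{n-1}c^\va_{rrr}$ in case (ii)), integrating over $(a,b)$, and applying integration by parts to the $c^\va_{rt}c^\va_{rrr}$ product produces the master identity
\ben
\tfrac{1}{2}\va^{1/2}\frac{d}{dt}\int_a^b r^{n-1}(c^\va_{rr})^2\,dr+\va^{3/2}\int_a^b r^{n-1}(c^\va_{rrr})^2\,dr=\va^{1/2}\bigl[r^{n-1}c^\va_{rt}c^\va_{rr}\bigr]_a^b+\mathcal{R},
\enn
in which $\mathcal{R}$ collects the lower-order interior contributions involving $c^\va_r$, $c^\va_{rr}$, $c^\va_{rt}$ and $(w^\va c^\va)_r$ together with sufficient powers of $\va^{1/2},\va^{3/2}$; these are all controlled uniformly in $\va$ using \eqref{a9}, \eqref{a18}, \eqref{b18} and \eqref{c5}.

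For case (ii) with $\ka=0$, the surface term vanishes identically since \eqref{c4} forces $c^\va_r(a,t)=c^\va_r(b,t)=0$ and hence $c^\va_{rt}(a,t)=c^\va_{rt}(b,t)=0$. Dropping the (now superfluous) $\va^{1/2}$-weight and invoking Gronwall's inequality, together with the initial regularity $c_0\in H^2$, immediately yields \eqref{c12}. In case (i), $\ka>0$, differentiating the Robin conditions in $t$ gives $c^\va_{rt}(a,t)=\ka c^\va_t(a,t)$ and $c^\va_{rt}(b,t)=-\ka c^\va_t(b,t)$, so the surface term reduces to $-\ka\va^{1/2}\bigl[a^{n-1}c^\va_t(a,t)c^\va_{rr}(a,t)+b^{n-1}c^\va_t(b,t)c^\va_{rr}(b,t)\bigr]$. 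I would bound the pointwise traces through the one-dimensional inequality $\|f\|_{L^\infty}\leq C_0(\|f\|_{L^2}+\|f\|_{L^2}^{1/2}\|f_r\|_{L^2}^{1/2})$ applied to both $c^\va_t$ and $c^\va_{rr}$; Young's inequality then splits this contribution into a piece that is a small multiple of $\va^{3/2}\|c^\va_{rrr}\|_{L^2}^2$ (absorbed into the dissipation on the LHS), a piece of the form $A(t)\cdot\va^{1/2}\|c^\va_{rr}\|_{L^2}^2$ with $A\in L^1(0,T)$ depending on $\ka$, and a piece controlled by $\int_0^T c^\va_t(a,t)^2\,dt$, which in turn is bounded via the trace inequality and the already-established estimates for $c^\va_t,c^\va_{rt}$. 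A final Gronwall step then delivers the asserted bound \eqref{c11}.

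The main obstacle is the surface contribution in case (i): one must select the correct $\va$-weight ($\va^{1/2}$ rather than $\va$) so that the Gagliardo--Nirenberg trace control of $c^\va_{rr}$ on $\{a,b\}$ is compatible with the dissipation $\va^{3/2}\|c^\va_{rrr}\|_{L^2}^2$, while simultaneously tracking the $\ka$-dependence carefully through \eqref{c5}. Because \eqref{c5} already carries a double exponential in $\ka^2$, one additional Gronwall step at this higher-regularity level produces the third nested exponential appearing on the right-hand side of \eqref{c11}.
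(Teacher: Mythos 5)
Your overall strategy --- testing the $r$-differentiated $c$-equation against a suitably weighted multiple of $c^\va_{rrr}$, converting the boundary term via the time-differentiated Robin conditions $c^\va_{rt}(a,t)=\ka c^\va_t(a,t)$, $c^\va_{rt}(b,t)=-\ka c^\va_t(b,t)$, and closing with Gagliardo--Nirenberg trace interpolation, Young's inequality and Gronwall --- is exactly the paper's (the paper uses the unweighted integral and the multiplier $-\va c^\va_{rrr}$, then divides by $\va^{1/2}$ at the end; that difference, and your extra $r^{n-1}$ weight, are cosmetic since $a>0$). The boundary term $\va[c^\va_{rt}c^\va_{rr}]|_{r=a}^{r=b}$ is handled in the paper precisely as you describe (see $I_4$ in \eqref{a22} and \eqref{b55}).

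The gap is in your claim that the contribution of $(w^\va c^\va)_r$ sits in a remainder $\mathcal{R}$ of ``lower-order interior contributions'' that are ``controlled uniformly in $\va$.'' With your multiplier this term is $\va^{1/2}\int_a^b r^{n-1}(w^\va c^\va)_r\,c^\va_{rrr}\,dr$, and the only dissipation available to absorb $c^\va_{rrr}$ is $\va^{3/2}\|c^\va_{rrr}\|_{L^2}^2$; Young's inequality therefore leaves a factor $\va^{-1/2}\|(w^\va c^\va)_r\|_{L^2}^2$, which after time integration ruins the claimed rate (in case (ii), where you drop the $\va^{1/2}$ weight, the loss is $\va^{-1}$ and is even worse). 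This term is not lower order and cannot be absorbed as stated; it is the second main obstacle of the lemma, alongside the surface term you do identify. The paper's proof (see $I_7$ in \eqref{a22} and its decomposition \eqref{b59}) integrates by parts once more, writing $\va\int(w^\va c^\va)_r c^\va_{rrr}\,dr=\va[(w^\va c^\va)_rc^\va_{rr}]|_{r=a}^{r=b}-\va\int(w^\va c^\va)_{rr}c^\va_{rr}\,dr$. This costs two further steps absent from your proposal: (a) a second boundary term $J_1$, which again requires the Robin conditions, \eqref{a1} and a trace interpolation, and which contributes another source of size $\ka^2\va^{1/2}\|w^\va\|_{H^1}^2$; and (b) an interior term containing $w^\va_{rr}$, which is not controlled by Lemma \ref{l2} or Lemma \ref{l3} and must be eliminated by substituting the first equation of \eqref{e1} (as in \eqref{b57}), trading $w^\va_{rr}$ for $w^\va_t$, $w^\va_r$ and $\|w^\va c^\va_r\|_{H^1}$. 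Without these two steps the energy inequality does not close at the rate \eqref{c11}--\eqref{c12}.
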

 \begin{proof}
  We first prove \eqref{c11}. Differentiating the second equation of \eqref{e1} with respect to $r$ to have
 \ben
 c^\va_{rt}=\va c^\va_{rrr}+\va \frac{n-1}{r}c^\va_{rr}-\va\frac{n-1}{r^2}c^\va_r-(w^\va c^\va)_r,
 \enn
 which, multiplied with $-\va c^\va_{rrr}$ in $L^2$ gives rise to
 \be\label{a22}
 \begin{split}
 \frac{1}{2}\va \frac{d}{dt}\int_a^b (c^\va_{rr})^2dr+\va^2 \int_a^b (c^\va_{rrr})^2dr
 =&\va [c^\va_{rt}c^\va_{rr}]|_{r=a}^{r=b}-\va^2\int_{a}^b
 \frac{n-1}{r}c^\va_{rr} c^\va_{rrr}dr\\
 &+\va^2\int_a^b\frac{n-1}{r^2}c^\va_r c^\va_{rrr}dr
 +\va\int_a^b (w^\va c^\va)_r c^\va_{rrr}dr\\
 :=&\sum_{k=4}^{k=7}I_{k}.
 \end{split}
 \ee
 It follows from the boundary conditions in \eqref{e2} and Gagliardo-Nirenberg interpolation inequality that
 \be\label{b55}
 \begin{split}
 I_4=&-\va\ka [(c^\va_t c^\va_{rr})(b,t)+(c^\va_t c^\va_{rr})(a,t)]\\
 \leq &2\va\ka \|c^\va_t\|_{L^\infty}
 \|c^\va_{rr}\|_{L^\infty}\\
 \leq& C_0\va\ka\|c^\va_t\|_{H^1}(\|c^\va_{rr}\|_{L^2}
 +\|c^\va_{rr}\|^{\frac{1}{2}}_{L^2}\|c^\va_{rrr}\|^{\frac{1}{2}}_{L^2})\\
 \leq &\frac{1}{8}\va^2 \|c^\va_{rrr}\|^2_{L^2}
 +C_0\va \|c^\va_{rr}\|_{L^2}^2+C_0
 (\va^{\frac{1}{2}}+\va)\ka^2\|c^\va_t\|_{H^1}^2.
 \end{split}
 \ee
 The Cauchy-Schwarz inequality leads to
 \be\label{b56}
 I_5+I_6
 \leq \frac{1}{8}\va^2 \|c^\va_{rrr}\|^2_{L^2}
 +C_0\va^2( \|c^\va_{rr}\|^2_{L^2}+\|c^\va_{r}\|^2_{L^2}).
 \ee
 To bound $I_7$, by integration by parts we first rewrite it as follows:
 \be\label{b59}
 \begin{split}
 I_7=&\va [(w^\va c^\va)_r c^\va_{rr}]|_{r=a}^{r=b}
 -\va\int_a^b (w^\va c^\va)_{rr} c^\va_{rr}dr\\
 =&\va [(w^\va c^\va)_r c^\va_{rr}]|_{r=a}^{r=b}
 -\va\int_a^b w^\va_{rr} c^\va c^\va_{rr}dr
 -2\va\int_a^b w^\va_r c^\va_{r} c^\va_{rr}dr
 -\va\int_a^b w^\va c^\va_{rr} c^\va_{rr}dr\\
 :=&\sum_{k=1}^{k=4}J_k.
 \end{split}
 \ee
 By the boundary conditions in \eqref{e2}, \eqref{a1} and Gagliardo-Nirenberg interpolation inequality, one gets
 \ben
 \begin{split}
 J_1&=\va[w^\va_r c^\va c^\va_{rr}]|_{a}^{b}
 +\va[w^\va c^\va_r c^\va_{rr}]|_{a}^{b}\\
 &=\kappa \va[\lambda-c^{\va}(b,t)][c^{\va}(b,t)+1]
 w^{\va}(b,t)c^{\va}_{rr}(b,t)
 +\kappa \va[\lambda-c^{\va}(a,t)][c^{\va}(a,t)+1]
 w^{\va}(a,t)c^{\va}_{rr}(a,t)\\
 &\leq 2\kappa \va (\lambda+\|c^\va\|_{L^\infty}) (\|c^\va\|_{L^\infty}+1)
 \|w^\va\|_{L^\infty}\|c^\va_{rr}\|_{L^\infty}
 \\
 &\leq C_0\kappa\va (\|c_0\|_{L^\infty}+2\lambda+1)^2 \|w^\va\|_{H^1}(\|c^\va_{rr}\|_{L^2}
 +\|c^\va_{rr}\|^{\frac{1}{2}}_{L^2}\|c^\va_{rrr}\|^{\frac{1}{2}}_{L^2})
 \\
 &\leq \frac{1}{8}\va^2\|c^\va_{rrr}\|^2_{L^2}
 +C_0\va
 \|c^\va_{rr}\|^2_{L^2}+C_0
 (\|c_0\|_{L^\infty}+2\lambda+1)^4\kappa^2
 \big(\va+\va^{\frac{1}{2}}\big)
 \|w^\va\|_{H^1}^2.
 \end{split}
 \enn
 It follows from the first equation of \eqref{e1}, Sobolev embedding inequality and \eqref{a1} that
 \be\label{b57}
 \begin{split}
 J_2
 \leq& C_0 \va\|c^\va\|_{L^\infty} \left(\|w^\va_t\|_{L^2}+\|w^\va_r\|_{L^2}
 +\|w^\va c^\va_r\|_{H^1}\right)\|c^\va_{rr}\|_{L^2}\\
 \leq &C_0\va(\|c_0\|_{L^\infty}+\lambda) \left[\|w^\va_t\|_{L^2}+\|w^\va\|_{H^1}
 +\|w^\va\|_{H^1} \|c^\va_{rr}\|_{L^2}
 +\|w^\va\|_{H^1} \|c^\va_r\|_{L^2}\right]
 \|c^\va_{rr}\|_{L^2}\\
 \leq &C_0(\|c_0\|_{L^\infty}+\lambda)  (\|w^\va\|_{H^1}+1)\cdot\va\|c^\va_{rr}\|^2_{L^2}
 +C_0\va (\|w^\va_t\|^2_{L^2}+\|w^\va\|_{H^1}^2+
 \|w^\va\|_{H^1}^2\|c^\va_r\|^2_{L^2}).
 \end{split}
 \ee
 The Sobolev embedding inequality entails that
 \be\label{b62}
 \begin{split}
 J_3+J_4
 \leq& C_0\va\|w^\va_r\|_{L^2}\|c^\va_r\|_{L^\infty}\|c^\va_{rr}\|_{L^2}
 +C_0\va \|w^\va\|_{L^\infty}\|c^\va_{rr}\|_{L^2}^2\\
 \leq &C_0\va \|w^\va_r\|_{L^2}(\|c^\va_r\|_{L^2}+\|c^\va_{rr}\|_{L^2})\|c^\va_{rr}\|_{L^2}
 +C_0\va \|w^\va\|_{H^1}\|c^\va_{rr}\|^2_{L^2}\\
 \leq &C_0(\|w^\va\|_{H^1}+\|w^\va\|_{H^1}^2)\cdot\va \|c^\va_{rr}\|^2_{L^2}
 +C_0\va\|c^\va_{r}\|^2_{L^2}.
 \end{split}
 \ee
 Collecting the above estimates for $J_1$-$J_4$ we arrive at
 \ben
 \begin{split}
 I_7\leq& \frac{1}{8}\va^2\|c^\va_{rrr}\|^2_{L^2}
 +C_0(\|c_0\|_{L^\infty}+\lambda+1) (\|w^\va\|_{H^1}+\|w^\va\|_{H^1}^2+1)
 \cdot\va\|c^\va_{rr}\|_{L^2}^2\\
 &+C_0\va (\|w^\va_t\|^2_{L^2}+\|w^\va\|_{H^1}^2+
 \|w^\va\|_{H^1}^2\|c^\va_r\|_{L^2}^2)
 +C_0\va\|c^\va_r\|_{L^2}^2\\
 &+C_0(\|c_0\|_{L^\infty}+2\lambda+1)^4
 \kappa^2\big(\va+\va^{\frac{1}{2}}\big)
 \|w^\va\|_{H^1}^2.
 \end{split}
 \enn
 Substituting the above estimates for $I_4$-$I_7$ into \eqref{a22} and using the fact $0<\va<1$ we have
 \be\label{a52}
 \begin{split}
 \va \frac{d}{dt}\int_a^b &(c^\va_{rr})^2dr+\va^2 \int_a^b (c^\va_{rrr})^2dr\\
 &\leq C_0(\|c_0\|_{L^\infty}+\lambda+1) (\|w^\va\|_{H^1}+\|w^\va\|_{H^1}^2+1)
 \cdot\va\|c^\va_{rr}\|^2_{L^2}\\
 &\ \ \ \ +C_0\va (\|w^\va_t\|^2_{L^2}+\|w^\va\|_{H^1}^2+
 \|w^\va\|_{H^1}^2\|c^\va_r\|_{L^2}^2)
 +C_0\va\|c^\va_r\|^2_{L^2}\\
 &\ \ \ \ +C_0(\|c_0\|_{L^\infty}+2\lambda+1)^4\ka^2\va^{\frac{1}{2}}(\|w^\va\|_{H^1}^2+\|c^\va_t\|_{H^1}^2),
 \end{split}
 \ee
 which, along with Gronwall's inequality, \eqref{b18} and \eqref{c5} gives \eqref{c11}.

 We proceed to proving \eqref{c12}.
  Since $\kappa=0$, by \eqref{c4} we know that the boundary term $J_1$ in \eqref{b59} equals to $0$, that is
  \be\label{b60}
  J_1=0.
  \ee
  By a similar procedure used in the derivation of \eqref{b57}, one deduces from \eqref{e1}, \eqref{b25} and the Sobolev embedding inequality that
  \be\label{b61}
 \begin{split}
 J_2
 \leq &C_0\|c_0\|_{L^\infty}  (\|w^\va\|_{H^1}+1)\cdot\va\|c^\va_{rr}\|^2_{L^2}
 +C_0\va (\|w^\va_t\|^2_{L^2}+\|w^\va\|_{H^1}^2+
 \|w^\va\|_{H^1}^2\|c^\va_r\|^2_{L^2}).
 \end{split}
 \ee
 Substituting \eqref{b62}, \eqref{b60}, \eqref{b61} into \eqref{b59} one gets in the case of $\kappa=0$ that
 \be\label{b58}
 \begin{split}
 I_7\leq&
 C_0(\|c_0\|_{L^\infty}+1) (\|w^\va\|_{H^1}+\|w^\va\|_{H^1}^2+1)
 \cdot\va\|c^\va_{rr}\|_{L^2}^2\\
 &+C_0\va (\|w^\va_t\|^2_{L^2}+\|w^\va\|_{H^1}^2+
 \|w^\va\|_{H^1}^2\|c^\va_r\|_{L^2}^2)
 +C_0\va\|c^\va_r\|_{L^2}^2.
 \end{split}
 \ee
 Substituting \eqref{b55}, \eqref{b56}, \eqref{b58} into \eqref{a22} and setting $\kappa=0$ in the resulting inequality one gets
 \ben
 \begin{split}
 \va \frac{d}{dt}\int_a^b (c^\va_{rr})^2dr+\va^2 \int_a^b (c^\va_{rrr})^2dr
 \leq& C_0 (\|c_0\|_{L^\infty}+1) (\|w^\va\|_{H^1}+\|w^\va\|_{H^1}^2+1)
 \cdot\va\|c^\va_{rr}\|^2_{L^2}\\
 &+C_0\va (\|w^\va_t\|^2_{L^2}+\|w^\va\|_{H^1}^2+
 \|w^\va\|_{H^1}^2\|c^\va_r\|_{L^2}^2)
 +C_0\va\|c^\va_r\|^2_{L^2}.
 \end{split}
 \enn
 Applying Gronwall's inequality to the above inequality and using \eqref{b19} and \eqref{c6} we get \eqref{c12}.
  The proof is completed.

\end{proof}

We next prove \eqref{a000} in the following lemma.
\begin{lemma}\label{l5}
Suppose $w_0\in H^2$, $c_0\in H^3$ and the assumptions in Theorem \ref{t1} hold true. Let $(w^0,c^0)$ be the classical solution of \eqref{e1}-\eqref{e2} with $\va=0$, derived in Lemma \ref{l9}. Then for any $0<T<\infty$, there exists a constant $C$ depending on $a$, $b$, $n$, $\|w_0\|_{H^2}$, $\|c_0\|_{H^3}$ and $T$ such that
\ben
\sup_{0<t\leq T}\left[
\|w^0(t)\|_{H^2}^2+\|c^0(t)\|_{H^3}^2
+\|c^0_t(t)\|_{H^2}^2
\right]
+\int_0^T \left[\|w^0(t)\|_{H^3}^2+\|c^0_t(t)\|_{H^3}^2\right]dt\leq C.
\enn
\end{lemma}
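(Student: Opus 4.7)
The plan exploits the key simplification at $\va=0$: the second equation of \eqref{e1} reduces to the pointwise ODE $c^0_t=-w^0 c^0$, so that
\[
c^0(r,t)=c_0(r)\exp\!\left(-\int_0^t w^0(r,\tau)\,d\tau\right).
\]
Writing $W(r,t):=\int_0^t w^0(r,\tau)\,d\tau$, repeated differentiation in $r$ expresses $\partial_r^k c^0$ as a polynomial in $c_0,c_0',c_0'',c_0'''$ and $W,W_r,W_{rr},W_{rrr}$ multiplied by $e^{-W}$; together with the maximum-principle bound $0<c^0\le\|c_0\|_{L^\infty}$ this shows that $\|c^0\|_{L^\infty(0,T;H^k)}$ is controlled by $\|c_0\|_{H^k}$ and $\|W\|_{L^\infty(0,T;H^k)}$ for $k=1,2,3$. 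Since $\|W(t)\|_{H^k}\le\int_0^t\|w^0(\tau)\|_{H^k}\,d\tau\le T^{1/2}\|w^0\|_{L^2(0,T;H^k)}$, the whole lemma reduces to proving that $w^0\in L^\infty(0,T;H^2)\cap L^2(0,T;H^3)$.

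The base estimates $w^0,c^0\in L^\infty(0,T;H^1)$, $c^0_{rr}\in L^\infty(0,T;L^2)$ and $w^0_t\in L^\infty(0,T;L^2)\cap L^2(0,T;H^1)$ are obtained by passing to the limit $\va\to 0$ in the $\ka=0$ uniform bounds \eqref{c19} and \eqref{c6} and identifying the weak-$\ast$ limit with $(w^0,c^0)$ via the uniqueness in Lemma \ref{l9}. To lift $w^0$ to $L^\infty(0,T;H^2)\cap L^2(0,T;H^3)$, I would view the first equation of \eqref{e1} as a linear parabolic equation for $w^0$,
\[
w^0_t-w^0_{rr}-\tfrac{n-1}{r}w^0_r=-w^0_r c^0_r-w^0 c^0_{rr}-\tfrac{n-1}{r}w^0 c^0_r,
\]
subject to the conormal boundary condition $[w^0_r-w^0 c^0_r](a,t)=[w^0_r-w^0 c^0_r](b,t)=0$. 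Differentiating this equation once in $r$, testing against $r^{n-1}w^0_{rrr}$, and integrating by parts produces, after the boundary contributions are absorbed via the conormal condition, a differential inequality of Gronwall type that bounds $\|w^0_{rr}\|_{L^2}^2$ and $\|w^0_{rrr}\|_{L^2}^2$ in terms of $\|c^0\|_{H^2}^2$, $\|w^0\|_{H^1}^2$ and the already-controlled quantities; Gronwall then yields the required $L^\infty_tH^2$ and $L^2_tH^3$ estimates for $w^0$.

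Once this is in hand, the explicit formula for $c^0$ together with the Banach-algebra property of $H^k(a,b)$ for $k\ge 1$ gives $c^0\in L^\infty(0,T;H^3)$. The bounds on $c^0_t=-w^0c^0$ then follow directly from the same algebra property, namely $\|c^0_t\|_{L^\infty_tH^2}\le C\|w^0\|_{L^\infty_tH^2}\|c^0\|_{L^\infty_tH^2}$ and $\|c^0_t\|_{L^2_tH^3}\le C(\|w^0\|_{L^2_tH^3}\|c^0\|_{L^\infty_tH^3}+\|w^0\|_{L^\infty_tH^2}\|c^0\|_{L^2_tH^3})$, with the last factor bounded by $T^{1/2}\|c^0\|_{L^\infty_tH^3}$.

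The principal technical obstacle lies in the boundary contributions produced by differentiating the $w^0$-equation: the nonlinear conormal condition $w^0_r=w^0 c^0_r$ at $r=a,b$ couples the two unknowns, and its $r$- or $t$-derivatives generate traces involving $c^0_{rr}$ and $w^0 c^0_{rr}$ that must be absorbed using the one-dimensional trace inequality $\|u\|_{L^\infty(a,b)}\lesssim\|u\|_{H^1(a,b)}$ together with the explicit representation of $c^0$ through $W$. Once these traces are controlled, the remaining interior estimates reduce to standard energy-plus-Gronwall arguments of the kind already carried out in Lemmas \ref{l1}--\ref{l4}.
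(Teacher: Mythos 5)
Your reduction of the $c^0$-estimates to estimates on $w^0$ via the explicit formula $c^0=c_0e^{-\int_0^t w^0\,d\tau}$ is sound and matches the spirit of the paper, but there are two places where your route for $w^0$ itself has genuine gaps. First, you propose to obtain the base estimates ($w^0\in L^\infty_tH^1$, $w^0_t\in L^\infty_tL^2\cap L^2_tH^1$, $c^0_{rr}\in L^\infty_tL^2$) by passing to the limit $\va\to0$ in the $\ka=0$ uniform bounds \eqref{c19} and \eqref{c6}. This fails when the given $\ka$ is positive: the hypotheses of Lemma \ref{l5} only guarantee the compatibility conditions \eqref{a00} for that $\ka$, so in general $c_{0r}(a)=-\ka[\lambda-c_0(a)]\neq0$ and the same initial data do not satisfy the $\ka=0$ compatibility conditions under which \eqref{c19} and \eqref{c6} were derived; the $\ka>0$ uniform bound \eqref{a0} only controls $\va^{1/2}\|c^\va_{rr}\|^2$, which degenerates in the limit. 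Moreover, identifying the weak-$\ast$ limit of the $\va>0$ family with $(w^0,c^0)$ is essentially the content of Theorem \ref{t2}(ii), which the paper proves \emph{after} and \emph{using} this lemma, so the argument risks circularity. The paper avoids all of this by simply re-running the energy identities \eqref{a4}, \eqref{a6}, \eqref{b20}, \eqref{a19}, \eqref{a20} directly on the $\va=0$ system \eqref{e3}, where they hold verbatim and no boundary terms in $\va$ arise.

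Second, for the lift to $L^\infty_tH^2\cap L^2_tH^3$ you differentiate the parabolic equation in $r$ and test against $r^{n-1}w^0_{rrr}$; as you yourself note, this generates boundary traces (of the type $[r^{n-1}w^0_{rt}w^0_{rr}]|_a^b$, involving $w^0_{rr}$ and, through $c^0_{rt}$, again $w^0_{rr}$ at the endpoints) whose absorption is the crux of the matter, and your proposal leaves precisely this step unexecuted. The paper takes a different and cleaner route that avoids these traces entirely: it reads $\|w^0_{rr}\|_{L^2}$ and $\|w^0_{rrr}\|_{L^2}$ algebraically from the equation itself (estimates \eqref{a36} and \eqref{a40}, using the already-established bounds on $w^0_t$ and $w^0_{rt}$), and closes a coupled Gronwall argument with the $r$-differentiated ODE $c^0_t=-w^0c^0$ tested against $c^0_{rr}$ and $c^0_{rrr}$ (estimates \eqref{a38}, \eqref{a42}). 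If you want to salvage your energy-method route you must carry out the trace estimates explicitly, absorbing $\|w^0_{rr}\|_{L^\infty}\lesssim\|w^0_{rr}\|_{L^2}^{1/2}\|w^0_{rrr}\|_{L^2}^{1/2}+\|w^0_{rr}\|_{L^2}$ into the dissipation; but the elliptic reading of the equation is both shorter and what the paper actually does.
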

\begin{proof}
For clarification, we rewrite system \eqref{e1}-\eqref{e2} with $\va=0$ here:
\be\label{e3}
\left\{
\begin{array}{lll}
w^0_t=w^0_{rr}+\frac{n-1}{r}w^0_{r}-(w^0c^0_r)_r
-\frac{n-1}{r}(w^0c^0_r),\quad (r,t)\in (a,b)\times(0,\infty),\\
c^0_t=-w^0c^0,\\
w^0(r,0)=w_0(r),\quad c^0(r,0)=c_0(r),\\
(w^{0}_{r}-w^0 c^{0}_{r})(a,t)=(w^{0}_{r}-w^0 c^{0}_{r})(b,t)=0.
\end{array}
\right.
\ee
From the second equation of \eqref{e3} one easily deduces that
  \ben
  c^0(r,t)=c_0(r)e^{-\int_0^t w^{0}(r,\tau)d\tau},
  \enn
  which, implies
  \be\label{c13}
  \sup_{t>0}\|c^0(t)\|_{L^\infty}\leq \|c_0\|_{L^\infty},
   \ee
   thanks to the nonnegativity of $c_0$ and $w^0$.
  Multiplying the first equation of \eqref{e3} by $r^{n-1}$ then integrating the resulting equation over $(a,b)$ and using integration by parts, one gets
\ben
\frac{d}{dt}\int_a^b r^{n-1}w^0 dr=[r^{n-1}(w^0_r-w^0 c^0_r)]|_{r=a}^{r=b}=0.
\enn
Integrating this with respect to $t$, we have
\be\label{c14}
\int_a^b r^{n-1}w^0(t) dr=\int_a^b r^{n-1}w_0 dr,\quad \forall\ t\geq 0.
\ee
It is obvious that \eqref{a4} and \eqref{a6} still holds with $\va=0$. Thus by adding \eqref{a4} to \eqref{a6} and setting $\va=0$ in the resulting equality we arrive at
\ben
\begin{split}
\frac{d}{dt}&\int_a^b r^{n-1}(w^0\log w^0-w^0+1)dr+
2\frac{d}{dt}\int_a^b r^{n-1} (\partial_r\sqrt{c^0})^2dr\\
&+4\int_a^b r^{n-1}(\partial_r \sqrt{w^0})^2dr
+2\int_a^b r^{n-1}w^0
(\partial_r\sqrt{c^0})^2 dr=0.
\end{split}
\enn
 Integrating this equality with respect to $t$ leads to
 \be\label{c15}
\begin{split}
\sup_{0<t\leq T}&\left[\int_a^b r^{n-1}(w^0\log w^0-w^0+1)(t)dr+
\int_a^b r^{n-1} (\partial_r\sqrt{c^0})^2(t)dr\right]\\
+&\int_0^T\int_a^b r^{n-1}(\partial_r \sqrt{w^0})^2drdt
\leq
C,
\end{split}
\ee
which, along with \eqref{c13} gives rise to
\be\label{c16}
\begin{split}
\sup_{0<t\leq T}\int_a^b r^{n-1}(c^0_r)^2(t)dr
\leq 4\left[\sup_{0<t\leq T}\int_a^b r^{n-1}(\pt_r\sq{c^0})^2(t)dr
\right]\cdot
\left[\sup_{t>0}\|c^0(t)\|_{L^\infty}
\right]
\leq C.
\end{split}
\ee
Noting that estimate \eqref{b20} still holds true for $\va=0$, one gets
\ben
\begin{split}
\frac{d}{dt}&\int_{a}^b r^{n-1}(w^0)^2dr
+\int_{a}^b r^{n-1}(w^0_r)^2dr
\leq
C_0(\|r^{\frac{n-1}{2}}c_r^0\|^2_{L^2}
+\|r^{\frac{n-1}{2}}c_r^0\|^4_{L^2})
\|r^{\frac{n-1}{2}}w^0\|^2_{L^2},
\end{split}
\enn
which, along with Gronwall's inequality and \eqref{c16} entails that
\be\label{c17}
\sup_{0<t\leq T}\int_{a}^b r^{n-1}(w^0)^2(t)dr
+\int_0^T\int_{a}^b r^{n-1}(w^0_r)^2drdt\leq C.
\ee
It follows from the second equation of \eqref{e3}, \eqref{c13} and \eqref{c17} that
\be\label{c18}
\begin{split}
\int_0^T\int_a^b r^{n-1} (c^0_t)^2drdt
\leq
\int_0^T\int_a^b r^{n-1} (w^0)^2drdt\cdot\sup_{t>0}\|c^0(t)\|_{L^\infty}^2
\leq C.
\end{split}
\ee
Collecting estimates \eqref{c13}, \eqref{c16}-\eqref{c18} we arrive at
\be\label{a35}
\|c^0\|_{L^\infty(0,T;L^\infty)}+\|c^0_r\|_{L^\infty(0,T;L^2)}
+\|w^0\|_{L^\infty(0,T;L^2)}+\|w^0_r\|_{L^2(0,T;L^2)}
+\|c^0_t\|_{L^2(0,T;L^2)}\leq C.
\ee

It follows from the second equation of \eqref{e3} and Sobolev embedding inequality that
\ben
\begin{split}
\int_a^b r^{n-1}(c^0_{rt})^2dr\leq &C_0\|c^0\|_{L^\infty}^2\|w_r^0\|_{L^2}^2
+C_0\|w^0\|_{L^\infty}^2\|c_r^0\|_{L^2}^2\\
\leq & C_0\|c^0\|_{L^\infty}^2\|w_r^0\|_{L^2}^2
+C_0\|w^0\|_{H^1}^2\|c_r^0\|_{L^2}^2,
\end{split}
\enn
which, along with \eqref{a35} leads to
\be\label{b23}
\begin{split}
\int_0^T\int_a^b r^{n-1}(c^0_{rt})^2drdt\leq &C_0\|c^0\|_{L^\infty(0,T;L^\infty)}^2
\|w_r^0\|_{L^2(0,T;L^2)}^2\\
&+C_0\|w^0\|_{L^2(0,T;H^1)}^2
\|c_r^0\|_{L^\infty(0,T;L^2)}^2\\
\leq& C.
\end{split}
\ee
It is easy to check that \eqref{a19} and \eqref{a20} still hold true for $\va=0$. Thus adding \eqref{a20} to \eqref{a19} and letting $\va=0$ one gets \ben
\begin{split}
\frac{d}{dt}\big[&\int_a^b r^{n-1} (w^0_r)^2 dr
+\int_a^b r^{n-1} (w^0_t)^2 dr\big]
+\int_a^b r^{n-1} (w^0_t)^2 dr
+\int_a^b r^{n-1} (w^0_{rt})^2 dr\\
\leq &
C_0(\|r^{\frac{n-1}{2}}c^0_{r}\|_{L^2}^2+\|r^{\frac{n-1}{2}}c^0_{rt}\|_{L^2}^2)
(\|r^{\frac{n-1}{2}}w^0\|_{L^2}^2
+\|r^{\frac{n-1}{2}}w^0_r\|_{L^2}^2)\\
&+C_0(\|r^{\frac{n-1}{2}}c^0_r\|_{L^2}^2
+\|r^{\frac{n-1}{2}}c^0_r\|_{L^2}^4)
\|r^{\frac{n-1}{2}}w^0_t\|_{L^2}^2,
\end{split}
\enn
 which, along with Gronwall's inequality, \eqref{b23} and \eqref{a35} entails that
 \be\label{b24}
 \begin{split}
\sup_{0<t\leq T} \left[\int_a^b r^{n-1} (w^0_r)^2(t) dr
+\int_a^b r^{n-1} (w^0_t)^2(t) dr\right]
+\int_0^T\int_a^b r^{n-1} (w^0_{rt})^2 drdt
\leq  C.
\end{split}
 \ee
It follows from the first equation of \eqref{e3} and Sobolev embedding inequality that
\be\label{a36}
\begin{split}
\|w_{rr}^0\|_{L^2}^2\leq& C_0\left[
\|w^0_t\|_{L^2}^2+\|w^0_r\|_{L^2}^2+\|w^0c^0_r\|^2_{H^1}
\right]\\
\leq & C_0\left[
\|w^0_t\|_{L^2}^2+\|w^0_r\|_{L^2}^2+\|w^0\|_{H^1}^2\left(\|c^0_r\|^2_{L^2}
+\|c^0_{rr}\|_{L^2}^2\right)
\right].\\
\end{split}
\ee
Differentiating the second equation of \eqref{e3} with respect to $r$ twice, then multiplying the resulting equation with $2c^0_{rr}$ in $L^2$ and using \eqref{c13} and the Sobolev embedding inequality, we have
\ben
\begin{split}
\frac{d}{dt}\|c^0_{rr}\|^2_{L^2}
=&-2\int_a^b w^0_{rr}c^0c^0_{rr}dr
-4\int_a^b w^0_{r}c^0_{r}c^0_{rr}dr
-2\int_a^b w^0(c^0_{rr})^2dr\\
\leq &C_0\left(\|w^0_{rr}\|_{L^2}\|c^0\|_{L^\infty}\|c^0_{rr}\|_{L^2}
+\|w^0_{r}\|_{L^2}\|c^0_r\|_{L^\infty}\|c^0_{rr}\|_{L^2}
\right)\\
\leq &C_0\|c_0\|_{L^\infty}\|w^0_{rr}\|_{L^2}\|c^0_{rr}\|_{L^2}
+C_0\|w^0_{r}\|_{L^2}(\|c^0_r\|_{L^2}+\|c^0_{rr}\|_{L^2})\|c^0_{rr}\|_{L^2}
\\
\leq & \|w^0_{rr}\|_{L^2}^2
+C_0(\|w^0\|_{H^1}+\|c_0\|_{L^\infty}^2+1)
\|c^0_{rr}\|^2_{L^2}
+C_0\|w^0\|_{H^1}^2\|c^0\|_{H^1}^2,
\end{split}
\enn
which, in conjunction with \eqref{a36} gives rise to
\ben
\begin{split}
\frac{d}{dt}\|c^0_{rr}\|^2_{L^2}
\leq&  C_0(\|w^0\|_{H^1}^2+\|w^0\|_{H^1}
+\|c_0\|_{L^\infty}^2+1)\|c^0_{rr}\|_{L^2}^2\\
&+C_0
\left(\|w^0_t\|_{L^2}^2+\|w^0_r\|_{L^2}^2+\|w^0\|_{H^1}^2\|c^0\|_{H^1}^2
\right).
\end{split}
\enn
Applying Gronwall's inequality to this inequality, using \eqref{a35} and \eqref{b24}, we conclude that
\be\label{a38}
\|c^0_{rr}\|_{L^\infty(0,T;L^2)}^2\leq C.
\ee
\eqref{a36}, \eqref{a35}, \eqref{b24} and \eqref{a38} further lead to
\be\label{a39}
\|w^0_{rr}\|_{L^\infty(0,T;L^2)}^2\leq C.
\ee

By a similar argument used in deriving \eqref{a36}, one deduces from the first equation of \eqref{e3} that
\be\label{a40}
\begin{split}
\|w_{rrr}^0\|_{L^2}^2\leq& C_0\left[
\|w^0_t\|_{H^1}^2+\|w^0\|_{H^2}^2+\|w^0c^0_r\|^2_{H^2}
\right]\\
\leq & C_0\left[
\|w^0_t\|_{H^1}^2+\|w^0\|_{H^2}^2
+\|w^0\|_{H^2}^2\left(\|c^0\|^2_{H^2}
+\|c^0_{rrr}\|^2_{L^2}\right)
\right].
\end{split}
\ee
Applying $\pt^3_r$ to the second equation of \eqref{e3}, and multiplying the resulting equation with $2c^0_{rrr}$ in $L^2$ to get
\be\label{b63}
\begin{split}
\frac{d}{dt}\|c^0_{rrr}\|_{L^2}^2=&-2\int_a^b w^0_{rrr} c^0 c^0_{rrr}dr
-6\int_a^b (w^0_{rr}c^0_r+w^0_rc^0_{rr})c^0_{rrr}dr
-2\int_a^b w^0 (c^0_{rrr})^2 dr\\
\leq &2\|w^0_{rrr}\|_{L^2} \|c^0\|_{L^\infty}\|c^0_{rrr}\|_{L^2}
+C_0\|w^0\|_{H^2}\|c^0\|_{H^2}\|c^0_{rrr}\|_{L^2}\\
\leq & \|w^0_{rrr}\|_{L^2}^2+C_0(1+\|c_0\|_{L^\infty}^2)\|c^0_{rrr}\|_{L^2}^2
+C_0\|w^0\|_{H^2}^2\|c^0\|_{H^2}^2,
\end{split}
\ee
where the Sobolev embedding inequality and \eqref{c13} have been used. Substituting \eqref{a40} into \eqref{b63} yields
\be\label{a41}
\begin{split}
\frac{d}{dt}\|c^0_{rrr}\|_{L^2}^2
\leq& C_0\left(\|w^0\|_{H^2}^2+\|c_0\|_{L^\infty}^2+1\right)\|c^0_{rrr}\|_{L^2}^2\\
&+C_0\left(\|w^0_t\|_{H^1}^2+\|w^0\|_{H^2}^2
+\|w^0\|_{H^2}^2\|c^0\|^2_{H^2}\right).
\end{split}
\ee
Applying Gronwall's inequality to \eqref{a41},  using \eqref{a35}, \eqref{b24}, \eqref{a38} and \eqref{a39} one derives
\be\label{a42}
\|c^0_{rrr}\|_{L^\infty(0,T;L^2)}^2\leq C.
\ee
 Using \eqref{a42}, \eqref{a35}, \eqref{b24}, \eqref{a38} and \eqref{a39} one deduces from \eqref{a40} that
\be\label{a43}
\|w^0_{rrr}\|_{L^2(0,T;L^2)}^2\leq C.
\ee
Moreover, it follows from the second equation of \eqref{e3}, \eqref{a35}, \eqref{b24}, \eqref{a38}, \eqref{a39},\eqref{a42} and \eqref{a43} that
\be\label{a44}
\begin{split}
\|c^0_t\|_{L^\infty(0,T;H^2)}
+\|c^0_t\|_{L^2(0,T;H^3)}\leq &
C_0\|w^0\|_{L^\infty(0,T;H^2)}
\|c^0\|_{L^\infty(0,T;H^2)}\\
&+C_0\|w^0\|_{L^2(0,T;H^3)}
\|c^0\|_{L^\infty(0,T;H^3)}
\leq C.
\end{split}
\ee
Collecting \eqref{a35}, \eqref{b24}, \eqref{a38}, \eqref{a39} and \eqref{a42} - \eqref{a44}, we derive the desired estimate.
The proof is finished.

\end{proof}
\textbf{\emph{Proof of Theorem \ref{t1}}.}
From \eqref{a1}, \eqref{b18}, \eqref{c5} and \eqref{c11}, one gets \eqref{a0}. \eqref{c19} follows from \eqref{b25}, \eqref{b19}, \eqref{c6} and \eqref{c12}.
 \eqref{a000} follows from Lemma \ref{l5}. The proof is finished.

\endProof

\section{Proof of Theorem \ref{t2}}
 Recall that $(w^\va,c^\va)$ and $(w^0,c^0)$ denote solutions of system \eqref{e1}-\eqref{e2} with $\va>0$ and $\va=0$, respectively. For convenience, we set
  \begin{equation*}\tilde{w}=w^\va-w^0, \qquad\tilde{c}=c^\va-c^0.
  \end{equation*}
   Then one deduces from \eqref{e1}-\eqref{e2} that $(\tilde{w}, \tilde{c})$ satisfies the following initial-boundary problem:
\be\label{e4}
\left\{
\begin{array}{lll}
\ti{w}_t=\ti{w}_{rr}+\frac{n-1}{r}\ti{w}_{r}
-(w^\va\ti{c}_r+\ti{w}c^0_r)_r-\frac{n-1}{r}(w^\va\ti{c}_r+\ti{w}c^0_r),\quad (r,t)\in (a,b)\times(0,\infty),\\
\ti{c}_t=\va \ti{c}_{rr}+\va c^0_{rr}+\va\frac{n-1}{r}c^\va_r-(\ti{w}c^\va+w^0\ti{c}),\\
\ti{w}(r,0)=0,\quad \ti{c}(r,0)=0\\
(\ti{w}_r-w^\va\ti{c}_r-\ti{w}c^0_r)(a,t)
=(\ti{w}_r-w^\va\ti{c}_r-\ti{w}c^0_r)(b,t)=0,
\\
\ti{c}_r(a,t)=-\ka[\lambda-c^\va(a,t)]-c^0_r(a,t),\quad \ti{c}_r(b,t)=\ka[\lambda-c^\va(b,t)]-c^0_r(b,t).
\end{array}
\right.
\ee
Based on the reformulated system \eqref{e4}, we shall derive a series of results below.
\begin{lemma}\label{l0} Suppose $\kappa>0$ and the assumptions in Theorem \ref{t2} hold true. Let $(w^\va,c^\va)$ with $0\leq \va<1$ be the solutions of \eqref{e1}-\eqref{e2}, derived in Lemma \ref{l9}. \\
(i) If for each $t\in[0,T]$,
\be\label{a80}
\begin{split}
 c_0(a)e^{-\int_0^t w^0(a,\tau)d\tau}
\left\{
1-e^{-\int_0^t w^0(a,\tau)c^0(a,\tau)d\tau}
\right\}
-\lambda \left\{1-e^{-\int_0^t w^0(a,\tau)[c^0(a,\tau)+1]d\tau}\right\}=0.
\end{split}
\ee
Then
\be\label{a82}
\ti{c}_r(a,t)=\ka\ti{c}(a,t),\qquad \forall\ t\in [0,T].
\ee
(ii)
If for each $t\in[0,T]$,
\be\label{a81}
\begin{split}
 c_0(b)e^{-\int_0^t w^0(b,\tau)d\tau}
\left\{
1-e^{-\int_0^t w^0(b,\tau)c^0(b,\tau)d\tau}
\right\}
-\lambda \left\{1-e^{-\int_0^t w^0(b,\tau)[c^0(b,\tau)+1]d\tau}\right\}=0.
\end{split}
\ee
Then
\be\label{a83}
\ti{c}_r(b,t)=-\ka\ti{c}(b,t),\qquad \forall\ t\in [0,T].
\ee
\end{lemma}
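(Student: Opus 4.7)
The plan is to establish the identity by first showing that under hypothesis \eqref{a80}, the limiting solution $c^0$ itself satisfies the Robin-type boundary condition $c^0_r(a,t)=-\kappa[\lambda-c^0(a,t)]$ for every $t\in[0,T]$. Once this is in hand, the boundary condition on $\ti c$ recorded in the fifth line of \eqref{e4}, namely $\ti c_r(a,t)=-\kappa[\lambda-c^\va(a,t)]-c^0_r(a,t)$, collapses by direct subtraction to \eqref{a82}, since $\ti c=c^\va-c^0$. Part (ii) will follow by the same scheme at $r=b$, the only changes being the opposite orientation of the outward normal reflected in the compatibility condition \eqref{a00}.

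To pin down $c^0_r(a,t)$ in closed form, I would differentiate the second equation of \eqref{e3} in $r$, evaluate at $r=a$, and use the no-flux condition $(w^0_r-w^0c^0_r)(a,t)=0$ to replace $w^0_r(a,t)$ by $w^0(a,t)c^0_r(a,t)$. This produces the linear trace ODE
\[
\pt_t c^0_r(a,t)=-w^0(a,t)[c^0(a,t)+1]\,c^0_r(a,t),
\]
whose initial value is $c^0_r(a,0)=c_{0r}(a)=-\kappa[\lambda-c_0(a)]$ by the compatibility condition \eqref{a00}. Direct integration then gives
\[
c^0_r(a,t)=-\kappa[\lambda-c_0(a)]\exp\!\Bigl\{-\int_0^t w^0(a,\tau)[c^0(a,\tau)+1]\,d\tau\Bigr\}.
\]

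On the other hand, solving $c^0_t=-w^0c^0$ pointwise at $r=a$ yields the standard representation
\[
c^0(a,t)=c_0(a)\exp\!\Bigl\{-\int_0^t w^0(a,\tau)\,d\tau\Bigr\}.
\]
Substituting both expressions into the target identity $c^0_r(a,t)=-\kappa[\lambda-c^0(a,t)]$ and regrouping the exponentials shows it to be exactly equivalent to hypothesis \eqref{a80}, which proves (i). The same computation at $r=b$, where the compatibility condition reads $c_{0r}(b)=\kappa[\lambda-c_0(b)]$ and the outward normal flips sign, reduces the target identity $c^0_r(b,t)=\kappa[\lambda-c^0(b,t)]$ to hypothesis \eqref{a81} and hence yields \eqref{a83}. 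No serious obstacle is expected; the only subtle point is taking $r$-derivatives of \eqref{e3} up to the boundary, but this is legitimate because Lemma \ref{l5} furnishes $w^0,c^0\in L^\infty(0,T;H^2)$, so all boundary traces appearing above are continuous functions of $t$ and the ODE for $c^0_r(a,\cdot)$ admits a unique solution given by the formula above.
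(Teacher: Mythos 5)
Your proposal is correct and follows essentially the same route as the paper: both derive the closed-form traces $c^0(a,t)=c_0(a)e^{-\int_0^t w^0(a,\tau)d\tau}$ and $c^0_r(a,t)=-\kappa[\lambda-c_0(a)]e^{-\int_0^t w^0(a,\tau)[c^0(a,\tau)+1]d\tau}$ from the trace ODEs obtained by evaluating \eqref{e3} and its $r$-derivative at the boundary (using the no-flux condition and the compatibility condition \eqref{a00}), and then substitute into the boundary relation for $\ti c_r$ in \eqref{e4}. Your reformulation of \eqref{a80} as the statement that $c^0$ itself satisfies the Robin condition at $r=a$ is just a slightly different arrangement of the same cancellation the paper performs directly.
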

\begin{proof}
It follows from the second equation of \eqref{e3} that
\ben
\frac{d}{dt}\left[
c^0(a,t)e^{\int_0^t w^0(a,\tau)d\tau}
\right]=0.
\enn
Thus
\be\label{a78}
c^0(a,t)=c_0(a)e^{-\int_0^t w^0(a,\tau)d\tau}.
\ee
Differentiating the second equation of \eqref{e3} with respect to $r$, one gets
\ben
c^0_{rt}=-w^0_r c^0-w^0 c^0_r,
\enn
which, along with the boundary conditions in \eqref{e3} gives rise to
\ben
c^0_{rt}(a,t)=-w^0(a,t)c_r^0(a,t)c^0(a,t)-w^0(a,t)c_r^0(a,t).
\enn
Thus
\ben
\frac{d}{dt}\left\{
c^0_r(a,t)e^{\int_0^t[w^0(a,\tau)c^0(a,\tau)+w^0(a,\tau)] d\tau}
\right\}=0,
\enn
which, integrated over $(0,t)$ leads to
\be\label{a79}
c^0_{r}(a,t)=-\ka[\lambda-c_0(a)]e^{-\int_0^t[w^0(a,\tau)c^0(a,\tau)+w^0(a,\tau)] d\tau},
\ee
where we have used the fact that $c^{0}_r(a,0)=c_{0r}(a)=-\ka[\lambda-c_0(a)].$
Substituting \eqref{a78} and \eqref{a79} into the third boundary condition in \eqref{e4} one deduces that
\be\label{b26}
\begin{split}
\ti{c}_r(a,t)=&\ka[c^\va(a,t)-\lambda]-c^0_r(a,t)\\
=&\ka[c^\va(a,t)-c^0(a,t)]+\ka[c^0(a,t)-\lambda]-c^0_r(a,t)\\
=&\ka[c^\va(a,t)-c^0(a,t)]+\ka[c_0(a)e^{-\int_0^t w^0(a,\tau)d\tau}-\lambda]\\
&+\ka[\lambda-c_0(a)]e^{-\int_0^t[w^0(a,\tau)c^0(a,\tau)+w^0(a,\tau)] d\tau},
\end{split}
\ee
which, in conjunction with \eqref{a80} leads to \eqref{a82}.  By a similar argument used in deriving \eqref{a78}, \eqref{a79} and \eqref{b26} one deduces that
\ben
c^0(b,t)=c_0(b)e^{-\int_0^t w^0(b,\tau)d\tau},
\enn
\be\label{b27}
c^0_{r}(b,t)=\ka[\lambda-c_0(b)]e^{-\int_0^t[w^0(b,\tau)c^0(b,\tau)+w^0(b,\tau)] d\tau}
\ee
and
\ben
\begin{split}
\ti{c}_r(b,t)=&\ka[\lambda-c^\va(b,t)]-c^0_r(b,t)\\
=&\ka[c^0(b,t)-c^\va(b,t)]+\ka[\lambda-c^0(b,t)]-c^0_r(b,t)\\
=&\ka[c^0(b,t)-c^\va(b,t)]+\ka[\lambda-c_0(b)e^{-\int_0^t w^0(b,\tau)d\tau}]\\
&-\ka[\lambda-c_0(b)]e^{-\int_0^t[w^0(b,\tau)c^0(b,\tau)+w^0(b,\tau)] d\tau},
\end{split}
\enn
which, along with \eqref{a81} gives \eqref{a83}. The proof is completed.

\end{proof}

\begin{lemma}\label{l11}
 Suppose the assumptions in Theorem \ref{t2} hold true. Let $(w^0,c^0)$ be the solutions of \eqref{e1}-\eqref{e2} with $\va=0$, derived in Lemma \ref{l9}. \\
(i) If $w^0(a,t)>0$ for some $t\in [0,T]$. Then there exists $t_1\in [0,T]$ such that
\be\label{b32}
\begin{split}
 c_0(a)e^{-\int_0^{t_1} w^0(a,\tau)d\tau}
\left\{
1-e^{-\int_0^{t_1} w^0(a,\tau)c^0(a,\tau)d\tau}
\right\}
-\lambda \left\{1-e^{-\int_0^{t_1} w^0(a,\tau)[c^0(a,\tau)+1]d\tau}\right\}\neq0
\end{split}
\ee
and
\be\label{b33}
w^0(a,t_1)>0.
\ee
(ii) If $w^0(b,t)>0$ for some $t\in [0,T]$. Then there exists $t_2\in [0,T]$ such that
\be\label{b34}
\begin{split}
 c_0(b)e^{-\int_0^{t_2} w^0(b,\tau)d\tau}
\left\{
1-e^{-\int_0^{t_2} w^0(b,\tau)c^0(b,\tau)d\tau}
\right\}
-\lambda\left\{1-e^{-\int_0^{t_2} w^0(b,\tau)[c^0(b,\tau)+1]d\tau}\right\}\neq0
\end{split}
\ee
and
\be\label{b35}
w^0(b,t_2)>0.
\ee
\end{lemma}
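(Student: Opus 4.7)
The plan is to introduce the scalar function
$$F_a(t):=c_0(a)e^{-\int_0^{t} w^0(a,\tau)d\tau}\bigl\{1-e^{-\int_0^{t} w^0(a,\tau)c^0(a,\tau)d\tau}\bigr\}-\la\bigl\{1-e^{-\int_0^{t} w^0(a,\tau)[c^0(a,\tau)+1]d\tau}\bigr\},$$
so that \eqref{b32} is the statement $F_a(t_1)\neq 0$. Using the representation $c^0(a,t)=c_0(a)e^{-\int_0^t w^0(a,\tau)d\tau}$ already recorded in \eqref{a78}, one may rewrite $F_a$ compactly as
$$F_a(t)=c^0(a,t)-\la+[\la-c_0(a)]\,e^{-\int_0^{t} w^0(a,\tau)[c^0(a,\tau)+1]d\tau},$$
so in particular $F_a(0)=0$ and, since $w^0$ and $c^0$ are classical, $F_a\in C^1([0,T])$.

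Differentiating and invoking the boundary ODE $c^0_t(a,t)=-w^0(a,t)c^0(a,t)$ (which follows from the second equation of \eqref{e3}) yields
$$F_a'(t)=-w^0(a,t)\Bigl\{c^0(a,t)+[c^0(a,t)+1]\,[\la-c_0(a)]\,e^{-\int_0^{t} w^0(a,\tau)[c^0(a,\tau)+1]d\tau}\Bigr\}.$$
I would then argue by contradiction: suppose no $t_1$ meeting the conclusion of (i) exists, so that $F_a(t)=0$ at every $t\in[0,T]$ with $w^0(a,t)>0$. Continuity of $w^0(a,\cdot)$ (guaranteed by Lemma \ref{l9}) and the standing hypothesis of (i) force the relatively open set $\{t\in[0,T]:w^0(a,t)>0\}$ to be nonempty and hence contain a nondegenerate subinterval $I$; on $I$ we would then have $F_a\equiv 0$ and therefore also $F_a'\equiv 0$.

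Using $F_a\equiv 0$ on $I$ to replace $[\la-c_0(a)]e^{-\int_0^{t} w^0[c^0+1]d\tau}$ by $\la-c^0(a,t)$ inside the formula for $F_a'$ and then dividing $F_a'=0$ by $-w^0(a,t)<0$, the constraint reduces to the purely algebraic relation
$$[c^0(a,t)]^2-\la\,c^0(a,t)-\la=0\qquad\text{on }I,$$
which pins $c^0(a,\cdot)$ to the positive constant $c_\star:=\tfrac{1}{2}\bigl(\la+\sqrt{\la^2+4\la}\bigr)$ throughout $I$. But then $c^0_t(a,t)=0$ on $I$, contradicting $c^0_t(a,t)=-w^0(a,t)c^0(a,t)<0$ on $I$ (using $w^0(a,\cdot)>0$ on $I$ and the positivity $c^0>0$ from Lemma \ref{l9}). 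This supplies the desired $t_1\in I\subset[0,T]$ satisfying both \eqref{b32} and \eqref{b33}. Part (ii) is handled identically after swapping $a\leftrightarrow b$ and invoking the $r=b$ counterpart of \eqref{a78} derived in the proof of Lemma \ref{l0}. The only conceptual hurdle—quick once noticed—is the observation that $F_a=F_a'=0$ together with $w^0(a,\cdot)>0$ determines $c^0(a,\cdot)$ as one specific positive constant, which is plainly incompatible with the boundary ODE $c^0_t=-w^0c^0$.
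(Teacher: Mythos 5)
Your proof is correct. It shares the paper's overall strategy -- argue by contradiction on a nondegenerate interval $I\subset\{t:w^0(a,t)>0\}$ (which exists by continuity of $w^0(a,\cdot)$) and differentiate the identity that is assumed to vanish there -- but the way you extract the contradiction is genuinely different and somewhat cleaner. The paper rewrites the vanishing condition as the constancy of the ratio $f(t)=\big(e^{\int_0^t w^0c^0}-1\big)/\big(e^{\int_0^t w^0(c^0+1)}-1\big)=\lambda/c_0(a)$, differentiates once to obtain a second identity $g(t)=0$, and differentiates again to reach a sign contradiction in $g'$. You instead use the closed form $F_a(t)=c^0(a,t)-\lambda+[\lambda-c_0(a)]e^{-\int_0^t w^0(c^0+1)}$, differentiate once, and then substitute $F_a=0$ back into $F_a'=0$ to eliminate the exponential entirely; after dividing by $-w^0(a,t)\neq 0$ this collapses to the quadratic $[c^0(a,t)]^2-\lambda c^0(a,t)-\lambda=0$, pinning $c^0(a,\cdot)$ to the constant $\tfrac12(\lambda+\sqrt{\lambda^2+4\lambda})$ on $I$ and contradicting $c^0_t(a,t)=-w^0(a,t)c^0(a,t)<0$ there. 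Your route needs only one differentiation plus elementary algebra and avoids the sign analysis of $-1+2e^{-\int_0^t w^0}-e^{\int_0^t w^0c^0}$ (which in the paper is only strictly negative once $\int_0^t w^0(a,\tau)d\tau>0$, a point your argument sidesteps entirely); the paper's version has the mild advantage of never needing the explicit root of a quadratic. Your reduction of part (ii) by swapping $a$ and $b$ is the same as the paper's.
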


\begin{proof} We first prove Part (i) by argument of contradiction. Let $w^0(a,t_0)>0$ for some $t_0\in [0,T]$. By the continuity of $w^0(a,t)$ in variable $t$ we know that there exists $0\leq t_3<t_4\leq T$ such that
\be\label{b36}
w^0(a,t)>0,\quad \forall\  t\in [t_3,t_4].
\ee
From \eqref{a78} and the assumption $c_0>0$ on $[a,b]$ in Theorem \ref{t2}, one easily deduces that
\be\label{b39}
c^0(a,t)>0, \quad \forall \ t\in [0,T].
\ee
Assume
\be\label{b37}
\begin{split}
 c_0(a)e^{-\int_0^{t} w^0(a,\tau)d\tau}
\left\{
1-e^{-\int_0^{t} w^0(a,\tau)c^0(a,\tau)d\tau}
\right\}
-\lambda \left\{1-e^{-\int_0^{t} w^0(a,\tau)[c^0(a,\tau)+1]d\tau}\right\}=0
\end{split}
\ee
holds true for each $t\in [t_3,t_4].$
Then it follows from \eqref{b36}-\eqref{b37} that
\ben
\begin{split}
f(t):=&\frac{e^{-\int_0^{t} w^0(a,\tau)d\tau}
-e^{-\int_0^{t} w^0(a,\tau)[c^0(a,\tau)+1]d\tau}}{1-e^{-\int_0^{t} w^0(a,\tau)[c^0(a,\tau)+1]d\tau}}\\
=&\frac{e^{\int_0^{t} w^0(a,\tau)c^0(a,\tau)d\tau}-1}
{e^{\int_0^{t} w^0(a,\tau)[c^0(a,\tau)+1]d\tau}-1}\\
=&\frac{\lambda}{c_0(a)},\quad\quad\quad \forall \
t\in[t_3,t_4].
\end{split}
\enn
Thus,
\ben
\begin{split}
0=\frac{df(t)}{dt}=&
\frac{e^{\int_0^{t} w^0(a,\tau)c^0(a,\tau)d\tau} w^0(a,t)c^0(a,t)
\{e^{\int_0^{t} w^0(a,\tau)[c^0(a,\tau)+1]d\tau}-1\}
}
{\{e^{\int_0^{t} w^0(a,\tau)[c^0(a,\tau)+1]d\tau}-1\}^2}\\
&-
\frac{
\{e^{\int_0^{t} w^0(a,\tau)c^0(a,\tau)d\tau}-1\}
e^{\int_0^{t} w^0(a,\tau)[c^0(a,\tau)+1]d\tau}
w^0(a,t)[c^0(a,t)+1]
}
{\{e^{\int_0^{t} w^0(a,\tau)[c^0(a,\tau)+1]d\tau}-1\}^2}
\end{split}
\enn
for each $t\in [t_3,t_4]$, which, along with a direct computation gives rise to
\be\label{b38}
\begin{split}
g(t):=c^0(a,t)+1-c^0(a,t)e^{-\int_0^t w^0(a,\tau)d\tau}
-e^{\int_0^t w^0(a,\tau)c^0(a,\tau)d\tau}
=0,\quad \forall\  t\in [t_3,t_4].
\end{split}
\ee
Then it follows from \eqref{b38} and the second equation in \eqref{e3} that
\be\label{b40}
\begin{split}
0=\frac{dg(t)}{dt}=&
c^0_t(a,t)-c^0_t(a,t)e^{-\int_0^t w^0(a,\tau)d\tau}
+c^0(a,t)w^0(a,t)e^{-\int_0^t w^0(a,\tau)d\tau}\\
&-w^0(a,t)c^0(a,t)e^{\int_0^t w^0(a,\tau)c^0(a,\tau)d\tau}
\\
=&w^0(a,t)c^0(a,t)[-1+2e^{-\int_0^t w^0(a,\tau)d\tau}
-e^{\int_0^t w^0(a,\tau)c^0(a,\tau)d\tau}]
\end{split}
\ee
for each $t\in[t_3,t_4]$.
However, \eqref{b36} and \eqref{b39} indicate that
\ben
w^0(a,t)c^0(a,t)[-1+2e^{-\int_0^t w^0(a,\tau)d\tau}
-e^{\int_0^t w^0(a,\tau)c^0(a,\tau)d\tau}]<0, \quad \forall\ t\in [t_3,t_4],
\enn
which, contradicts with \eqref{b40}. Hence by argument of contradiction there exists a $t_1\in [t_3,t_4]$ such that
\ben
\begin{split}
 c_0(a)e^{-\int_0^{t_1} w^0(a,\tau)d\tau}
\left\{
1-e^{-\int_0^{t_1} w^0(a,\tau)c^0(a,\tau)d\tau}
\right\}
-\lambda \left\{1-e^{-\int_0^{t_1} w^0(a,\tau)[c^0(a,\tau)+1]d\tau}\right\}\neq0,
\end{split}
\enn
which, along with \eqref{b36} gives \eqref{b32} and \eqref{b33}. We thus prove Part (i). Part (ii) follows immediately from a similar argument used in proving Part (i). The proof is completed.
\end{proof}

\begin{lemma}\label{l6} Suppose $\kappa>0$ and the assumptions in Theorem \ref{t2} hold true. Let $(w^\va,c^\va)$ with $0\leq \va<1$ be the solutions of \eqref{e1}-\eqref{e2}, derived in Lemma \ref{l9}.  Then there exists a constant $C_{\lambda}$ depending on $a$, $b$, $n$, $\|w_0\|_{H^2}$, $\|c_0\|_{H^3}$, $T$ and $\lambda$ such that
\be\label{a60}
\begin{split}
\sup_{0<t\leq T}&\left[
\int_a^b \ti{w}^2(t)dr+
\int_a^b \ti{c}^2(t)dr+
\int_a^b \ti{c}_r^2(t)dr
\right]\\
+&\int_0^T\int_a^b  \ti{w}_r^2drdt\leq C_{\lambda}(1+\exp\{\exp\{\exp\{\exp\{C_{\lambda}\ka^2\}\}\}\})^2
\cdot\va^{\frac{1}{2}}.
\end{split}
\ee
Assume further that \eqref{a80} and \eqref{a81} hold.  Then
\be\label{a61}
\begin{split}
\sup_{0<t\leq T}&\left[
\int_a^b \ti{w}^2(t)dr+
\int_a^b \ti{c}^2(t)dr+
\int_a^b \ti{c}_r^2(t)dr
\right]\\
&+\int_0^T\int_a^b  \ti{w}_r^2drdt
\leq
 C_{\lambda}(1+\exp\{\exp\{\exp\{\exp\{C_{\lambda}\ka^2\}\}\}\})^2\cdot\va,
\end{split}
\ee
where the constant $C_{\lambda}$ depends on $a$, $b$, $n$, $\|w_0\|_{H^2}$, $\|c_0\|_{H^3}$, $T$ and $\lambda$.
\end{lemma}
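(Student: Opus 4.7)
My plan is to derive three coupled energy inequalities for the perturbation system \eqref{e4} and close them via Gronwall's inequality, using the uniform-in-$\va$ bounds of Theorem \ref{t1}. First, testing the $\ti w$-equation of \eqref{e4} by $r^{n-1}\ti w$, the flux-type boundary condition $(\ti w_r - w^\va\ti c_r - \ti w c^0_r)|_{r=a,b} = 0$ causes all boundary contributions to cancel after integration by parts; combined with $\|w^\va\|_{L^\infty},\|c^0_r\|_{L^\infty}\le C_{\la,\ka}$ (from Theorem \ref{t1}(i), (iii) together with the 1D Sobolev embedding), this yields
\[
\frac{d}{dt}\|\ti w\|_{L^2}^2 + \|\ti w_r\|_{L^2}^2 \le C_{\la,\ka}\bigl(\|\ti c_r\|_{L^2}^2 + \|\ti w\|_{L^2}^2\bigr).
\]
Second, testing the $\ti c$-equation by $r^{n-1}\ti c$ produces the dissipation $\va\|\ti c_r\|_{L^2}^2$, while the source terms $\va c^0_{rr}$ and $\va(n-1)c^\va_r/r$ together with the coupling $\ti w c^\va$ are bounded by $O(\va^2)+C(\|\ti w\|_{L^2}^2+\|\ti c\|_{L^2}^2)$ via Cauchy--Schwarz and Theorem \ref{t1}. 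Third, testing the $\ti c$-equation by $r^{n-1}\ti c_t$ and integrating by parts in $r$ yields $\tfrac{\va}{2}\tfrac{d}{dt}\|\ti c_r\|_{L^2}^2 + \|\ti c_t\|_{L^2}^2 \le $ boundary $+$ source of the same schematic form.

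The delicate step is the treatment of the boundary contributions $\va[r^{n-1}\ti c\ti c_r]_a^b$ and $\va[r^{n-1}\ti c_t\ti c_r]_a^b$ in the latter two estimates. Decomposing $\ti c_r(a,t) = \ka\ti c(a,t)+\beta_a(t)$ with $\beta_a(t) = \ka[c^0(a,t)-\la]-c^0_r(a,t)$ (as in the proof of Lemma \ref{l0}; analogously at $r=b$), the $\ka\ti c^2$-part is dissipative and absorbed on the LHS, while the residual $\va\beta_a\ti c(a,t)$ in the second test is controlled by $C\va\|\ti c\|_{L^\infty}$ via the 1D Gagliardo--Nirenberg interpolation $\|\ti c\|_{L^\infty}^2\le C(\|\ti c\|_{L^2}^2+\|\ti c\|_{L^2}\|\ti c_r\|_{L^2})$ and an $\va$-adapted Young's inequality that absorbs a fraction of $\va\|\ti c_r\|^2$ into the dissipation while leaving an $O(\va)$ constant source. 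The corresponding residual $\va\beta_a\ti c_t(a,t)$ in the third test is first converted by integration by parts in time to $\va\beta_a(T)\ti c(a,T) - \int_0^T \va\beta_a'(s)\ti c(a,s)\,ds$ and then handled by the same interpolation. Under the compatibility conditions \eqref{a80}--\eqref{a81}, Lemma \ref{l0} yields $\beta_a\equiv\beta_b\equiv 0$, so these residuals disappear entirely and the source drops from $O(\va)$ to $O(\va^2)$, producing the improved rate \eqref{a61}.

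Combining the three inequalities into an energy functional $F(t)=\|\ti w\|^2+\|\ti c\|^2+\va\|\ti c_r\|^2$ and applying Gronwall yields the claimed bound on $\sup_t(\|\ti w\|^2+\|\ti c\|^2)$, after which the $\sup_t\|\ti c_r\|^2$-bound follows by dividing the integrated third estimate by $\va$ and using the already-obtained smallness of the $\ti c$-source. The main obstacle is the mismatch between the unweighted $\|\ti c_r\|^2$-term on the right of the first inequality and the $\va$-weighted dissipation $\va\|\ti c_r\|^2$ on the left of the second and third: a naive bootstrap through Gronwall would produce an $\exp(CT/\va)$ blow-up as $\va\to 0$. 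The resolution is that, after the Gagliardo--Nirenberg treatment of the boundary residual above, the $\ti c$-source is already of order $\va$ (resp.\ $\va^2$), so the integrated $\va$-weighted dissipation $\int_0^T\va\|\ti c_r\|^2\,dt$ inherits the same smallness and the coupling in the $\ti w$-estimate closes consistently at the claimed rate, completing the proofs of \eqref{a60} and \eqref{a61}.
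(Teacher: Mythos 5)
Your overall architecture (three energy inequalities for \eqref{e4} closed by Gronwall, with Lemma \ref{l0} supplying the improved boundary identity $\ti{c}_r=\pm\ka\ti{c}$ under \eqref{a80}--\eqref{a81}) matches the paper in outline, and your first two inequalities are essentially the paper's \eqref{a24}--\eqref{a25}. But your third estimate is the wrong one, and the gap it creates is fatal. Testing the $\ti{c}$-equation with $\ti{c}_t$ produces $\tfrac{\va}{2}\tfrac{d}{dt}\|\ti{c}_r\|_{L^2}^2+\|\ti{c}_t\|_{L^2}^2$ on the left: the time derivative of $\|\ti{c}_r\|_{L^2}^2$ carries a factor $\va$. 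The coupling term $-\int(\ti{w}c^\va+w^0\ti{c})\ti{c}_t$ contributes, after absorbing $\|\ti{c}_t\|^2$, a source of size $C(\|\ti{w}\|_{L^2}^2+\|\ti{c}\|_{L^2}^2)$, which is at best $O(\va^{1/2})$ in time-integral; so the integrated inequality gives $\va\|\ti{c}_r(t)\|_{L^2}^2\le C\va^{1/2}$, and "dividing by $\va$" yields only $\|\ti{c}_r(t)\|_{L^2}^2\le C\va^{-1/2}$, which is useless. The same problem breaks the closure of the $\ti{w}$-estimate: the first inequality needs $\int_0^T\|\ti{c}_r\|_{L^2}^2\,dt\le C\va^{1/2}$ (unweighted), and your proposed resolution --- that the $\va$-weighted dissipation $\int_0^T\va\|\ti{c}_r\|^2\,dt$ "inherits the smallness of the source" --- gives at most $\int_0^T\|\ti{c}_r\|^2\,dt\le C$, so Gronwall returns $\|\ti{w}\|_{L^2}^2\le C$ rather than $C\va^{1/2}$. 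The argument is circular and does not close at any positive rate.

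The paper avoids this by differentiating the $\ti{c}$-equation in $r$ first (equation \eqref{a30}) and testing the result with $\ti{c}_r$, which yields $\tfrac12\tfrac{d}{dt}\|\ti{c}_r\|_{L^2}^2+\va\|\ti{c}_{rr}\|_{L^2}^2$ with \emph{no} $\va$ in front of the time derivative; the zeroth-order term $-(\ti{w}c^\va+w^0\ti{c})_r\ti{c}_r$ is then controlled by $\|\ti{w}_r\|,\|\ti{c}_r\|,\|\ti{w}\|,\|\ti{c}\|$ alone. The price is the boundary term $\va[\ti{c}_{rr}\ti{c}_r]|_a^b$, involving the \emph{second} derivative at the boundary, and this is the actual crux of the lemma: it is estimated via Gagliardo--Nirenberg, $\|c^\va_{rr}\|_{L^\infty}\le C(\|c^\va_{rr}\|_{L^2}+\|c^\va_{rr}\|_{L^2}^{1/2}\|c^\va_{rrr}\|_{L^2}^{1/2})$, combined with the $\va$-weighted uniform bounds $\va^{1/2}\|c^\va_{rr}\|_{L^\infty(0,T;L^2)}^2+\va^{3/2}\|c^\va_{rrr}\|_{L^2(0,T;L^2)}^2\le C_{\lambda,\ka}$ from Theorem \ref{t1}(i). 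That computation is precisely where the rates $\va^{1/2}$ (general case, $\ti{c}_r$ at the boundary bounded by $\ka\lambda+\ka\|c^\va\|_{L^\infty}+\|c^0_r\|_{L^\infty}=O(1)$) and $\va$ (under \eqref{a80}--\eqref{a81}, where $\ti{c}_r=\pm\ka\ti{c}$ gains an extra smallness factor) originate. Your proposal never confronts this term, because your weaker energy identity only produces boundary terms with first derivatives. To repair the proof you must replace your third step by the paper's $\pt_r$-differentiated estimate and carry out the boundary analysis of $\va[\ti{c}_{rr}\ti{c}_r]|_a^b$.
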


\begin{proof}
Multiplying the first equation of \eqref{e4} with $\ti{w}$ in $L^2$ and using integration by parts, we have
\be\label{a24}
\begin{split}
\frac{1}{2}\frac{d}{dt}\int_a^b \ti{w}^2dr
+\int_a^b \ti{w}_r^2dr
=&\int_a^b (w^\va\ti{c}_r+\ti{w}c^0_r) \ti{w}_rdr
+\int_a^b \frac{n-1}{r}(\ti{w}_r-w^\va\ti{c}_r-\ti{w}c^0_r)\ti{w}dr\\
\leq &C_0 (\|w^\va\|_{L^\infty}\|\ti{c}_r\|_{L^2}+\|\ti{w}\|_{L^2}\|c^0_r\|_{L^\infty})
\|\ti{w}_r\|_{L^2}\\
&+C_0 (\|\ti{w}_r\|_{L^2}+\|w^\va\|_{L^\infty}\|\ti{c}_r\|_{L^2}
+\|\ti{w}\|_{L^2}\|c^0_r\|_{L^\infty})
\|\ti{w}\|_{L^2}\\
\leq& \frac{1}{4}\|\ti{w}_r\|^2_{L^2}
+C_0\|w^\va\|_{H^1}^2\|\ti{c}_r\|_{L^2}^2
+C_0(\|c^0\|_{H^2}^2+1)\|\ti{w}\|_{L^2}^2,
\end{split}
\ee
where in the last inequality we have used the Sobolev embedding inequality and Cauchy-Schwarz inequality.
Taking the $L^2$ inner product of the second equation of \eqref{e4} with $\ti{c}$ and using the Sobolev embedding inequality and Cauchy-Schwarz inequality, one has
\be\label{a25}
\begin{split}
\frac{1}{2}\frac{d}{dt}\int_a^b \ti{c}^2dr
=&\va\int_a^b  c^\va_{rr}\ti{c}\,dr
+\va\int_a^b \frac{n-1}{r}c^\va_r \ti{c}\,dr
-\int_a^b (\ti{w}c^\va+w^0\ti{c})\ti{c}\,dr\\
\leq &
C_0\va(\|c^\va_{rr}\|_{L^2}+\|c^\va_{r}\|_{L^2})\|\ti{c}\|_{L^2}
+C_0(\|c^\va\|_{L^\infty}\|\ti{w}\|_{L^2}
+\|w^0\|_{L^\infty}\|\ti{c}\|_{L^2})\|\ti{c}\|_{L^2}\\
\leq & C_0(\|w^0\|_{H^1}+1)\|\ti{c}\|^2_{L^2}
+C_0\|c^\va\|_{H^1}^2\|\ti{w}\|^2_{L^2}
+C_0\va^2(\|c^\va_{rr}\|^2_{L^2}+\|c^\va_{r}\|^2_{L^2}).
\end{split}
\ee
Differentiating the second equation of \eqref{e4} with respect to $r$ to have
\be\label{a30}
\ti{c}_{rt}=\va \ti{c}_{rrr}+\va c^0_{rrr}+\va\frac{n-1}{r}c^\va_{rr}
-\va\frac{n-1}{r^2}c^\va_r-(\ti{w}c^\va+w^0\ti{c})_r.
\ee
Multiplying \eqref{a30} with $\ti{c}_r$ in $L^2$, then employibg integration by parts to get
\be\label{a26}
\begin{split}
\frac{1}{2}\frac{d}{dt}&\int_a^b \ti{c}_r^2dr
+\va \int_a^b \ti{c}_{rr}^2 dr\\
=&\va [\ti{c}_{rr}\ti{c}_r]\big|_{r=a}^{r=b}
+\int_a^b  \left[\va c^0_{rrr}
+\va\frac{n-1}{r}c^\va_{rr}
-\va\frac{n-1}{r^2}c^\va_r
-(\ti{w}c^\va+w^0\ti{c})_r\right]\ti{c}_rdr\\
:=&K_1+K_2.
\end{split}
\ee
It follows from the third and fourth boundary conditions in \eqref{e4} and Gagliardo-Nirenberg interpolation inequality that
\ben
\begin{split}
K_1\leq &2\va  (\|c^\va_{rr}\|_{L^\infty}+\|c^0_{rr}\|_{L^\infty})
(\ka\lambda+\ka\|c^\va\|_{L^\infty}+\|c^0_r\|_{L^\infty})\\
\leq& C_0\va (\|c^\va_{rr}\|_{L^2}
+\|c^\va_{rr}\|_{L^2}^{\frac{1}{2}}
\|c^\va_{rrr}\|_{L^2}^{\frac{1}{2}}
+\|c^0\|_{H^3})(\ka\lambda
+\ka\|c^\va\|_{H^1}+\|c^0\|_{H^2})\\
\leq &C_0\va^{\frac{1}{2}}(\va^{\frac{1}{4}}\|c^\va_{rr}\|_{L^2}
+\va^{\frac{1}{8}}\|c^\va_{rr}\|_{L^2}^{\frac{1}{2}}
\cdot\va^{\frac{3}{8}}\|c^\va_{rrr}\|_{L^2}^{\frac{1}{2}} +\|c^0\|_{H^3})(\kappa\lambda +\kappa \|c^\va\|_{H^1}+\|c^0\|_{H^2})\\
\leq& C_0\va^{\frac{1}{2}}(
\va^{\frac{1}{2}}\|c^\va_{rr}\|_{L^2}^2
+\va^{\frac{3}{2}}\|c^\va_{rrr}\|_{L^2}^2
+\ka^2\|c^\va\|_{H^1}^2
+\|c^0\|_{H^3}^2+\ka^2\lambda^2),\\
\end{split}
\enn
where in the third inequality we have used the fact that $0<\va<1$.
The Sobolev embedding inequality entails that
\be\label{a86}
\begin{split}
K_2\leq & C_0\va(\|c^0_{rrr}\|_{L^2}+\|c^\va_{rr}\|_{L^2}+\|c^\va_{r}\|_{L^2})
\|\ti{c}_{r}\|_{L^2}\\
&+C_0[\|c^\va\|_{L^\infty}\|\ti{w}_r\|_{L^2}
+\|\ti{w}\|_{L^\infty}\|c^\va_r\|_{L^2}
+\|w^0\|_{L^\infty}\|\ti{c}_r\|_{L^2}
+\|w^0_r\|_{L^\infty}\|\ti{c}\|_{L^2}]\|\ti{c}_r\|_{L^2}\\
\leq & C_0\va(\|c^0\|_{H^3}+\|c^\va_{rr}\|_{L^2}+\|c^\va_{r}\|_{L^2})\|\ti{c}_{r}\|_{L^2}\\
&+C_0[\|c^\va\|_{H^1}\|\ti{w}_r\|_{L^2}
+(\|\ti{w}\|_{L^2}+\|\ti{w}_r\|_{L^2})\|c^\va_r\|_{L^2}]\|\ti{c}_r\|_{L^2}\\
&+C_0[\|w^0\|_{H^1}\|\ti{c}_r\|_{L^2}
+\|w^0\|_{H^2}\|\ti{c}\|_{L^2}]\|\ti{c}_r\|_{L^2}\\
\leq &\frac{1}{4}\|\ti{w}_r\|^2_{L^2}
+C_0(\|c^\va\|_{H^1}^2+\|w^0\|_{H^2}^2+1)
\|\ti{c}_r\|_{L^2}^2
+C_0(\|\ti{w}\|^2_{L^2}+\|\ti{c}\|^2_{L^2})\\
&+C_0\va^2(\|c^0\|^2_{H^3}+\|c^\va_{rr}\|_{L^2}^2+\|c^\va_{r}\|_{L^2}^2).
\end{split}
\ee
Substituting the above estimates for $K_1$ and $K_2$ into \eqref{a26} one deduces that
\be\label{a84}
\begin{split}
\frac{1}{2}\frac{d}{dt}&\int_a^b \ti{c}_r^2dr
+\va \int_a^b\ti{c}_{rr}^2 dr\\
\leq & \frac{1}{4}\|\ti{w}_r\|^2_{L^2}
+C_0(\|c^\va\|_{H^1}^2+\|w^0\|_{H^2}^2+1)
\|\ti{c}_r\|^2_{L^2}\\
&+C_0(\|\ti{w}\|^2_{L^2}+\|\ti{c}\|^2_{L^2})
+C_0\va^2(\|c^0\|^2_{H^3}+\|c^\va_{rr}\|^2_{L^2}+\|c^\va_{r}\|^2_{L^2})\\
&+C_0\va^{\frac{1}{2}}(
\va^{\frac{1}{2}}\|c^\va_{rr}\|_{L^2}^2
+\va^{\frac{3}{2}}\|c^\va_{rrr}\|_{L^2}^2
+\ka^2\|c^\va\|_{H^1}^2
+\|c^0\|_{H^3}^2+\ka^2\lambda^2).
\end{split}
\ee
Adding \eqref{a84} and \eqref{a25} to \eqref{a24} we arrive at
\be\label{a62}
\begin{split}
\frac{d}{dt}&\left[
\int_a^b \ti{w}^2dr+
\int_a^b \ti{c}^2dr+
\int_a^b \ti{c}_r^2dr
\right]
+\int_a^b  \ti{w}_r^2dr+\va\int_a^b \ti{c}_{rr}^2dr\\
&\leq C_0(\|w^\va\|_{H^1}^2+\|c^\va\|_{H^1}^2+\|w^0\|_{H^2}^2+1)\|\ti{c}_r\|^2_{L^2}
+C_0(\|c^\va\|_{H^1}^2+\|c^0\|_{H^2}^2+1)\|\ti{w}\|^2_{L^2}\\
&\ \ \ \ +C_0(\|w^0\|_{H^1}+1)\|\ti{c}\|^2_{L^2}
+C_0\va^2(\|c^\va_{rr}\|_{L^2}^2+\|c^\va_{r}\|_{L^2}^2+\|c^0\|_{H^3}^2)\\
&\ \ \ \ +C_0\va^{\frac{1}{2}}(
\va^{\frac{1}{2}}\|c^\va_{rr}\|_{L^2}^2
+\va^{\frac{3}{2}}\|c^\va_{rrr}\|_{L^2}^2
+\ka^2\|c^\va\|_{H^1}^2
+\|c^0\|_{H^3}^2+\ka^2\lambda^2).
\end{split}
\ee
Applying Gronwall's inequality to \eqref{a62}, using \eqref{a0}, \eqref{a000} and the fact $0<\va<1$, one derives \eqref{a60}.

We proceed to proving \eqref{a61}. Since \eqref{a80} and \eqref{a81} hold true, from Lemma \ref{l0} we know that the boundary term $K_1$ in \eqref{a26} deserves a better estimates as follows:
\be\label{a85}
\begin{split}
K_1=&\va [\ti{c}_{rr}\ti{c}_r]\big|_{r=a}^{r=b}\\
=&-\va\ka(\ti{c}_{rr}\ti{c})(a,t)-\va\ka(\ti{c}_{rr}\ti{c})(b,t)\\
\leq &2\va\ka (\|c^\va_{rr}\|_{L^\infty}+\|c^0_{rr}\|_{L^\infty})
\|\ti{c}\|_{L^\infty}\\
\leq &C_0\va\ka(\|c^\va_{rr}\|_{L^2}
+\|c^\va_{rr}\|^{\frac{1}{2}}_{L^2}
\|c^\va_{rrr}\|^{\frac{1}{2}}_{L^2}
+\|c^0\|_{H^3})(\|\ti{c}_r\|_{L^2}+\|\ti{c}\|_{L^2})\\
\leq& (\|\ti{c}_r\|^2_{L^2}
+\|\ti{c}\|^2_{L^2})
+C_0\ka^2\va^2(\|c^\va_{rr}\|^2_{L^2}+\|c^0\|_{H^3}^2)
+C_0\ka^2(\va^{\frac{3}{2}}\|c^\va_{rr}\|_{L^2}^2+
\va^{\frac{5}{2}}\|c^\va_{rrr}\|_{L^2}^2)\\
\leq & (\|\ti{c}_r\|_{L^2}^2
+\|\ti{c}\|^2_{L^2})
+C_0\ka^2\va(\va^{\frac{1}{2}}\|c^\va_{rr}\|_{L^2}^2+
\va^{\frac{3}{2}}\|c^\va_{rrr}\|_{L^2}^2+\|c^0\|_{H^3}^2),
\end{split}
\ee
where in the last inequality we have used the fact that $0<\va<1$.
Inserting \eqref{a85} and \eqref{a86} into \eqref{a26} we have
\be\label{a87}
\begin{split}
\frac{1}{2}\frac{d}{dt}&\int_a^b \ti{c}_r^2dr
+\va \int_a^b\ti{c}_{rr}^2 dr\\
\leq & \frac{1}{4}\|\ti{w}_r\|^2_{L^2}
+C_0(\|c^\va\|_{H^1}^2+\|w^0\|_{H^2}^2+1)
\|\ti{c}_r\|_{L^2}^2\\
&+C_0(\|\ti{w}\|^2_{L^2}+\|\ti{c}\|_{L^2}^2)
+C_0\va^2(\|c^0\|^2_{H^3}+\|c^\va_{rr}\|_{L^2}^2+\|c^\va_{r}\|_{L^2}^2)\\
&+C_0\ka^2\va(\va^{\frac{1}{2}}\|c^\va_{rr}\|_{L^2}^2+
\va^{\frac{3}{2}}\|c^\va_{rrr}\|_{L^2}^2+\|c^0\|_{H^3}^2).
\end{split}
\ee
Adding \eqref{a87} and \eqref{a25} to \eqref{a24} one gets
\be\label{a88}
\begin{split}
\frac{d}{dt}&\left[
\int_a^b \ti{w}^2dr+
\int_a^b \ti{c}^2dr+
\int_a^b \ti{c}_r^2dr
\right]
+\int_a^b  \ti{w}_r^2dr+\va\int_a^b \ti{c}_{rr}^2dr\\
&\leq C_0(\|w^\va\|_{H^1}^2+\|c^\va\|_{H^1}^2+\|w^0\|_{H^2}^2+1)\|\ti{c}_r\|_{L^2}^2
+C_0(\|c^\va\|_{H^1}^2+\|c^0\|_{H^2}^2+1)\|\ti{w}\|_{L^2}^2\\
&\ \ \ \ +C_0(\|w^0\|_{H^1}+1)\|\ti{c}\|_{L^2}^2
+C_0\va^2(\|c^\va_{rr}\|_{L^2}^2+\|c^\va_{r}\|_{L^2}^2+\|c^0\|_{H^3}^2)\\
&\ \ \ \ +C_0\ka^2\va(\va^{\frac{1}{2}}\|c^\va_{rr}\|_{L^2}^2+
\va^{\frac{3}{2}}\|c^\va_{rrr}\|_{L^2}^2+\|c^0\|_{H^3}^2).
\end{split}
\ee
Applying Gronwall's inequality to \eqref{a88} and using \eqref{a0} and \eqref{a000} we obtain \eqref{a61}.
The proof is completed.

\end{proof}

\begin{lemma} Suppose $\ka>0$ and the assumptions in Theorem \ref{t2} hold. Let $(w^\va,c^\va)$ with $0\leq \va<1$ be the solutions of \eqref{e1}-\eqref{e2}, derived in Lemma \ref{l9}. Then there exists a constant $C_{\lambda}$ depending on $a$, $b$, $n$, $\|w_0\|_{H^2}$, $\|c_0\|_{H^3}$, $T$ and $\lambda$ such that
\be\label{a66}
\begin{split}
\sup_{0<t\leq T}&\left[\int_a^b \ti{w}_t^2 dr
+\int_a^b \ti{w}_r^2 dr\right]
+\int_0^T\int_a^b \ti{w}_{t}^2 drdt
+\int_0^T\int_a^b \ti{w}_{rt}^2 drdt\\
&+\int_0^T\int_a^b \ti{c}_{rt}^2drdt
\leq C_{\lambda}(1+\exp\{\exp\{\exp\{\exp\{C_{\lambda}\ka^2\}\}\}\})^{4}
\cdot\va^{\frac{1}{2}}.
\end{split}
\ee
Assume further that \eqref{a80} and \eqref{a81} hold true. Then
\be\label{a67}
\begin{split}
\sup_{0<t\leq T}&\left[\int_a^b (\ti{w}_t)^2 dr
+\int_a^b \ti{w}_r^2 dr\right]
+\int_0^T\int_a^b \ti{w}_{t}^2 drdt
+\int_0^T\int_a^b \ti{w}_{rt}^2 drdt\\
&+\int_0^T\int_a^b \ti{c}_{rt}^2drdt
\leq C_{\lambda}(1+\exp\{\exp\{\exp\{\exp\{C_{\lambda}\ka^2\}\}\}\})^{4}\cdot\va,
\end{split}
\ee
where the constant $C_{\lambda}$ depends on $a$, $b$, $n$, $\|w_0\|_{H^2}$, $\|c_0\|_{H^3}$, $T$ and $\lambda$.
\end{lemma}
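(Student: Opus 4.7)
The plan is to adapt the three-step energy scheme of Lemma \ref{l3} (namely the derivation of \eqref{c5}--\eqref{c6}) to the difference system \eqref{e4}, while tracking the powers of $\va$ carefully, and to combine everything by Gronwall using the uniform bounds \eqref{a0}, \eqref{a000} and the already-proven lower-order estimates \eqref{a60}, \eqref{a61} for $(\ti w,\ti c)$.

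First I would test the first equation of \eqref{e4} against $r^{n-1}\ti w_t$ and integrate by parts, mirroring \eqref{a19}, to obtain
\ben
\tfrac{1}{2}\tfrac{d}{dt}\int_a^b r^{n-1}\ti w_r^2\,dr+\int_a^b r^{n-1}\ti w_t^2\,dr\le \tfrac{1}{4}\|r^{\frac{n-1}{2}}\ti w_{rt}\|_{L^2}^2+\text{l.o.t.},
\enn
where the lower order terms are controlled by $\|\ti c_r\|_{L^2}^2$, $\|\ti w\|_{L^2}^2$, $\|\ti w_r\|_{L^2}^2$ together with quantities bounded in \eqref{a0} and \eqref{a000}. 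Next I differentiate the $\ti w$-equation in $t$, test with $r^{n-1}\ti w_t$, and integrate by parts in the spirit of \eqref{a20}, producing
\ben
\tfrac{1}{2}\tfrac{d}{dt}\int_a^b r^{n-1}\ti w_t^2\,dr+\int_a^b r^{n-1}\ti w_{rt}^2\,dr\le C_\lambda\,\|\ti c_{rt}\|_{L^2}^2(\|\ti w\|_{L^2}^2+\|\ti w_r\|_{L^2}^2)+\text{l.o.t.}
\enn
Hence I still need to control $\int_0^T\!\int_a^b r^{n-1}\ti c_{rt}^2\,drdt$.

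To estimate $\ti c_{rt}$, I differentiate the second equation of \eqref{e4} in $r$ to arrive at \eqref{a30}, pair it with $r^{n-1}\ti c_{rt}$, and integrate by parts on the $\va\ti c_{rrr}\ti c_{rt}$ term. This produces
\ben
\int_a^b r^{n-1}\ti c_{rt}^2\,dr+\tfrac{\va}{2}\tfrac{d}{dt}\int_a^b r^{n-1}\ti c_{rr}^2\,dr=\va\bigl[r^{n-1}\ti c_{rr}\ti c_{rt}\bigr]\Big|_{r=a}^{r=b}+\text{interior terms},
\enn
where the interior terms contain $\va\|c^0_{rrr}\|_{L^2}\|\ti c_{rt}\|_{L^2}$, the $\va$-weighted norms $\va^{1/2}\|c^\va_{rr}\|_{L^2}$, $\va^{3/2}\|c^\va_{rrr}\|_{L^2}^2$ bounded by \eqref{a0}, and quadratic expressions in $(\ti w,\ti c,\ti w_r,\ti c_r)$ already controlled by \eqref{a60}.

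The main obstacle is the boundary term $\va[r^{n-1}\ti c_{rr}\ti c_{rt}]\big|_{r=a}^{r=b}$. In the general case (part (i)) I only know the third and fourth boundary conditions in \eqref{e4}, so I must bound $\|\ti c_{rr}\|_{L^\infty}$ and $\|\ti c_{rt}\|_{L^\infty}$ through Gagliardo--Nirenberg (splitting $\ti c=c^\va-c^0$ and estimating each piece via $\|c^\va_{rr}\|_{L^2}^{1/2}\|c^\va_{rrr}\|_{L^2}^{1/2}$ plus $\|c^0\|_{H^3}$), which, after Young absorption, yields a factor of $\va^{1/2}$ exactly as in the treatment of $I_4$ in \eqref{b55} and of $K_1$ in \eqref{a85}. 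When the compatibility conditions \eqref{a80}--\eqref{a81} hold, Lemma \ref{l0} gives $\ti c_r(a,t)=\ka\ti c(a,t)$ and $\ti c_r(b,t)=-\ka\ti c(b,t)$, hence $\ti c_{rt}(a,t)=\ka\ti c_t(a,t)$ and $\ti c_{rt}(b,t)=-\ka\ti c_t(b,t)$. This lets me trade the boundary derivative $\ti c_{rt}$ for the boundary value $\ti c_t$ (which is controlled via the equation and $\|\ti c\|_{H^1}$ from \eqref{a61}), upgrading the boundary-term bound and producing the sharper rate $\va$. Adding the three differential inequalities, applying Gronwall, and inserting \eqref{a0}, \eqref{a000}, and \eqref{a60} (respectively \eqref{a61}) will then yield \eqref{a66} (respectively \eqref{a67}).
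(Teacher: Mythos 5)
Your proposal follows essentially the same route as the paper: the same three energy identities (testing the $\ti w$-equation with $\ti w_t$, its $t$-derivative with $\ti w_t$, and the $r$-differentiated $\ti c$-equation with $\ti c_{rt}$), the same identification of the boundary term $\va[\ti c_{rr}\ti c_{rt}]|_{r=a}^{r=b}$ as the source of the $\va^{1/2}$ versus $\va$ dichotomy, and the same use of Lemma \ref{l0} to convert $\ti c_{rt}$ into $\pm\ka\ti c_t$ at the boundary for the sharper rate. The only cosmetic caveat is that one should close the $\ti c_{rt}$ estimate first (or weight the inequalities before summing), since the $\ti w_t$-energy contains $\|w^\va\|_{H^1}^2\|\ti c_{rt}\|_{L^2}^2$ with a large constant that cannot be absorbed into the dissipation of a naively summed inequality; this is exactly how the paper orders the argument.
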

\begin{proof}
Taking the $L^2$ inner product of \eqref{a30} with $\ti{c}_{rt}$ and using integration by parts to have
\be\label{a90}
\begin{split}
\frac{1}{2}\va\frac{d}{dt}&\int_a^b \ti{c}_{rr}^2dr
+\int_a^b \ti{c}_{rt}^2dr\\
=&\va[\ti{c}_{rr}\ti{c}_{rt}]\big|_{r=a}^{r=b}
+\int_a^b \big[\va c^0_{rrr}+\va\frac{n-1}{r}c^\va_{rr}
-\va\frac{n-1}{r^2}c^\va_r
-(\ti{w}c^\va+w^0\ti{c})_r
\big]\ti{c}_{rt}dr\\
:=&K_3+K_4.
\end{split}
\ee
 By the third and fourth boundary condition in \eqref{e4} and Gagliardo-Nirenberg interpolation inequality one deduces that
\ben
\begin{split}
K_3=&-\va\ti{c}_{rr}(b,t)[\kappa c^\va_t(b,t)+c^0_{rt}(b,t)]
-\va\ti{c}_{rr}(a,t)[\kappa c^\va_t(a,t)-c^0_{rt}(a,t)]\\
\leq &2\va  (\|c^\va_{rr}\|_{L^\infty}+\|c^0_{rr}\|_{L^\infty})
(\ka\|c^\va_t\|_{L^\infty}+\|c^0_{rt}\|_{L^\infty})\\
\leq& C_0\va (\|c^\va_{rr}\|_{L^2}
+\|c^\va_{rr}\|_{L^2}^{\frac{1}{2}}\|c^\va_{rrr}\|_{L^2}^{\frac{1}{2}}
+\|c^0\|_{H^3})
(\ka\|c^\va_t\|_{H^1}+\|c^0_t\|_{H^2})\\
\leq &C_0\va^{\frac{1}{2}}(\va^{\frac{1}{4}}\|c^\va_{rr}\|_{L^2}
+\va^{\frac{1}{8}}\|c^\va_{rr}\|_{L^2}^{\frac{1}{2}}
\cdot\va^{\frac{3}{8}}\|c^\va_{rrr}\|_{L^2}^{\frac{1}{2}}
+\|c^0\|_{H^3})(\ka \|c^\va_t\|_{H^1}+\|c^0_t\|_{H^2})\\
\leq &C_0\va^{\frac{1}{2}}(
\va^{\frac{1}{2}}\|c^\va_{rr}\|_{L^2}^2
+\va^{\frac{3}{2}}\|c^\va_{rrr}\|_{L^2}^2
+\ka^2\|c^\va_t\|_{H^1}^2
+\|c^0\|_{H^3}^2+\|c^0_t\|_{H^2}^2),
\end{split}
\enn
where in the third inequality we have used the fact that $0<\va<1$.
The Sobolev embedding inequality indicates that
\be\label{a93}
\begin{split}
K_4\leq & C_0\va(\|c^0_{rrr}\|_{L^2}+\|c^\va_{rr}\|_{L^2}+\|c^\va_{r}\|_{L^2})
\|\ti{c}_{rt}\|_{L^2}\\
&+C_0(\|c^\va\|_{L^\infty}\|\ti{w}_r\|_{L^2}
+\|\ti{w}\|_{L^\infty}\|c^\va_r\|_{L^2}
+\|w^0\|_{L^\infty}\|\ti{c}_r\|_{L^2}
+\|w^0_r\|_{L^\infty}\|\ti{c}\|_{L^2})\|\ti{c}_{rt}\|_{L^2}\\
\leq & C_0\va(\|c^0\|_{H^3}+\|c^\va_{rr}\|_{L^2}+\|c^\va_{r}\|_{L^2})\|\ti{c}_{rt}\|_{L^2}\\
&+C_0(\|c^\va\|_{H^1}\|\ti{w}_r\|_{L^2}
+\|\ti{w}\|_{H^1}\|c^\va_r\|_{L^2}
+\|w^0\|_{H^1}\|\ti{c}_r\|_{L^2}
+\|w^0\|_{H^2}\|\ti{c}\|_{L^2})\|\ti{c}_{rt}\|_{L^2}\\
\leq &\frac{1}{2}\|\ti{c}_{rt}\|_{L^2}^2
+C_0(\|c^\va\|_{H^1}^2\|\ti{w}\|_{H^1}^2+\|w^0\|_{H^2}^2
\|\ti{c}\|_{H^1}^2)
+C_0\va^2(\|c^0\|^2_{H^3}+\|c^\va_{rr}\|_{L^2}^2+\|c^\va_{r}\|_{L^2}^2).
\end{split}
\ee
Substituting the above estimates for $K_3$ and $K_4$ into \eqref{a90} we arrive at
\ben
\begin{split}
\va\frac{d}{dt}&\int_a^b \ti{c}_{rr}^2dr
+\int_a^b \ti{c}_{rt}^2dr\\
\leq &C_0(\|c^\va\|_{H^1}^2\|\ti{w}\|_{H^1}^2+\|w^0\|_{H^2}^2
\|\ti{c}\|_{H^1}^2)
+C_0\va^2(\|c^0\|^2_{H^3}+\|c^\va_{rr}\|_{L^2}^2+\|c^\va_{r}\|_{L^2}^2)\\
&+C_0\va^{\frac{1}{2}}(
\va^{\frac{1}{2}}\|c^\va_{rr}\|_{L^2}^2
+\va^{\frac{3}{2}}\|c^\va_{rrr}\|_{L^2}^2
+\ka^2\|c^\va_t\|_{H^1}^2
+\|c^0\|_{H^3}^2+\|c^0_t\|_{H^2}^2).
\end{split}
\enn
 Integrating the above inequality over $(0,T)$ and using \eqref{a0}, \eqref{a000} and \eqref{a60} to have
\be\label{a91}
\begin{split}
\sup_{0<t\leq T}\va \int_a^b \ti{c}_{rr}^2dr
+\int_0^T\int_a^b \ti{c}_{rt}^2drdt
\leq
C_{\lambda}(1+\exp\{\exp\{\exp\{\exp\{C_{\lambda}\ka^2\}\}\}\})^{3}\cdot
\va^{\frac{1}{2}}.
\end{split}
\ee
Taking the $L^2$ inner product of the first equation of \eqref{e4} with $\ti{w}_t$, one gets
\ben
\begin{split}
\frac{1}{2}\frac{d}{dt}\int_a^b \ti{w}_r^2 dr
+\int_a^b \ti{w}_t^2 dr
=&\int_{a}^b \frac{n-1}{r}\ti{w}_r\ti{w}_t dr
+\int_a^b (w^\va \ti{c}_r+\ti{w}c^0_r)\ti{w}_{rt} dr\\
&-\int_a^b\frac{n-1}{r}(w^\va \ti{c}_r+\ti{w}c^0_r)\ti{w}_{t} dr,
\end{split}
\enn
where
\ben
\int_{a}^b \frac{n-1}{r}\ti{w}_r\ti{w}_t dr
\leq \frac{1}{4}\|\ti{w}_t\|^2_{L^2}+C_0\|\ti{w}_r\|_{L^2}^2
\enn
and
\ben
\begin{split}
\int_a^b &(w^\va \ti{c}_r+\ti{w}c^0_r)\ti{w}_{rt} dr
-\int_a^b\frac{n-1}{r}(w^\va \ti{c}_r+\ti{w}c^0_r)\ti{w}_{t} dr\\
&\leq \frac{1}{4}\|\ti{w}_{rt}\|_{L^2}^2+\frac{1}{4}\|\ti{w}_t\|_{L^2}^2
+C_0(\|w^\va\|_{L^\infty}^2\|\ti{c}_r\|_{L^2}^2
+\|\ti{w}\|_{L^\infty}^2\|c^0_r\|_{L^2}^2)\\
&\leq \frac{1}{4}\|\ti{w}_{rt}\|_{L^2}^2+\frac{1}{4}\|\ti{w}_t\|_{L^2}^2
+C_0\|w^\va\|^2_{H^1}\|\ti{c}_r\|_{L^2}^2
+C_0\|\ti{w}\|_{H^1}^2\|c^0\|^2_{H^1}.
\end{split}
\enn
Thus
\be\label{a27}
\begin{split}
\frac{d}{dt}\int_a^b \ti{w}_r^2 dr
+\int_a^b \ti{w}_t^2 dr
\leq& \frac{1}{2}\|\ti{w}_{rt}\|_{L^2}^2
+C_0\|w^\va\|^2_{H^1}\|\ti{c}_r\|_{L^2}^2\\
&+C_0(\|c^0\|^2_{H^1}+1)\|\ti{w}\|^2_{H^1}.
\end{split}
\ee
Differentiating the first equation of \eqref{e4} with respect to $t$ to have
\ben
\ti{w}_{tt}=\ti{w}_{rrt}+\frac{n-1}{r}\ti{w}_{rt}
-(w^\va\ti{c}_r+\ti{w}c^0_r)_{rt}
-\frac{n-1}{r}(w^\va \ti{c}_r+\ti{w}c^0_r)_t,
\enn
which, multiplied with $\ti{w}_t$ in $L^2$ leads to
\ben
\begin{split}
\frac{1}{2}\frac{d}{dt}\int_a^b \ti{w}_t^2 dr
+\int_a^b \ti{w}_{rt}^2 dr
=&\int_a^b \frac{n-1}{r}\ti{w}_{rt}\ti{w}_t dr
+\int_a^b (w^\va \ti{c}_r+\ti{w}c^0_r)_t \ti{w}_{rt} dr\\
&-\int_a^b\frac{n-1}{r}(w^\va \ti{c}_r+\ti{w}c^0_r)_t\ti{w}_{t} dr\\
:=&K_5+K_6+K_7.
\end{split}
\enn
The Cauchy-Schwarz inequality entails that
\ben
K_5\leq \frac{1}{8}\|\ti{w}_{rt}\|_{L^2}^2
+C_0\|\ti{w}_{t}\|_{L^2}^2.
\enn
It follows from the Sobolev embedding inequality that
\ben
\begin{split}
K_6=&\int_a^b (w^\va_t \ti{c}_r+ w^\va \ti{c}_{rt} + \ti{w}_tc^0_r+ \ti{w}c^0_{rt}) \ti{w}_{rt} dr\\
\leq &\frac{1}{8}\|\ti{w}_{rt}\|_{L^2}^2
+C_0(\|w^\va_t\|_{L^\infty}^2\|\ti{c}_r\|_{L^2}^2
+\|w^\va\|_{L^\infty}^2\|\ti{c}_{rt}\|_{L^2}^2
+\|\ti{w}_t\|^2_{L^2}\|c^0_r\|_{L^\infty}^2
+\|\ti{w}\|_{L^\infty}^2 \|c^0_{rt}\|_{L^2}^2)\\
\leq &\frac{1}{8}\|\ti{w}_{rt}\|_{L^2}^2
+C_0(\|w^\va_t\|_{H^1}^2\|\ti{c}_r\|_{L^2}^2
+\|w^\va\|_{H^1}^2\|\ti{c}_{rt}\|_{L^2}^2
+\|\ti{w}_t\|^2_{L^2}\|c^0\|_{H^2}^2
+
\|\ti{w}\|_{H^1}^2 \|c^0_{t}\|^2_{H^1}).
\end{split}
\enn
By a similar argument used in estimating $K_6$, one gets
\ben
K_7
\leq \|\ti{w}_{t}\|_{L^2}^2
+C_0(\|w^\va_t\|_{H^1}^2\|\ti{c}_r\|_{L^2}^2
+\|w^\va\|_{H^1}^2\|\ti{c}_{rt}\|_{L^2}^2
+\|\ti{w}_t\|^2_{L^2}\|c^0\|_{H^2}^2
+
\|\ti{w}\|^2_{H^1}\|c^0_{t}\|^2_{H^1}).
\enn
Collecting the above estimates for $K_5$, $K_6$ and $K_7$ we obtain
\ben
\begin{split}
\frac{d}{dt}\int_a^b \ti{w}_t^2 dr
+\frac{3}{2}\int_a^b \ti{w}_{rt}^2 dr
\leq& C_0(1+\|c^0\|_{H^2}^2)\|\ti{w}_{t}\|_{L^2}^2
+C_0(\|w^\va_t\|_{H^1}^2\|\ti{c}_r\|_{L^2}^2
+\|w^\va\|_{H^1}^2\|\ti{c}_{rt}\|_{L^2}^2)\\
&+C_0
\|\ti{w}\|_{H^1}^2 \|c^0_{t}\|^2_{H^1},
\end{split}
\enn
which, added to \eqref{a27} yields
\be\label{a28}
\begin{split}
\frac{d}{dt}&\left[\int_a^b \ti{w}_t^2 dr
+\int_a^b \ti{w}_r^2 dr\right]
+\int_a^b \ti{w}_{t}^2 dr
+\int_a^b \ti{w}_{rt}^2 dr\\
& \leq C_0(1+\|c^0\|_{H^2}^2)\|\ti{w}_{t}\|_{L^2}^2
+C_0(\|w^\va_t\|_{H^1}^2+\|w^\va\|_{H^1}^2)\|\ti{c}_r\|_{L^2}^2\\
&\ \ \ +C_0(\|c^0_{t}\|^2_{H^1}+\|c^0\|^2_{H^1}+1)
\|\ti{w}\|_{H^1}^2
+C_0\|w^\va\|_{H^1}^2\|\ti{c}_{rt}\|_{L^2}^2.
\end{split}
\ee
Applying Gronwall's inequality to \eqref{a28}, and using \eqref{a0}, \eqref{a000}, \eqref{a60} and \eqref{a91} one gets \eqref{a66}.

We next prove \eqref{a67}. Since \eqref{a80} and \eqref{a81} hold true, from Lemma \ref{l0} we know that the boundary term $K_3$ in \eqref{a90} enjoys a better estimate as follows:
\be\label{a92}
\begin{split}
K_3=&\va[\ti{c}_{rr}\ti{c}_{rt}]\big|_{r=a}^{r=b}\\
=&-\ka\va(\ti{c}_{rr}\ti{c}_t)(b,t)-\ka\va(\ti{c}_{rr}\ti{c}_t)(a,t)\\
\leq &2\ka\va (\|c^\va_{rr}\|_{L^\infty}+\|c^0_{rr}\|_{L^\infty})
\|\ti{c}_t\|_{L^\infty}\\
\leq &C_0\ka\va(\|c^\va_{rr}\|_{L^2}
+\|c^\va_{rr}\|^{\frac{1}{2}}_{L^2}
\|c^\va_{rrr}\|^{\frac{1}{2}}_{L^2}
+\|c^0\|_{H^3})(\|\ti{c}_{rt}\|_{L^2}
+\|\ti{c}_t\|_{L^2})\\
\leq& \frac{1}{4}\|\ti{c}_{rt}\|_{L^2}^2
+\|\ti{c}_t\|^2_{L^2}
+C_0\ka^2\va^2(\|c^\va_{rr}\|_{L^2}^2+\|c^0\|_{H^3}^2)
+C_0\ka^2(\va^{\frac{3}{2}}\|c^\va_{rr}\|_{L^2}^2+
\va^{\frac{5}{2}}\|c^\va_{rrr}\|_{L^2}^2)\\
\leq & \frac{1}{4}\|\ti{c}_{rt}\|^2_{L^2}
+\|\ti{c}_t\|^2_{L^2}
+C_0\ka^2\va(\va^{\frac{1}{2}}\|c^\va_{rr}\|_{L^2}^2+
\va^{\frac{3}{2}}\|c^\va_{rrr}\|^2_{L^2}+\|c^0\|_{H^3}^2),
\end{split}
\ee
where in the last inequality we have used the fact that $0<\va<1$. Inserting \eqref{a92} and \eqref{a93} into \eqref{a90} one gets
\ben
\begin{split}
\va\frac{d}{dt}&\int_a^b \ti{c}_{rr}^2dr
+\int_a^b \ti{c}_{rt}^2dr\\
\leq &\|\ti{c}_t\|_{L^2}^2+C_0(\|c^\va\|_{H^1}^2\|\ti{w}\|_{H^1}^2+\|w^0\|_{H^2}^2
\|\ti{c}\|_{H^1}^2)
+C_0\va^2(\|c^0\|^2_{H^3}+\|c^\va_{rr}\|_{L^2}^2+\|c^\va_{r}\|_{L^2}^2)\\
&
+C_0\ka^2\va(\va^{\frac{1}{2}}\|c^\va_{rr}\|_{L^2}^2+
\va^{\frac{3}{2}}\|c^\va_{rrr}\|^2_{L^2}+\|c^0\|_{H^3}^2),
\end{split}
\enn
where the first term on the right-hand side can be estimated by using the second equation of \eqref{e4} as follows:
\ben
\|\ti{c}_t\|_{L^2}^2
\leq C_0\va^2(\|c^\va_{rr}\|^2_{L^2}+\|c^\va_{r}\|_{L^2}^2)
+C_0(\|c^\va\|_{H^1}^2\|\ti{w}\|_{H^1}^2
+\|w^0\|_{H^1}^2\|\ti{c}\|_{H^1}^2).
\enn
Thus,
\ben
\begin{split}
\va\frac{d}{dt}&\int_a^b \ti{c}_{rr}^2dr
+\int_a^b \ti{c}_{rt}^2dr\\
\leq &C_0(\|c^\va\|_{H^1}^2\|\ti{w}\|_{H^1}^2+\|w^0\|_{H^2}^2
\|\ti{c}\|_{H^1}^2)
+C_0\va^2(\|c^0\|^2_{H^3}+\|c^\va_{rr}\|^2_{L^2}+\|c^\va_{r}\|^2_{L^2})\\
&
+C_0\ka^2\va(\va^{\frac{1}{2}}\|c^\va_{rr}\|_{L^2}^2+
\va^{\frac{3}{2}}\|c^\va_{rrr}\|_{L^2}^2+\|c^0\|_{H^3}^2).
\end{split}
\enn
Integrating this over $(0,T)$, using \eqref{a0}, \eqref{a000} and \eqref{a61} we have
\be\label{a94}
\begin{split}
\sup_{0<t\leq T}\va \int_a^b \ti{c}_{rr}^2dr
+\int_0^T\int_a^b \ti{c}_{rt}^2drdt\leq
C_{\lambda}(1+\exp\{\exp\{\exp\{\exp\{C_{\lambda}\ka^2\}\}\}\})^{3}
\cdot\va.
\end{split}
\ee
Applying Gronwall's inequality to \eqref{a28} and using \eqref{a0}, \eqref{a000}, \eqref{a61} and \eqref{a94} one gets \eqref{a67}.
The proof is finished.

\end{proof}

\begin{lemma}\label{l8}
 Suppose $\ka>0$ and the assumptions in Theorem \ref{t2} hold. Let $(w^\va,c^\va)$ with $0\leq \va<1$ be the solutions of \eqref{e1}-\eqref{e2}, derived in Lemma \ref{l9}. Then there exists a constant $C_{\lambda}$ depending on $a$, $b$, $n$, $\|w_0\|_{H^2}$, $\|c_0\|_{H^3}$, $T$ and $\lambda$ such that
\ben
\begin{split}
\sup_{0<t\leq T}&\left[\int_a^b(r-a)^2(r-b)^2\ti{c}_{rr}^2(t)dr
+\int_a^b(r-a)^2(r-b)^2\ti{w}_{rr}^2(t)dr
\right]\\
&+\va\int_0^T\int_a^b(r-a)^2(r-b)^2\ti{c}_{rrr}^2drdt
\leq C_{\lambda}(1+\exp\{\exp\{\exp\{\exp\{C_{\lambda}
\ka^2\}\}\}\})^{6}\cdot\va^{\frac{1}{2}}.
\end{split}
\enn
\end{lemma}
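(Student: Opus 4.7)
Introduce the weight $\phi(r):=(r-a)^2(r-b)^2$, which satisfies $\phi(a)=\phi(b)=0$ and $\phi'(a)=\phi'(b)=0$. This weight is precisely what is needed to annihilate every boundary trace of $\ti c_r$ and $\ti c_{rr}$ that carries a $\ka$-dependent contribution incompatible with uniform $\va$-control. I would first establish the weighted bound on $\ti c_{rr}$ together with the $\va$-weighted dissipation for $\ti c_{rrr}$, and then recover the weighted $\ti w_{rr}$ bound directly from the parabolic equation for $\ti w$.

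\textbf{Estimate for $\ti c_{rr}$.} Take equation \eqref{a30} for $\ti c_{rt}$ and test it against $-\pt_r(\phi\,\ti c_{rr})$. Because $\phi(a)=\phi(b)=0$, the time-derivative side integrates by parts to $\tfrac{1}{2}\tfrac{d}{dt}\int\phi\,\ti c_{rr}^2\,dr$ with no boundary term. The leading spatial term splits, via $\pt_r(\phi\,\ti c_{rr})=\phi'\ti c_{rr}+\phi\,\ti c_{rrr}$, into the good dissipation $-\va\int\phi\,\ti c_{rrr}^2\,dr$ and a commutator $-\va\int\phi'\ti c_{rr}\ti c_{rrr}\,dr$; the latter, after a further integration by parts and using $\phi'(a)=\phi'(b)=0$, becomes $\tfrac{\va}{2}\int\phi''\ti c_{rr}^2\,dr$, bounded by $\va\|c^\va_{rr}\|_{L^2}^2+\va\|c^0_{rr}\|_{L^2}^2$ which is of the correct order through \eqref{a0} and \eqref{a000}. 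The remaining source terms $\va c^0_{rrr}$, $\va\tfrac{n-1}{r}c^\va_{rr}$, $\va\tfrac{n-1}{r^2}c^\va_r$ and $(\ti wc^\va+w^0\ti c)_r$ are absorbed by Cauchy--Schwarz and Sobolev embedding, producing contributions bounded by $\|\ti c\|_{H^1}^2$, $\|\ti w\|_{H^1}^2$ and $\va^2(\|c^0_{rrr}\|_{L^2}^2+\|c^\va_{rr}\|_{L^2}^2)$, all of order $\va^{1/2}$ by \eqref{a60}, \eqref{a66} and \eqref{a000}. Gronwall's inequality then delivers the bounds on $\sup_t\int\phi\,\ti c_{rr}^2$ and $\va\int_0^T\int\phi\,\ti c_{rrr}^2$.

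\textbf{Estimate for $\ti w_{rr}$.} Solve the first equation of \eqref{e4} algebraically for $\ti w_{rr}$, multiply by $\phi\,\ti w_{rr}$ and integrate in $r$; since no time derivative enters the left-hand side, this is an elliptic-regularity-type estimate producing a pointwise-in-$t$ bound. The delicate term $w^\va\ti c_{rr}$ arising in $(w^\va\ti c_r)_r$ is controlled by $\|w^\va\|_{L^\infty}^2\int\phi\,\ti c_{rr}^2\,dr$, which is handled via the previous step combined with $\|w^\va\|_{H^1}\leq C_\lambda(\cdots)$ from \eqref{a0}. Every remaining contribution involves $\ti w_t$, $\ti w_r$, $\ti c_r$, $\ti w$, $\ti c$, and is of order $\va^{1/2}$ by \eqref{a60} and \eqref{a66}.

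\textbf{Main obstacle.} The main difficulty is the bookkeeping of $\va$-powers so that the final rate is genuinely $\va^{1/2}$ rather than $\va^0$: every $\va$-prefactor in the source must be paired, via Young's inequality, against a $\tfrac{\va}{4}\int\phi\,\ti c_{rrr}^2$ reservoir so that the $\va c^0_{rrr}\cdot\phi\,\ti c_{rrr}$ and $\va c^\va_{rr}\cdot\phi\,\ti c_{rrr}$ contributions land on the absorbable side. The boundary-free remainder $\va\int\phi''\ti c_{rr}^2$ is the least regular piece and essentially forces the $\va^{1/2}$ scaling via \eqref{a0}. The asserted exponent $6$ on $(1+\exp\{\exp\{\exp\{\exp\{C_\lambda\ka^2\}\}\}\})$ arises from multiplying the $(\cdots)^2$ factor appearing in \eqref{a60} (entering via $\|\ti c\|_{H^1}^2$ and $\|\ti w\|_{H^1}^2$) with the $(\cdots)^4$ factor appearing in \eqref{a66} (entering via $\|\ti w_t\|_{L^2}^2$ and $\|\ti c_{rt}\|_{L^2}^2$), while the Gronwall factor from the differential inequality for $\int\phi\,\ti c_{rr}^2$ contributes only an $\exp\int(\|w^\va\|_{H^1}^2+\|w^0\|_{H^2}^2+1)\,dt$, which is bounded by Theorem \ref{t1}.
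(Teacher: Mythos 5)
Your overall strategy --- testing the equation for $\ti c_{rt}$ against a derivative of the weighted quantity, using $\phi=(r-a)^2(r-b)^2$ with $\phi(a)=\phi(b)=\phi'(a)=\phi'(b)=0$ to kill all boundary traces, and then recovering the weighted $\ti w_{rr}$ bound algebraically from the first equation of \eqref{e4} --- is the same as the paper's (the paper tests against $-\phi\,\ti c_{rrr}$ rather than $-\pt_r(\phi\,\ti c_{rr})$, which only shuffles commutator terms; your version trades the paper's $K_8$ for a harmless $\frac{\va}{2}\int\phi''\ti c_{rr}^2$ term, and both are of the right order). However, there is a genuine gap at the single hardest point of the lemma: the coupling term $(\ti w c^\va+w^0\ti c)_r$. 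As written, you propose to absorb it "by Cauchy--Schwarz and Sobolev embedding" into $\|\ti w\|_{H^1}^2+\|\ti c\|_{H^1}^2$, but your test function contains the piece $\phi\,\ti c_{rrr}$, and pairing $(\ti w c^\va+w^0\ti c)_r$ against it via Young's inequality with the reservoir $\frac{\va}{4}\int\phi\,\ti c_{rrr}^2$ leaves a complementary term $\frac{C}{\va}\int\phi\,|(\ti w c^\va+w^0\ti c)_r|^2\lesssim \va^{-1}\cdot\va^{1/2}=\va^{-1/2}$, which destroys the rate. The only way out is to integrate by parts so that this term pairs with $\phi\,\ti c_{rr}$ instead of $\phi\,\ti c_{rrr}$; but then the integrand contains $(\ti w c^\va+w^0\ti c)_{rr}$, hence $\ti w_{rr}c^\va$ and $\ti w c^\va_{rr}$, neither of which is controlled by $\|\ti w\|_{H^1}^2+\|\ti c\|_{H^1}^2$.

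This is precisely where the paper's proof does real work (the terms $K_{13}$ and $M_1$--$M_4$): the second-order factor $\ti w_{rr}$ is eliminated by substituting the first equation of \eqref{e4}, converting it into $\ti w_t$, $\ti w_r$ and lower-order quantities already controlled at order $\va^{1/2}$ by \eqref{a66} and \eqref{a60}; and the factor $c^\va_{rr}$ is split as $\ti c_{rr}+c^0_{rr}$ so that the $\ti c_{rr}$ part is reabsorbed into the weighted Gronwall quantity rather than estimated through $\va^{1/2}\|c^\va_{rr}\|_{L^2}^2\leq C$ (which would only give $O(1)$, not $O(\va^{1/2})$). Note also that your plan cannot first prove the weighted $\ti w_{rr}$ bound and then feed it into the $\ti c_{rr}$ estimate, since your own $\ti w_{rr}$ step uses the weighted $\ti c_{rr}$ bound; the substitution of the equation inside the $\ti c_{rr}$ estimate is what breaks this circularity. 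Without this step the claimed bound for the source term is incorrect, so the proposal as it stands does not close.
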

\begin{proof}
Taking the $L^2$ inner product of \eqref{a30} with  $-(r-a)^2(r-b)^2\ti{c}_{rrr}$ and using integration by parts, one gets
\be\label{a32}
\begin{split}
\frac{1}{2}\frac{d}{dt}&\int_a^b(r-a)^2(r-b)^2(\ti{c}_{rr})^2dr
+\va\int_a^b(r-a)^2(r-b)^2(\ti{c}_{rrr})^2dr\\
=&-2\int_a^b (r-a)(r-b)(2r-a-b)\ti{c}_{rr}\ti{c}_{rt}dr\\
&-2\int_a^b (r-a)(r-b)(2r-a-b)(\ti{w}c^\va+w^0\ti{c})_r\ti{c}_{rr}dr\\
&+\va\int_a^b\frac{n-1}{r^2}c^\va_r(r-a)^2(r-b)^2\ti{c}_{rrr}dr
-\va\int_a^b c^0_{rrr}(r-a)^2(r-b)^2\ti{c}_{rrr}dr\\
&-\va\int_a^b\frac{n-1}{r}c^\va_{rr}(r-a)^2(r-b)^2\ti{c}_{rrr}dr
-\int_a^b(\ti{w}c^\va+w^0\ti{c})_{rr}(r-a)^2(r-b)^2\ti{c}_{rr}dr\\
:=&\sum_{i=8}^{i=13}K_i.
\end{split}
\ee
It follows from the Cauchy-Schwarz inequality and Sobolev embedding inequality that
\ben
\begin{split}
K_8+K_9
\leq& \|(r-a)(r-b)\ti{c}_{rr}\|^2_{L^2}
+C_0\|\ti{c}_{rt}\|_{L^2}^2\\
&+C_0(\|\ti{w}_r\|^2_{L^2}\|c^\va\|_{L^\infty}^2
+\|\ti{w}\|_{L^\infty}^2\|c^\va_r\|^2_{L^2}
+\|w^0_r\|^2_{L^2}\|\ti{c}\|_{L^\infty}^2
+\|w^0\|_{L^\infty}^2\|\ti{c}_r\|^2_{L^2}
)\\
\leq &\|(r-a)(r-b)\ti{c}_{rr}\|^2_{L^2}
+C_0\|\ti{c}_{rt}\|^2_{L^2}
+C_0(\|\ti{w}\|_{H^1}^2\|c^\va\|_{H^1}^2
+\|w^0\|_{H^1}^2\|\ti{c}\|_{H^1}^2
)
\end{split}
\enn
and that
\ben
K_{10}+K_{11}\leq \frac{1}{4}\va \|(r-a)(r-b)\ti{c}_{rrr}\|_{L^2}^2
+C_0\va(\|c^\va_r\|_{L^2}^2+\|c^0\|_{H^3}^2).
\enn
From the definition of $\ti{c}$ we deduce that
\ben
\begin{split}
K_{12}=&-\va\int_a^b\frac{n-1}{r}\ti{c}_{rr}(r-a)^2(r-b)^2\ti{c}_{rrr}dr
-\va\int_a^b\frac{n-1}{r}c^0_{rr}(r-a)^2(r-b)^2\ti{c}_{rrr}dr\\
\leq &
\frac{1}{4}\va \|(r-a)(r-b)\ti{c}_{rrr}\|_{L^2}^2
+C_0\va (\|(r-a)(r-b)\ti{c}_{rr}\|_{L^2}^2+\|c^0\|_{H^2}^2).
\end{split}
\enn
A direct computation leads to
\be\label{a31}
\begin{split}
K_{13}=&\int_a^b
(r-a)^2(r-b)^2\ti{w}_{rr}c^\va\ti{c}_{rr}dr
+2\int_a^b
(r-a)^2(r-b)^2\ti{w}_r c^\va_r\ti{c}_{rr}dr\\
&+\int_a^b
(r-a)^2(r-b)^2\ti{w}c^\va_{rr}\ti{c}_{rr}dr\\
&+\int_a^b (w^0_{rr}\ti{c}+2w^0_r\ti{c}_r+w^0\ti{c}_{rr})
 (r-a)^2(r-b)^2\ti{c}_{rr}dr\\
 :=&\sum_{i=1}^{i=4}M_i.
\end{split}
\ee
By the first equation of \eqref{e4} and Sobolev embedding inequality, we estimate $M_1$ as follows:
\ben
\begin{split}
M_1=&\int_a^b \left(\ti{w}_t-\frac{n-1}{r}\ti{w}_r\right)
c^\va (r-a)^2(r-b)^2 \ti{c}_{rr}dr
+\int_a^b (\ti{w}_rc^0_r+\ti{w}c^0_{rr})c^\va(r-a)^2(r-b)^2 \ti{c}_{rr}dr\\
&+\int_a^b \frac{n-1}{r}(w^\va \ti{c}_r+\ti{w}c^0_r)
c^\va(r-a)^2(r-b)^2 \ti{c}_{rr}dr\\
&+\int_a^b(w^\va_r\ti{c}_r+w^\va\ti{c}_{rr})c^\va
(r-a)^2(r-b)^2 \ti{c}_{rr}dr\\
\leq& C_0(\|\ti{w}_t\|_{L^2}+\|\ti{w}_r\|_{L^2}
+\|\ti{w}_r\|_{L^2}\|c^0_r\|_{L^\infty}
+\|\ti{w}\|_{L^\infty}\|c^0_{rr}\|_{L^2})
\|c^\va\|_{L^\infty}\|(r-a)(r-b)\ti{c}_{rr}\|_{L^2}\\
&+C_0(
\|w^\va\|_{L^\infty}\|\ti{c}_r\|_{L^2}
+\|\ti{w}\|_{L^\infty}\|c^0_r\|_{L^2})\|c^\va\|_{L^\infty}
\|(r-a)(r-b)\ti{c}_{rr}\|_{L^2}\\
&+C_0\|w^\va_r\|_{L^2}
\|(r-a)(r-b)\ti{c}_{r}\|_{L^\infty}
\|c^\va\|_{L^\infty}\|(r-a)(r-b)\ti{c}_{rr}\|_{L^2}\\
&+C_0\|w^\va\|_{L^\infty}
\|c^\va\|_{L^\infty}\|(r-a)(r-b)\ti{c}_{rr}\|_{L^2}^2\\
\leq& C_0(\|\ti{w}_t\|_{L^2}+\|\ti{w}_r\|_{L^2}
+\|\ti{w}\|_{H^1}\|c^0\|_{H^2}
)\|c^\va\|_{L^\infty}
\|(r-a)(r-b)\ti{c}_{rr}\|_{L^2}\\
&+C_0(
\|w^\va\|_{H^1}\|\ti{c}_r\|_{L^2}
+\|\ti{w}\|_{H^1}\|c^0_r\|_{L^2})\|c^\va\|_{L^\infty}
\|(r-a)(r-b)\ti{c}_{rr}\|_{L^2}\\
&+C_0\|w^\va_r\|_{L^2}
(\|\ti{c}_{r}\|_{L^2}+\|(r-a)(r-b)\ti{c}_{rr}\|_{L^2})
\|c^\va\|_{L^\infty}\|(r-a)(r-b)\ti{c}_{rr}\|_{L^2}\\
&+C_0\|w^\va\|_{H^1}\|c^\va\|_{L^\infty}
\|(r-a)(r-b)\ti{c}_{rr}\|_{L^2}^2\\
\leq & C_0(\|w^\va\|_{H^1}\|c^\va\|_{L^\infty}
+\|c^\va\|_{L^\infty}^2)\|(r-a)(r-b)\ti{c}_{rr}\|_{L^2}^2\\
&+C_0(\|\ti{w}_t\|_{L^2}^2+\|\ti{w}_r\|_{L^2}^2
+\|\ti{w}\|_{H^1}^2\|c^0\|_{H^2}^2
+\|w^\va\|_{H^1}^2\|\ti{c}_r\|_{L^2}^2).
\end{split}
\enn
The
Sobolev embedding inequality gives
\ben
\begin{split}
M_2=&2\int_a^b \ti{w}_r\ti{c}_r(r-a)^2(r-b)^2\ti{c}_{rr}dr
+2\int_a^b \ti{w}_r c^0_r(r-a)^2(r-b)^2\ti{c}_{rr}dr\\
\leq &2\|\ti{w}_r\|_{L^2} \|(r-a)(r-b)\ti{c}_r\|_{L^\infty}
\|(r-a)(r-b)\ti{c}_{rr}\|_{L^2}
+C_0\|\ti{w}_r\|_{L^2} \|c^0_r\|_{L^\infty}
\|(r-a)(r-b)\ti{c}_{rr}\|_{L^2}\\
\leq &C_0\|\ti{w}_r\|_{L^2}( \|\ti{c}_r\|_{L^2}+\|(r-a)(r-b)\ti{c}_{rr}\|_{L^2})
\|(r-a)(r-b)\ti{c}_{rr}\|_{L^2}\\
&+C_0\|\ti{w}_r\|_{L^2} \|c^0\|_{H^2}
\|(r-a)(r-b)\ti{c}_{rr}\|_{L^2}\\
\leq& C_0(\|\ti{w}_r\|_{L^2}+1)\|(r-a)(r-b)\ti{c}_{rr}\|^2_{L^2}
+C_0(\|\ti{w}_r\|^2_{L^2} \|\ti{c}_r\|_{L^2}^2+\|\ti{w}_r\|_{L^2}^2\|c^0\|_{H^2}^2)
\\
\leq& C_0(\|w^\va_r\|_{L^2}+\|w^0_r\|_{L^2}+1)
\|(r-a)(r-b)\ti{c}_{rr}\|_{L^2}^2
+C_0(\|\ti{w}_r\|_{L^2}^2 \|\ti{c}_r\|_{L^2}^2+\|\ti{w}_r\|_{L^2}^2\|c^0\|_{H^2}^2).
\end{split}
\enn
It follows from the Sobolev embedding inequality that
\ben
\begin{split}
M_3=&\int_a^b \ti{w}\ti{c}_{rr}(r-a)^2(r-b)^2\ti{c}_{rr}dr
+\int_a^b \ti{w}c^0_{rr}(r-a)^2(r-b)^2\ti{c}_{rr}dr\\
\leq &\|\ti{w}\|_{L^\infty}\|(r-a)(r-b)\ti{c}_{rr}\|^2_{L^2}
+C_0\|\ti{w}\|_{L^\infty}\|c^0\|_{H^2}\|(r-a)(r-b)\ti{c}_{rr}\|_{L^2}\\
\leq &(\|\ti{w}\|_{H^1}+1)\|(r-a)(r-b)\ti{c}_{rr}\|_{L^2}^2
+C_0\|c^0\|_{H^2}^2\|\ti{w}\|_{H^1}^2\\
\leq &(\|w^\va\|_{H^1}+\|w^0\|_{H^1}+1)\|(r-a)(r-b)\ti{c}_{rr}\|_{L^2}^2
+C_0\|c^0\|_{H^2}^2\|\ti{w}\|_{H^1}^2.
\end{split}
\enn
The Cauchy-Schwarz inequality and Sobolev embedding inequality entail that
\ben
\begin{split}
M_4
\leq& C_0\|w^0\|_{H^2}\|\ti{c}\|_{H^1}\|(r-a)(r-b)\ti{c}_{rr}\|_{L^2}
+\|w^0\|_{L^\infty}\|(r-a)(r-b)\ti{c}_{rr}\|_{L^2}^2\\
\leq &C_0(\|w^0\|_{H^1}+1)\|(r-a)(r-b)\ti{c}_{rr}\|_{L^2}^2
+C_0\|w^0\|_{H^2}^2\|\ti{c}\|_{H^1}^2.
\end{split}
\enn
Substituting the above estimates for $M_1$-$M_4$ into \eqref{a31}, one gets
\be\label{a73}
\begin{split}
K_{13}\leq& C_0(\|w^0\|_{H^1}+\|w^\va\|_{H^1}+\|w^\va\|_{H^1}\|c^\va\|_{L^\infty}
+\|c^\va\|_{L^\infty}^2+1)\|(r-a)(r-b)\ti{c}_{rr}\|_{L^2}^2\\
&+C_0(\|\ti{w}_t\|^2_{L^2}+\|\ti{w}_r\|_{L^2}^2
+\|\ti{w}\|_{H^1}^2\|c^0\|_{H^2}^2
+\|w^\va\|_{H^1}^2\|\ti{c}_r\|_{L^2}^2)\\
&+C_0(\|\ti{w}_r\|_{L^2}^2 \|\ti{c}_r\|_{L^2}^2+\|w^0\|_{H^2}^2\|\ti{c}\|_{H^1}^2).
\end{split}
\ee
Plugging the above estimates for $K_8$-$K_{13}$ into \eqref{a32} and using the fact that $0<\va<1$, we end up with
\ben
\begin{split}
\frac{d}{dt}&\int_a^b(r-a)^2(r-b)^2\ti{c}_{rr}^2dr
+\va\int_a^b(r-a)^2(r-b)^2\ti{c}_{rrr}^2dr\\
\leq &C_0(\|w^0\|_{H^1}+\|w^\va\|_{H^1}+\|w^\va\|_{H^1}\|c^\va\|_{L^\infty}
+\|c^\va\|_{L^\infty}^2+1)\|(r-a)(r-b)\ti{c}_{rr}\|_{L^2}^2\\
&+C_0(\|\ti{c}_{rt}\|_{L^2}^2
+\|\ti{w}\|_{H^1}^2\|c^\va\|_{H^1}^2
+\|w^0\|_{H^2}^2\|\ti{c}\|_{H^1}^2
)+C_0\va(\|c^\va_r\|_{L^2}^2+\|c^0\|_{H^3}^2)\\
&+C_0(\|\ti{w}_t\|^2_{L^2}+\|\ti{w}_r\|_{L^2}^2
+\|\ti{w}\|_{H^1}^2\|c^0\|_{H^2}^2
+\|w^\va\|_{H^1}^2\|\ti{c}_r\|_{L^2}^2
+\|\ti{w}_r\|^2_{L^2} \|\ti{c}_r\|_{L^2}^2),
\end{split}
\enn
which, along with Gronwall's inequality, \eqref{a0}, \eqref{a000}, \eqref{a1}, \eqref{a60} and \eqref{a66} entails that
\be\label{a33}
\begin{split}
\sup_{0<t\leq T}&\int_a^b(r-a)^2(r-b)^2\ti{c}_{rr}^2(t)dr
+\va\int_0^T\int_a^b(r-a)^2(r-b)^2\ti{c}_{rrr}^2drdt\\
\leq& C_{\lambda}(1+\exp\{\exp\{\exp\{\exp\{C_{\lambda}
\ka^2\}\}\}\})^{5}\cdot\va^{\frac{1}{2}},
\end{split}
\ee
where the constant $C_{\lambda}$ depends on $a$, $b$, $n$, $\|w_0\|_{H^2}$, $\|c_0\|_{H^3}$, $T$ and $\lambda$.
It remains to estimate $\sup\limits_{0<t\leq T}\int_a^b(r-a)^2(r-b)^2\ti{w}_{rr}^2(t)dr$. From the first equation of \eqref{e4}, one deduces that
\be\label{a34}
\begin{split}
\int_a^b(&r-a)^2(r-b)^2\ti{w}_{rr}^2dr\\
\leq &C_0\int_a^b (r-a)^2(r-b)^2 (w^\va_r)^2 \ti{c}_r^2dr
+C_0\int_a^b (r-a)^2(r-b)^2 (w^\va)^2 \ti{c}_{rr}^2dr\\
&+C_0\int_a^b(r-a)^2(r-b)^2[\ti{w}_t^2+\ti{w}_r^2
+\ti{w}_r^2(c^0_r)^2+\ti{w}^2(c^0_{rr})^2
+(w^\va)^2\ti{c}_r^2+\ti{w}^2(c^0_r)^2]dr\\
:=&K_{14}+K_{15}+K_{16}.
\end{split}
\ee
The Sobolve embedding inequality and Cauchy-Schwarz inequality lead to
\ben
\begin{split}
K_{14}+K_{15}
\leq &C_0\|w^\va_r\|_{L^2}^2\|(r-a)(r-b)\ti{c}_r\|_{L^\infty}^2
+C_0\|w^\va\|_{L^\infty}^2\|(r-a)(r-b)\ti{c}_{rr}\|_{L^2}^2\\
\leq &C_0\|w^\va_r\|_{L^2}^2(\|\ti{c}_r\|_{L^2}^2+\|(r-a)(r-b)\ti{c}_{rr}\|_{L^2}^2)
+C_0\|w^\va\|_{H^1}^2\|(r-a)(r-b)\ti{c}_{rr}\|_{L^2}^2\\
\leq &C_0\|w^\va\|^2_{H^1}(\|\ti{c}_r\|_{L^2}^2+\|(r-a)(r-b)\ti{c}_{rr}\|_{L^2}^2)
\end{split}
\enn
and
\ben
\begin{split}
K_{16}\leq
&
C_0(\|\ti{w}_t\|_{L^2}^2+\|\ti{w}_r\|_{L^2}^2)
+C_0(\|\ti{w}_r\|_{L^2}^2\|c^0_r\|_{L^\infty}^2
+\|\ti{w}\|_{L^\infty}^2\|c^0_{rr}\|_{L^2}^2)\\
&+C_0(\|w^\va\|_{L^\infty}^2 \|\ti{c}_r\|_{L^2}^2
+\|\ti{w}\|_{L^\infty}^2\|c^0_r\|_{L^2}^2)\\
\leq
&C_0(\|\ti{w}_t\|_{L^2}^2+\|\ti{w}_r\|_{L^2}^2)
+C_0(\|\ti{w}\|_{H^1}^2\|c^0\|_{H^2}^2
+\|w^\va\|_{H^1}^2 \|\ti{c}_r\|_{L^2}^2).\\
\end{split}
\enn
Plugging the above estimates for $K_{14}$-$K_{16}$ into \eqref{a34}, we arrive at
\be\label{a75}
\begin{split}
\int_a^b(r-a)^2(r-b)^2\ti{w}_{rr}^2dr
\leq& C_0
\|w^\va\|^2_{H^1}(\|\ti{c}_r\|_{L^2}^2+\|(r-a)(r-b)\ti{c}_{rr}\|_{L^2}^2)\\
&+C_0(\|\ti{w}\|_{H^1}^2\|c^0\|_{H^2}^2
+\|w^\va\|_{H^1}^2 \|\ti{c}_r\|_{L^2}^2)\\
&+C_0(\|\ti{w}_t\|_{L^2}^2+\|\ti{w}_r\|_{L^2}^2),
\end{split}
\ee
which, in conjunction with \eqref{a0}, \eqref{a000},  \eqref{a60}, \eqref{a66} and \eqref{a33} entails that
\be\label{b64}
\sup_{0<t\leq T}\int_a^b(r-a)^2(r-b)^2\ti{w}_{rr}^2(t)dr\leq C_{\lambda}(1+\exp\{\exp\{\exp\{\exp\{C_{\lambda}\ka^2\}\}\}\})^{6}\cdot\va^{\frac{1}{2}}.
\ee
Collecting \eqref{a33} and \eqref{a75} we obtain the desired estimates. The proof is completed.

\end{proof}

\begin{lemma}
 Suppose $\ka>0$ and the assumptions in Theorem \ref{t2} hold. Let $(w^\va,c^\va)$ with $0\leq \va<1$ be the solutions of \eqref{e1}-\eqref{e2}, derived in Lemma \ref{l9}. Assume further for each $t\in [0,T]$ that \eqref{a80} and \eqref{a81} hold true.
Then there exists a constant $C_{\lambda}$ depending on $a$, $b$, $n$, $\|w_0\|_{H^2}$, $\|c_0\|_{H^3}$, $T$ and $\lambda$ such that
\be\label{a95}
\begin{split}
& \sup_{0<t\leq T}\left[\int_a^b\ti{c}_{rr}^2(t)dr
+\int_a^b\ti{w}_{rr}^2(t)dr
\right]
+\va\int_0^T\int_a^b\ti{c}_{rrr}^2drdt\\
&\ \ \leq C_{\lambda}(1+\exp\{\exp\{\exp\{\exp\{C_{\lambda}\ka^2\}\}\}\})^{6}\cdot\va\\
&\ \ \ \ \ \ +C_{\lambda}\ka^2(1+\exp\{\exp\{\exp\{\exp\{C_{\lambda}\ka^2\}\}\}\})^{6}
\cdot\va^{\frac{1}{2}}.
\end{split}
\ee
\end{lemma}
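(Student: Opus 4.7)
The plan is to mimic the argument of Lemma~\ref{l8} with the boundary cutoff $(r-a)^2(r-b)^2$ removed, using the improved boundary identities \eqref{a82}-\eqref{a83} (available thanks to the extra hypotheses \eqref{a80}-\eqref{a81}) to absorb the boundary contributions that the cutoff previously annihilated. Concretely, I would test the differentiated equation \eqref{a30} against $-\ti{c}_{rrr}$ in $L^2(a,b)$ and, after one further integration by parts on the $(\ti{w}c^\va+w^0\ti{c})_r\ti{c}_{rrr}$ piece, arrive at an identity of the form
\[
\frac{1}{2}\frac{d}{dt}\|\ti{c}_{rr}\|_{L^2}^2+\va\|\ti{c}_{rrr}\|_{L^2}^2=\big[\ti{c}_{rt}\ti{c}_{rr}\big]_a^b-\big[(\ti{w}c^\va+w^0\ti{c})_r\ti{c}_{rr}\big]_a^b+\mathcal{I},
\]
where the interior remainder $\mathcal{I}$ is handled in direct analogy with $K_{12}$-$K_{13}$ in \eqref{a31}-\eqref{a73}: I split $c^\va_{rr}=\ti{c}_{rr}+c^0_{rr}$ so that the $c^0$-piece is an $O(\va)$ bulk contribution by \eqref{a000}, while the $\ti{c}_{rr}$-piece generates a $C\va\|\ti{c}_{rr}\|_{L^2}^2$ term that feeds back into the Gronwall loop; the $(\ti{w}c^\va+w^0\ti{c})_{rr}\ti{c}_{rr}$ integral is bounded via the Sobolev embedding on $[a,b]$ in place of the factor $(r-a)(r-b)$ used in Lemma~\ref{l8}.

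The crucial new step is the treatment of the two boundary terms. For the first, Lemma~\ref{l0} provides $\ti{c}_{rt}(a,t)=\ka\ti{c}_t(a,t)$ and $\ti{c}_{rt}(b,t)=-\ka\ti{c}_t(b,t)$, so Sobolev and Gagliardo-Nirenberg together yield
\[
\big|[\ti{c}_{rt}\ti{c}_{rr}]_a^b\big|\leq 2\ka\|\ti{c}_t\|_{L^\infty}\|\ti{c}_{rr}\|_{L^\infty}\leq C\ka\|\ti{c}_t\|_{H^1}\big(\|\ti{c}_{rr}\|_{L^2}+\|\ti{c}_{rr}\|_{L^2}^{1/2}\|\ti{c}_{rrr}\|_{L^2}^{1/2}\big).
\]
I would then split off $\tfrac{\va}{8}\|\ti{c}_{rrr}\|_{L^2}^2$ into the dissipative side via Young's inequality at the cost of a remainder of size $C\ka^2\va^{-1/2}\|\ti{c}_t\|_{H^1}^2+C\|\ti{c}_{rr}\|_{L^2}^2$. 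For the second boundary term, the no-flux relation $\ti{w}_r|_\partial=w^\va\ti{c}_r|_\partial+\ti{w}c^0_r|_\partial$ combined with $\ti{c}_r|_\partial=\pm\ka\ti{c}|_\partial$ rewrites $(\ti{w}c^\va+w^0\ti{c})_r|_\partial$ as a linear combination of the boundary traces of $\ti{w}$ and $\ti{c}$ with coefficients bounded uniformly in $\va$ through \eqref{a0} and \eqref{a000}; the Sobolev embedding $H^1\hookrightarrow L^\infty$ together with the smallness $\|\ti{w}\|_{H^1}^2+\|\ti{c}\|_{H^1}^2=O(\va)$ from \eqref{a61} then yields $|(\ti{w}c^\va+w^0\ti{c})_r|_\partial|\leq C(1+\ka)\va^{1/2}$, after which an identical Gagliardo-Nirenberg/Young scheme disposes of the product with $\ti{c}_{rr}|_\partial$.

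The assembled differential inequality reads
\[
\frac{d}{dt}\|\ti{c}_{rr}\|_{L^2}^2+\va\|\ti{c}_{rrr}\|_{L^2}^2\leq F(t)\|\ti{c}_{rr}\|_{L^2}^2+G(t),
\]
with $F\in L^1(0,T)$ controlled via \eqref{a0} and \eqref{a000}, and with $\int_0^T G(t)\,dt$ splitting into an $O(\va)$ bulk contribution (from the interior remainder and the $c^0$-type pieces) and an $O(\ka^2\va^{1/2})$ boundary contribution: the apparent $\va^{-1/2}$ penalty from Young's inequality is exactly balanced by the bound $\int_0^T\|\ti{c}_t\|_{H^1}^2\,dt\lesssim\va$, which follows from \eqref{a67} together with the pointwise expression for $\|\ti{c}_t\|_{L^2}^2$ read off from the second equation of \eqref{e4} and \eqref{a61}. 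Gronwall's inequality then delivers the claimed bound on $\sup_t\|\ti{c}_{rr}\|_{L^2}^2$ and $\va\int_0^T\|\ti{c}_{rrr}\|_{L^2}^2\,dt$. The $\|\ti{w}_{rr}\|_{L^2}$ estimate is finally obtained algebraically from the first equation of \eqref{e4}, by solving for $\ti{w}_{rr}$ and estimating term by term in parallel with \eqref{a34}-\eqref{a75} of Lemma~\ref{l8}, but without the cutoff weight and invoking the freshly proved bound on $\|\ti{c}_{rr}\|_{L^2}^2$.

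The principal technical difficulty is the precise calibration of Young's inequality on $[\ti{c}_{rt}\ti{c}_{rr}]|_a^b$: the $\va^{-1/2}$ loss incurred in splitting $\|\ti{c}_{rrr}\|_{L^2}^{1/2}$ off into the dissipation must be cancelled exactly by the $O(\va)$ size of $\int_0^T\|\ti{c}_t\|_{H^1}^2\,dt$ supplied by \eqref{a67}. Without the improved boundary identities \eqref{a82}-\eqref{a83}, one would instead face the bare trace $\ti{c}_{rt}|_\partial$ paired with the Robin data $\ka[\lambda-c^\va|_\partial]$ through \eqref{e4}, and the cancellation needed to close the estimate at order $\va^{1/2}$ would fail.
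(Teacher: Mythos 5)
Your proposal follows essentially the same route as the paper: an unweighted energy estimate for $\ti{c}_{rr}$ obtained by testing \eqref{a30} against $\ti{c}_{rrr}$, with the two boundary terms controlled via the identities \eqref{a82}--\eqref{a83} from Lemma \ref{l0} together with the no-flux and Robin relations, the $\va^{-1/2}$ loss from Young's inequality compensated exactly by the $O(\va)$ bounds in \eqref{a61} and \eqref{a67}, and $\ti{w}_{rr}$ recovered algebraically from the first equation of \eqref{e4}. This matches the paper's decomposition into $K_{17}$--$K_{20}$ (and $G_1$--$G_4$) step for step, so the proposal is correct and essentially identical in approach.
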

\begin{proof}
We first compute $c^0_r(a,t)$ and $c^0_r(b,t)$ for later use. Differentiating the second equation of \eqref{e3} with respect to $r$ and using the boundary conditions one deduces that
\ben
\begin{split}
c^0_{rt}(a,t)=-w^0_r(a,t)c^0(a,t)-w^0(a,t)c^0_r(a,t)
=-c^0_r(a,t)[w^0(a,t)c^0(a,t)+w^0(a,t)],
\end{split}
\enn
which, multiplied with $e^{\int_0^t [w^0(a,\tau)c^0(a,\tau)+w^0(a,\tau)]d\tau}$
gives rise to
\be\label{a97}
\frac{d}{dt}\left\{
c^0_r(a,t) e^{\int_0^t [w^0(a,\tau)c^0(a,\tau)+w^0(a,\tau)]d\tau}
\right\}=0.
\ee
 Using the compatibility condition $c_{0r}(a)=-\ka [\lambda-c_0(a)]$, one integrates \eqref{a97} over $(0,t)$ to have
\be\label{a98}
c^0_r(a,t)=-\ka [\lambda-c_0(a)]e^{-\int_0^t [w^0(a,\tau)c^0(a,\tau)+w^0(a,\tau)]d\tau}, \qquad \forall\ \ t\geq 0.
\ee
A similar argument used in deriving \eqref{a98} further leads to
\be\label{a99}
c^0_r(b,t)=\ka [\lambda-c_0(b)]e^{-\int_0^t [w^0(b,\tau)c^0(b,\tau)+w^0(b,\tau)]d\tau}, \qquad \forall\ \ t\geq 0.
\ee

Taking the $L^2$ inner product of \eqref{a30} with $-2\ti{c}_{rrr}$, one gets by integration by parts that
\be\label{a100}
\begin{split}
\frac{d}{dt}&\int_a^b\ti{c}_{rr}^2dr
+2\va\int_a^b\ti{c}_{rrr}^2dr\\
=&-2\va\int_a^b \left(c^0_{rrr}+\frac{n-1}{r}c^{\va}_{rr}
-\frac{n-1}{r^2}c^\va_r
\right)\ti{c}_{rrr}dr
-2\int_a^b (\ti{w}c^\va+w^0\ti{c})_{rr}\ti{c}_{rr}dr\\
&+2[\ti{c}_{rt}\ti{c}_{rr}]\big|_{r=a}^{r=b}
+2[(\ti{w}c^\va+w^0\ti{c})_{r}\ti{c}_{rr}]
\big|_{r=a}^{r=b}\\
:=&\sum_{i=17}^{20}K_i.
\end{split}
\ee
It follows from the Cauchy-Schwarz inequality that
\be\label{c31}
\begin{split}
K_{17}=&-2\va\int_a^b \left(c^0_{rrr}+\frac{n-1}{r}\ti{c}_{rr}
+\frac{n-1}{r}c^{0}_{rr}
-\frac{n-1}{r^2}c^\va_r
\right)\ti{c}_{rrr}dr\\
\leq &\frac{1}{8}\va \|\ti{c}_{rrr}\|^2_{L^2}
+C_0\va(\|c^0\|_{H^3}^2+\|\ti{c}_{rr}\|_{L^2}^2+\|c^\va_r\|_{L^2}^2).
\end{split}
\ee
By a similar argument used in deriving \eqref{a73}, one deduces that
\be\label{c32}
\begin{split}
K_{18}\leq& C_0(\|w^0\|_{H^1}+\|w^\va\|_{H^1}+\|w^\va\|_{H^1}\|c^\va\|_{L^\infty}
+\|c^\va\|_{L^\infty}^2+1)\|\ti{c}_{rr}\|_{L^2}^2\\
&+C_0(\|\ti{w}_t\|_{L^2}^2+\|\ti{w}_r\|_{L^2}^2
+\|\ti{w}\|_{H^1}^2\|c^0\|_{H^2}^2
+\|w^\va\|_{H^1}^2\|\ti{c}_r\|_{L^2}^2)\\
&+C_0(\|\ti{w}_r\|_{L^2}^2 \|\ti{c}_r\|_{L^2}^2+\|w^0\|_{H^2}^2\|\ti{c}\|_{H^1}^2).
\end{split}
\ee
 It follows from \eqref{a82}, \eqref{a83} and the Gagliardo-Nirenberg interpolation inequality that
\ben
\begin{split}
K_{19}=&-2\ka \ti{c}_t(b,t)\ti{c}_{rr}(b,t)
-2\ka \ti{c}_t(a,t)\ti{c}_{rr}(a,t)\\
\leq &4\ka \|\ti{c}_t\|_{L^\infty}\|\ti{c}_{rr}\|_{L^\infty}\\
\leq &C_0\ka \|\ti{c}_t\|_{H^1}(\|\ti{c}_{rr}\|_{L^2}
+\|\ti{c}_{rr}\|^{\frac{1}{2}}_{L^2}\|\ti{c}_{rrr}\|^{\frac{1}{2}}_{L^2})\\
\leq &\frac{1}{8}\va \|\ti{c}_{rrr}\|^2_{L^2}+\|\ti{c}_{rr}\|^2_{L^2}
+C_0\ka^2(1+\va^{-\frac{1}{2}})\|\ti{c}_t\|_{H^1}^2.
\end{split}
\enn
We rewrite $K_{20}$ as follows:
\ben
\begin{split}
K_{20}&=2[\ti{w}_rc^\va \ti{c}_{rr}]\big|_{r=a}^{r=b}
+2[\ti{w}c^\va_r \ti{c}_{rr}]\big|_{r=a}^{r=b}
+2[w^0_r\ti{c} \ti{c}_{rr}]\big|_{r=a}^{r=b}
+2[w^0\ti{c}_r \ti{c}_{rr}]\big|_{r=a}^{r=b}\\
&:=G_1+G_2+G_3+G_4.
\end{split}
\enn
From \eqref{a79}, \eqref{b27}, the nonnegativity of $w^0$, $c^0$ and Sobolev embedding inequality we deduce that
\be\label{b28}
|c^0_r(a,t)|+|c^0_r(b,t)|\leq \kappa [2\lambda+|c_0(a)|+|c_0(b)|]
\leq C_0\kappa(\lambda+\|c_0\|_{L^\infty})
\leq C_0\kappa(\lambda+\|c_0\|_{H^1})
\ee
for each $t>0$.
It follows from the first two boundary conditions in \eqref{e4}, \eqref{a82}, \eqref{a83}, \eqref{a1}, \eqref{b28} and the Gagliardo-Nirenberg interpolation inequality that
\ben
\begin{split}
G_1=&2[w^\va \ti{c}_rc^\va \ti{c}_{rr}]\big|_{r=a}^{r=b}
+2[\ti{w} c^0_rc^\va \ti{c}_{rr}]\big|_{r=a}^{r=b}\\
=&-2\ka[(w^\va \ti{c}\,c^\va\ti{c}_{rr})(b,t)
+(w^\va \ti{c}\,c^\va\ti{c}_{rr})(a,t)]
+2[\ti{w} c^0_rc^\va \ti{c}_{rr}]\big|_{r=a}^{r=b}
\\
\leq& 4\ka \|w^\va\|_{L^\infty}\|\ti{c}\|_{L^\infty}
\|c^\va\|_{L^\infty}\|\ti{c}_{rr}\|_{L^\infty}
+C_0\ka \|\ti{w}\|_{L^\infty}(\lambda+\|c_0\|_{H^1})
\|c^\va\|_{L^\infty}\|\ti{c}_{rr}\|_{L^\infty}\\
\leq& C_0\ka \|w^\va\|_{H^1}\|\ti{c}\|_{H^1}
(\lambda+\|c_0\|_{L^\infty})(\|\ti{c}_{rr}\|_{L^2}
+\|\ti{c}_{rr}\|^{\frac{1}{2}}_{L^2}\|\ti{c}_{rrr}\|^{\frac{1}{2}}_{L^2}
)\\
&+C_0\ka \|\ti{w}\|_{H^1}(\lambda+\|c_0\|_{H^1})
(\lambda+\|c_0\|_{L^\infty})(\|\ti{c}_{rr}\|_{L^2}
+\|\ti{c}_{rr}\|^{\frac{1}{2}}_{L^2}\|\ti{c}_{rrr}\|^{\frac{1}{2}}_{L^2}
)\\
\leq & \frac{1}{8}\va \|\ti{c}_{rrr}\|^2_{L^2}
+\|\ti{c}_{rr}\|^2_{L^2}
+C_{0}(\lambda+\|c_0\|_{H^1}+1)^4\ka^2(1+\va^{-\frac{1}{2}})
(\|w^\va\|_{H^1}^2\|\ti{c}\|_{H^1}^2+\|\ti{w}\|_{H^1}^2).
\end{split}
\enn
\eqref{a82}, \eqref{a83}, \eqref{b28} and the Gagliardo-Nirenberg interpolation inequality leads to
\ben
\begin{split}
G_2=&2[\ti{w} \ti{c}_r \ti{c}_{rr}]\big|_{r=a}^{r=b}
+2[\ti{w} c^0_r \ti{c}_{rr}]\big|_{r=a}^{r=b}\\
=&-2\ka[(\ti{w} \ti{c}\ti{c}_{rr})(b,t)+(\ti{w} \ti{c}\ti{c}_{rr})(a,t)]
+2[\ti{w} c^0_r \ti{c}_{rr}]\big|_{r=a}^{r=b}\\
\leq& 4\ka \|\ti{w}\|_{L^\infty}\|\ti{c}\|_{L^\infty}
\|\ti{c}_{rr}\|_{L^\infty}
+C_0\ka \|\ti{w}\|_{L^\infty}(\lambda+\|c_0\|_{H^1})
\|\ti{c}_{rr}\|_{L^\infty}\\
\leq& C_0\ka \|\ti{w}\|_{H^1}\|\ti{c}\|_{H^1}
(\|\ti{c}_{rr}\|_{L^2}
+\|\ti{c}_{rr}\|^{\frac{1}{2}}_{L^2}\|\ti{c}_{rrr}\|^{\frac{1}{2}}_{L^2}
)\\
&+C_0\ka \|\ti{w}\|_{H^1}
(\lambda+\|c_0\|_{H^1})(\|\ti{c}_{rr}\|_{L^2}
+\|\ti{c}_{rr}\|^{\frac{1}{2}}_{L^2}\|\ti{c}_{rrr}\|^{\frac{1}{2}}_{L^2}
)\\
\leq & \frac{1}{8}\va \|\ti{c}_{rrr}\|^2_{L^2}
+\|\ti{c}_{rr}\|^2_{L^2}
+C_{0}(\lambda+\|c_0\|_{H^1}+1)^2\ka^2(1+\va^{-\frac{1}{2}})
(\|\ti{w}\|_{H^1}^2\|\ti{c}\|_{H^1}^2+\|\ti{w}\|_{H^1}^2).
\end{split}
\enn
The boundary conditions in \eqref{e3} and \eqref{b28} entail that
\ben
\begin{split}
G_3=&2[w^0 c^0_r \ti{c}\,\ti{c}_{rr}]\big|_{r=a}^{r=b}\\
\leq & C_0\ka\|w^0\|_{L^\infty}(\lambda+\|c_0\|_{H^1})
\|\ti{c}\|_{L^\infty}\|\ti{c}_{rr}\|_{L^\infty}\\
\leq &C_0\ka\|w^0\|_{H^1}(\lambda+\|c_0\|_{H^1})\|\ti{c}\|_{H^1}
(\|\ti{c}_{rr}\|_{L^2}
+\|\ti{c}_{rr}\|^{\frac{1}{2}}_{L^2}\|\ti{c}_{rrr}\|^{\frac{1}{2}}_{L^2}
)\\
\leq & \frac{1}{8}\va \|\ti{c}_{rrr}\|^2_{L^2}
+\|\ti{c}_{rr}\|^2_{L^2}
+C_{0}(\lambda+\|c_0\|_{H^1})^2\ka^2(1+\va^{-\frac{1}{2}})
\|w^0\|_{H^1}^2\|\ti{c}\|_{H^1}^2.
\end{split}
\enn
It follows from \eqref{a82}, \eqref{a83} and the Gagliardo-Nirenberg interpolation inequality that
\ben
\begin{split}
G_4=&-2\ka[(w^0\ti{c}\ti{c}_{rr})(b,t)
+(w^0\ti{c}\ti{c}_{rr})(a,t)]\\
\leq &4\ka\|w^0\|_{L^\infty}\|\ti{c}\|_{L^\infty}
\|\ti{c}_{rr}\|_{L^\infty}\\
\leq &C_0\ka\|w^0\|_{H^1}\|\ti{c}\|_{H^1}
(\|\ti{c}_{rr}\|_{L^2}
+\|\ti{c}_{rr}\|^{\frac{1}{2}}_{L^2}\|\ti{c}_{rrr}\|^{\frac{1}{2}}_{L^2})\\
\leq & \frac{1}{8}\va \|\ti{c}_{rrr}\|^2_{L^2}
+\|\ti{c}_{rr}\|^2_{L^2}
+C_0\ka^2(1+\va^{-\frac{1}{2}})
\|w^0\|_{H^1}^2\|\ti{c}\|_{H^1}^2.
\end{split}
\enn
Collecting the above estimates for $G_1$ - $G_4$, one deduces that
\ben
\begin{split}
K_{20}&
\leq \frac{1}{2}\va \|\ti{c}_{rrr}\|_{L^2}^2
+4\|\ti{c}_{rr}\|_{L^2}^2\\
&+C_0\ka^2(\lambda+\|c_0\|_{H^1}+1)^4(1+\va^{-\frac{1}{2}})
(\|w^\va\|_{H^1}^2\|\ti{c}\|_{H^1}^2+\|\ti{w}\|_{H^1}^2
+\|\ti{w}\|_{H^1}^2\|\ti{c}\|_{H^1}^2
+\|w^0\|_{H^1}^2\|\ti{c}\|_{H^1}^2).
\end{split}
\enn
Substituting the above estimates for $K_{17}$ - $K_{20}$ into \eqref{a100}, we arrive at
\ben
\begin{split}
\frac{d}{dt}&\int_a^b\ti{c}_{rr}^2dr
+\va\int_a^b\ti{c}_{rrr}^2dr\\
\leq&
C_0(\|w^0\|_{H^1}+\|w^\va\|_{H^1}+\|w^\va\|_{H^1}\|c^\va\|_{L^\infty}+
\|c^\va\|_{L^\infty}^2+1)\|\ti{c}_{rr}\|^2_{L^2}
+C_0\va(\|c^0\|_{H^3}^2+\|c^\va_r\|_{L^2}^2)\\
&+C_0(\|\ti{w}_t\|^2_{L^2}+\|\ti{w}_r\|^2_{L^2}
+\|\ti{w}\|_{H^1}^2\|c^0\|_{H^2}^2
+\|w^\va\|_{H^1}^2\|\ti{c}_r\|_{L^2}^2
+\|\ti{w}_r\|^2_{L^2} \|\ti{c}_r\|_{L^2}^2+\|w^0\|_{H^2}^2\|\ti{c}\|_{H^1}^2)\\
&+C_0\ka^2(1+\va^{-\frac{1}{2}})(\lambda+\|c_0\|_{H^1}+1)^4
(\|w^\va\|_{H^1}^2\|\ti{c}\|_{H^1}^2+\|\ti{w}\|_{H^1}^2
+\|\ti{w}\|_{H^1}^2\|\ti{c}\|_{H^1}^2
)\\
&+C_0\ka^2(1+\va^{-\frac{1}{2}})(\lambda+\|c_0\|_{H^1}+1)^4
(
\|w^0\|_{H^1}^2\|\ti{c}\|_{H^1}^2
+\|\ti{c}_t\|_{H^1}^2),
\end{split}
\enn
which, along with Gronwall's inequality, \eqref{a0}, \eqref{a000}, \eqref{a1}, \eqref{a61} and \eqref{a67} gives rise to
 \be\label{b29}
\begin{split}
\sup_{0<t\leq T}\int_a^b\ti{c}_{rr}^2(t)dr
+\va\int_0^T\int_a^b\ti{c}_{rrr}^2drdt
\leq& C_{\lambda}(1+\exp\{\exp\{\exp\{\exp\{C_{\lambda}\ka^2\}\}\}\})^{5}\cdot\va\\
&+C_{\lambda}\ka^2(1+\exp\{\exp\{\exp\{\exp\{C_{\lambda}\ka^2\}\}\}\})^{5}
\cdot\va^{\frac{1}{2}}.
\end{split}
\ee
By a similar argument used in deriving \eqref{a75}, one gets
\be\label{c35}
\begin{split}
\int_a^b\ti{w}_{rr}^2dr
\leq& C_0
\|w^\va\|^2_{H^1}(\|\ti{c}_r\|_{L^2}^2+\|\ti{c}_{rr}\|_{L^2}^2)
+C_0(\|\ti{w}\|_{H^1}^2\|c^0\|_{H^2}^2
+\|w^\va\|_{H^1}^2 \|\ti{c}_r\|_{L^2}^2)\\
&+C_0(\|\ti{w}_t\|_{L^2}^2+\|\ti{w}_r\|_{L^2}^2),
\end{split}
\ee
which, in conjunction with \eqref{a0}, \eqref{a000},  \eqref{a61}, \eqref{a67} and \eqref{b29} leads to
\be\label{b30}
\begin{split}
\sup_{0<t\leq T}\int_a^b\ti{w}_{rr}^2(t)dr
\leq& C_{\lambda}(1+\exp\{\exp\{\exp\{\exp\{C_{\lambda}\ka^2\}\}\}\})^{6}\cdot\va\\
&+C_{\lambda}\ka^2(1+\exp\{\exp\{\exp\{\exp\{C_{\lambda}\ka^2\}\}\}\})^{6}
\cdot\va^{\frac{1}{2}}.
\end{split}
\ee
Collecting \eqref{b29} and \eqref{b30}
 one derives \eqref{a95}. The proof is finished.

\end{proof}

\begin{lemma}\label{l12}
 Let the assumptions in Theorem \ref{t2} hold and let $(w^\va,c^\va)$ with $0\leq \va<1$ be the solutions of \eqref{e1}-\eqref{e2}, derived in Lemma \ref{l9}. Assume further $\ka=0$.
Then there exists a constant $C$ depending on $a$, $b$, $n$, $\|w_0\|_{H^2}$, $\|c_0\|_{H^3}$ and $T$ such that
\be\label{c20}
\|w^\va-w^0\|_{L^\infty(0,T;H^2)}^2+\|c^\va-c^0\|_{L^\infty(0,T;H^2)}^2
\leq C\va^2.
\ee
\end{lemma}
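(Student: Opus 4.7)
The central observation for the $\kappa=0$ case is that every boundary condition in \eqref{e4} collapses to a homogeneous Neumann condition for both unknowns. Setting $\kappa=0$ in \eqref{a79} and in its analogue at $r=b$ yields $c^0_r(a,t)=c^0_r(b,t)=0$; combined with \eqref{c4} this forces $\tilde c_r(a,t)=\tilde c_r(b,t)=0$, after which the first boundary condition in \eqref{e4} gives $\tilde w_r(a,t)=\tilde w_r(b,t)=0$. Differentiating in $t$ also produces $\tilde c_{rt}=\tilde w_{rt}=0$ at the endpoints. My plan is to rerun the hierarchy of energy estimates of Lemmas \ref{l6}--\ref{l8}, exploiting the vanishing of these boundary contributions to upgrade every factor of $\va^{1/2}$ on the right-hand sides of \eqref{a61}, \eqref{a67}, \eqref{a95} into a factor of $\va$.

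Concretely, in the analogue of Lemma \ref{l6} I would test the first equation of \eqref{e4} against $\tilde w$ and the second against $\tilde c$ and $-\tilde c_{rr}$. All boundary terms from integration by parts vanish; the inhomogeneity $\va c^0_{rr}$ is recast (after one more integration by parts using $\tilde c_r|_{r=a,b}=0$) as $\va\int c^0_{rrr}\tilde c_r\,dr\leq C\va^2\|c^0\|_{H^3}^2+\tfrac{1}{4}\|\tilde c_r\|_{L^2}^2$, which is $O(\va^2)$ by Theorem \ref{t1}(iii), and the companion term $\va\tfrac{n-1}{r}c^\va_r$ is handled analogously. Summing the three inequalities with weights chosen so that $\|\tilde w_r\|_{L^2}^2$ absorbs its cross-contribution and invoking Gronwall gives
\[
\|\tilde w\|_{L^\infty(0,T;L^2)}^2+\|\tilde c\|_{L^\infty(0,T;H^1)}^2+\int_0^T\!\|\tilde w_r\|_{L^2}^2\,dt+\va\int_0^T\!\|\tilde c_{rr}\|_{L^2}^2\,dt\leq C\va^2.
\]
Then, imitating the two-step procedure leading to \eqref{a66}--\eqref{a67}, I would test the first equation of \eqref{e4} by $\tilde w_t$ and its $t$-derivative by $\tilde w_t$, and test \eqref{a30} by $\tilde c_{rt}$; zero initial data and the Neumann conditions (in particular $\va[\tilde c_{rr}\tilde c_{rt}]_a^b=0$) eliminate every boundary term and produce
\[
\|\tilde w_r\|_{L^\infty(0,T;L^2)}^2+\|\tilde w_t\|_{L^\infty(0,T;L^2)}^2+\int_0^T\!(\|\tilde w_{rt}\|_{L^2}^2+\|\tilde c_{rt}\|_{L^2}^2)\,dt\leq C\va^2.
\]

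For the $H^2$ bound on $\tilde c$ I would follow the scheme of Lemma \ref{l8}, testing \eqref{a30} against $-\tilde c_{rrr}$. No spatial weight is necessary now, because the leading boundary contribution $-\va[\tilde c_{rt}\tilde c_{rr}]_a^b$ vanishes thanks to $\tilde c_{rt}\equiv 0$ at the endpoints. The remaining boundary terms $\va[c^0_{rrr}\tilde c_{rr}]_a^b$ and $[(\tilde wc^\va+w^0\tilde c)_r\tilde c_{rr}]_a^b$ are handled by reading off the traces of $\tilde c_{rr}$ from the second equation of \eqref{e4} evaluated at $r=a,b$, which (using $c^\va_r=0$ there) gives $\va\tilde c_{rr}(a,t)=\tilde c_t(a,t)+\tilde w(a,t)c^\va(a,t)+w^0(a,t)\tilde c(a,t)-\va c^0_{rr}(a,t)$ and similarly at $r=b$, combined with the Sobolev and time-derivative bounds from the previous steps. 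Young and Gronwall then yield $\|\tilde c_{rr}\|_{L^\infty(0,T;L^2)}^2\leq C\va^2$. Finally, solving the first equation of \eqref{e4} algebraically for $\tilde w_{rr}$ and taking the $L^2$ norm gives $\|\tilde w_{rr}\|_{L^\infty(0,T;L^2)}^2\leq C\va^2$, completing \eqref{c20}.

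The main obstacle will be this last $H^2$ step for $\tilde c$: even though the dominant boundary term vanishes for free, controlling the auxiliary boundary contributions requires $L^\infty$-in-space bounds on $\tilde c_{rr}$ extracted from the PDE traces (which carry a $\va^{-1}$ prefactor) together with the time-derivative bounds of Step~2. Ensuring that every such contribution actually lands at order $\va^2$ rather than $\va$, and that the nonlinear coupling with $\tilde w$ does not leak a factor of $\va^{-1/2}$, is the principal technical point.
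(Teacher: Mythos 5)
Your overall strategy coincides with the paper's: observe that for $\ka=0$ all boundary data become homogeneous Neumann, so $\ti{c}_r=\ti{w}_r=\ti{c}_{rt}=\ti{w}_{rt}=0$ at $r=a,b$, and then rerun the energy hierarchy of Lemmas \ref{l6}--\ref{l8} with every boundary term gone, landing at order $\va^2$. Your first three steps (the $L^2$/$H^1$ level, then the $\ti{w}_t$, $\ti{w}_{rt}$, $\ti{c}_{rt}$ level) reproduce the paper's \eqref{c27}, \eqref{c28}, \eqref{c33} essentially verbatim, including the unweighted test function $-\ti{c}_{rrr}$ in place of the weight $(r-a)^2(r-b)^2$ used when $\ka>0$.

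The gap is in your final $H^2$ step for $\ti{c}$. You identify the boundary term $\big[(\ti{w}c^\va+w^0\ti{c})_{r}\ti{c}_{rr}\big]\big|_{r=a}^{r=b}$ as the principal obstacle and propose to control it by reading the trace of $\ti{c}_{rr}$ off the second equation of \eqref{e4}, which introduces a factor $\va^{-1}$ that you admit you cannot show is compensated. But this term requires no estimate at all: expanding $(\ti{w}c^\va+w^0\ti{c})_{r}=\ti{w}_rc^\va+\ti{w}c^\va_r+w^0_r\ti{c}+w^0\ti{c}_r$, every summand contains one of $\ti{w}_r$, $c^\va_r$, $w^0_r$, $\ti{c}_r$, each of which vanishes at $r=a,b$ when $\ka=0$ (this is the paper's observation \eqref{c30}). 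So the boundary contribution is identically zero and your trace argument, with its uncontrolled $\va^{-1}$, is both unnecessary and not closable as written. Relatedly, the boundary term $\va[c^0_{rrr}\ti{c}_{rr}]\big|_{r=a}^{r=b}$ you list does not arise: the inhomogeneity $\va c^0_{rrr}$ is kept as a bulk term $-2\va\int_a^b c^0_{rrr}\ti{c}_{rrr}\,dr$ and absorbed by Cauchy--Schwarz against $\va\|\ti{c}_{rrr}\|_{L^2}^2$, as in \eqref{c31}; integrating it by parts only manufactures a spurious boundary term and would in any case require $c^0\in H^4$. With the vanishing of \eqref{c30} in hand, the remaining computation is exactly the paper's \eqref{c31}--\eqref{c32} followed by Gronwall, and the $\ti{w}_{rr}$ bound from the first equation of \eqref{e4} closes the argument as you describe.
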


\begin{proof} First, by setting $\kappa=0$ in \eqref{a98} and \eqref{a99} one derives
\be\label{c21}
c^0_r(a,t)=c^0_r(b,t)=0,\quad \ \ \forall \ t\geq 0,
\ee
which, along with the boundary conditions in \eqref{e3} indicates that
\be\label{c22}
w^0_r(a,t)=w^0_r(b,t)=0,\quad \ \ \forall \ t\geq 0.
\ee
Since $\kappa=0$, it follows from the boundary conditions in \eqref{e2} that
\be\label{c29}
c^\va_r(a,t)=c^\va_r(b,t)=w^\va_r(a,t)=w^\va_r(b,t)=0,\ \ \quad \forall \ t\geq 0,
\ee
which, in conjunction with \eqref{c21} and \eqref{c22} gives rise to
\be\label{c23}
\ti{c}_r(a,t)=\ti{c}_r(b,t)=\ti{w}_r(a,t)=\ti{w}_r(b,t)=0,
\ \ \quad \forall \ t\geq 0.
\ee
Then from \eqref{c23}, one easily gets
\be\label{c24}
\va [\ti{c}_{rr}\ti{c}_r]|_{r=a}^{r=b}=0,\quad \ \ \forall \ t\geq 0.
\ee
Substituting \eqref{a86} into \eqref{a26} and using \eqref{c24} leads to
\be\label{c25}
\begin{split}
\frac{1}{2}\frac{d}{dt}&\int_a^b \ti{c}_r^2dr
+\va \int_a^b\ti{c}_{rr}^2 dr\\
\leq & \frac{1}{4}\|\ti{w}_r\|^2_{L^2}
+C_0(\|c^\va\|_{H^1}^2+\|w^0\|_{H^2}^2+1)
\|\ti{c}_r\|^2_{L^2}\\
&+C_0(\|\ti{w}\|^2_{L^2}+\|\ti{c}\|^2_{L^2})
+C_0\va^2(\|c^0\|^2_{H^3}+\|c^\va_{rr}\|^2_{L^2}+\|c^\va_{r}\|^2_{L^2})
.
\end{split}
\ee
Adding \eqref{c25} and \eqref{a25} to \eqref{a24}, we have
\ben
\begin{split}
\frac{d}{dt}&\left[
\int_a^b \ti{w}^2dr+
\int_a^b \ti{c}^2dr+
\int_a^b \ti{c}_r^2dr
\right]
+\int_a^b  \ti{w}_r^2dr+\va\int_a^b \ti{c}_{rr}^2dr\\
&\leq C_0(\|w^\va\|_{H^1}^2+\|c^\va\|_{H^1}^2+\|w^0\|_{H^2}^2+1)\|\ti{c}_r\|^2_{L^2}
+C_0(\|c^\va\|_{H^1}^2+\|c^0\|_{H^2}^2+1)\|\ti{w}\|^2_{L^2}\\
&\ \ \ \ +C_0(\|w^0\|_{H^1}+1)\|\ti{c}\|^2_{L^2}
+C_0\va^2(\|c^\va_{rr}\|_{L^2}^2+\|c^\va_{r}\|_{L^2}^2+\|c^0\|_{H^3}^2).
\end{split}
\enn
Applying Gronwall's inequality to the above inequality, using \eqref{c19} and \eqref{a000} we arrive at
\be\label{c27}
\begin{split}
\sup_{0<t\leq T}\left[
\int_a^b \ti{w}^2(t)dr+
\int_a^b \ti{c}^2(t)dr+
\int_a^b \ti{c}_r^2(t)dr
\right]+\int_0^T\int_a^b  \ti{w}_r^2drdt
\leq
 C\va^2.
\end{split}
\ee
It follows from \eqref{c23} that
\be\label{c26}
\va[\ti{c}_{rr}\ti{c}_{rt}]\big|_{r=a}^{r=b}=0,\quad  \ \ \forall \ t\geq 0.
\ee
Substituting \eqref{a93} into \eqref{a90} and using \eqref{c26}, one gets
\ben
\begin{split}
\va\frac{d}{dt}&\int_a^b \ti{c}_{rr}^2dr
+\int_a^b \ti{c}_{rt}^2dr\\
\leq &C_0(\|c^\va\|_{H^1}^2\|\ti{w}\|_{H^1}^2+\|w^0\|_{H^2}^2
\|\ti{c}\|_{H^1}^2)
+C_0\va^2(\|c^0\|^2_{H^3}+\|c^\va_{rr}\|_{L^2}^2+\|c^\va_{r}\|_{L^2}^2).
\end{split}
\enn
Integrating the above inequality with respect to $t$ and using \eqref{c27}, \eqref{c19} and \eqref{a000} we get
\be\label{c28}
\begin{split}
\sup_{0<t\leq T}\va \int_a^b \ti{c}_{rr}^2dr
+\int_0^T\int_a^b \ti{c}_{rt}^2drdt
\leq
C
\va^2.
\end{split}
\ee
Applying Gronwall's inequality to \eqref{a28} and using \eqref{c27}, \eqref{c28}, \eqref{c19} and \eqref{a000}, we derive
\be\label{c33}
\begin{split}
\sup_{0<t\leq T}\left[\int_a^b \ti{w}_t^2 dr
+\int_a^b \ti{w}_r^2 dr\right]
+\int_0^T\int_a^b \ti{w}_{t}^2 drdt
+\int_0^T\int_a^b \ti{w}_{rt}^2 drdt
\leq C\va^2.
\end{split}
\ee
It follows from \eqref{c21}-\eqref{c23} that
\be\label{c30}
2[\ti{c}_{rt}\ti{c}_{rr}]\big|_{r=a}^{r=b}
=2[(\ti{w}c^\va+w^0\ti{c})_{r}\ti{c}_{rr}]
\big|_{r=a}^{r=b}=0,\quad \ \ \forall \ t\geq 0.
\ee
Substituting \eqref{c31} and \eqref{c32} into \eqref{a100} and using \eqref{c30} to have
\ben
\begin{split}
\frac{d}{dt}&\int_a^b\ti{c}_{rr}^2dr
+\va\int_a^b\ti{c}_{rrr}^2dr\\
\leq&
C_0(\|w^0\|_{H^1}+\|w^\va\|_{H^1}+\|w^\va\|_{H^1}\|c^\va\|_{L^\infty}+
\|c^\va\|_{L^\infty}^2+1)\|\ti{c}_{rr}\|^2_{L^2}
+C_0\va(\|c^0\|_{H^3}^2+\|c^\va_r\|_{L^2}^2)\\
&+C_0(\|\ti{w}_t\|^2_{L^2}+\|\ti{w}_r\|^2_{L^2}
+\|\ti{w}\|_{H^1}^2\|c^0\|_{H^2}^2
+\|w^\va\|_{H^1}^2\|\ti{c}_r\|_{L^2}^2
+\|\ti{w}_r\|^2_{L^2} \|\ti{c}_r\|_{L^2}^2+\|w^0\|_{H^2}^2\|\ti{c}\|_{H^1}^2).
\end{split}
\enn
Applying Gronwall's inequality to the above inequality and using \eqref{c27}, \eqref{c33}, \eqref{c19}, \eqref{a000} and \eqref{b25}, one gets
\be\label{c34}
\begin{split}
\sup_{0<t\leq T}\int_a^b\ti{c}_{rr}^2(t)dr
+\va\int_0^T\int_a^b\ti{c}_{rrr}^2drdt
\leq C\va^2.
\end{split}
\ee
By \eqref{c35}, \eqref{c27}, \eqref{c33}, \eqref{c34}, \eqref{c19} and \eqref{a000}, one further deduces that
\be\label{c36}
\begin{split}
\sup_{0<t\leq T}\int_a^b\ti{w}_{rr}^2(t)dr
\leq C\va^2.
\end{split}
\ee
Collecting \eqref{c27}, \eqref{c33}, \eqref{c34} and \eqref{c36}, we end up with the desired estimate. The proof is finished.
\end{proof}

Based on the results derived in Lemma \ref{l0} - Lemma \ref{l12}, we can prove Theorem \ref{t2} now.\\

\textbf{\emph{Proof of Theorem \ref{t2}.}} We first prove Part (i).
 One deduces from the Sobolev embedding inequality, \eqref{a60} and \eqref{a66} that
\ben
\begin{split}
\|&w^\va-w^0\|_{L^\infty(0,T;C[a,b])}
+\|c^\va-c^0\|_{L^\infty(0,T;C[a,b])}\\
&\leq C_0\|w^\va-w^0\|_{L^\infty(0,T;H^1)}
+C_0\|c^\va-c^0\|_{L^\infty(0,T;H^1)}\\
&\leq C_{\lambda,\ka}\va^{\frac{1}{4}},
\end{split}
\enn
where the constant $C_{\lambda,\ka}$ depends on $a$, $b$, $n$, $\|w_0\|_{H^2}$, $\|c_0\|_{H^3}$, $T$, $\lambda$ and $\kappa$.

We thus derive \eqref{b1} and proceed to proving \eqref{b2}.
A direct computation leads to
\be\label{a103}
\delta^2\leq \frac{4}{(b-a)^2}(r-a)^2(r-b)^2,\ \  \quad r\in (a+\delta,b-\delta)
 \ee
  provided that $0<\delta<\frac{b-a}{2}$. Thus it follows from \eqref{a103} and Lemma \ref{l8} that
\ben
\delta^2\int_{a+\delta}^{b-\delta}
\ti{w}_{rr}^2(r,t)\,dr
\leq \frac{4}{(b-a)^2}
\int_{a+\delta}^{b-\delta}(r-a)^2(r-b)^2 \ti{w}_{rr}^2(r,t)\,dr
\leq C_{\lambda,\ka}\va^{1/2},\quad \ \ \forall \ t\in [0,T]
\enn
with the constant $C_{\lambda,\ka}$ depending on $a$, $b$, $n$, $\|w_0\|_{H^2}$, $\|c_0\|_{H^3}$, $T$, $\lambda$ and $\kappa$,
which, in conjunction with \eqref{a66} and the Gagliardo-Nirenberg interpolation inequality entails that
\be\label{a102}
\begin{split}
\|w_r^\va-w_r^0\|_{L^\infty(0,T;C[a+\delta,b-\delta])}
\leq& C_0 \big(\|\ti{w}_r\|_{L^\infty(0,T;L^2(a+\delta,b-\delta))}\\
&+\|\ti{w}_r\|_{L^\infty(0,T;L^2(a+\delta,b-\delta))}^{1/2}
\|\ti{w}_{rr}\|_{L^\infty(0,T;L^2(a+\delta,b-\delta))}^{1/2}
\big)\\
\leq &C_{\lambda,\ka}\big(\va^{1/4}+\va^{1/8}\cdot \va^{1/8}\delta^{-1/2}\big)\\
\leq& C_{\lambda,\ka}\va^{1/4}\delta^{-1/2},
\end{split}
\ee
provided $0<\delta<\min\{1,\frac{b-a}{2}\}$. By a similar argument used in deriving \eqref{a102}, one deduces from Lemma \ref{l8} and \eqref{a60} that
\ben
\begin{split}
\|c_r^\va-c_r^0\|_{L^\infty(0,T;C[a+\delta,b-\delta])}
\leq C_{\lambda,\ka}\va^{1/4}\delta^{-1/2},
\end{split}
\enn
which, along with \eqref{a102} gives \eqref{b2}.

 We next prove the equivalence between \eqref{b3} and \eqref{sc1}. We shall prove that \eqref{b3} implies \eqref{sc1} by argument of contradiction. Assume \eqref{sc1} is false, that is
 \ben
 w^0(a,t)=w^0(b,t)=0,\quad \forall\ t\in [0,T].
 \enn
 Then from Lemma \ref{l0} we know that \eqref{a82} and \eqref{a83} hold true. Thus it follows from the Gagliardo-Nirenberg interpolation inequality, \eqref{a61}, \eqref{a67} and \eqref{a95} that
\ben
\begin{split}
\|&w^\va_r-w^0_r\|_{L^\infty(0,T;C[a,b])}
+\|c^\va_r-c^0_r\|_{L^\infty(0,T;C[a,b])}\\
&\leq C_0(\|\ti{w}_r\|_{L^2}
+\|\ti{w}_r\|_{L^2}^{\frac{1}{2}}\|\ti{w}_{rr}\|_{L^2}
^{\frac{1}{2}})
+C_0(\|\ti{c}_r\|_{L^2}
+\|\ti{c}_r\|_{L^2}^{\frac{1}{2}}\|\ti{c}_{rr}\|_{L^2}
^{\frac{1}{2}})\\
&\leq
 C_{\lambda,\ka}\va^{\frac{3}{8}},
\end{split}
\enn
where the constant $C_{\lambda,\ka}$ depends on $a$, $b$, $n$, $\|w_0\|_{H^2}$, $\|c_0\|_{H^3}$, $T$, $\lambda$ and $\kappa$. Thus
\ben
\lim_{\va\to 0}(\|w^\va_r-w^0_r\|_{L^\infty(0,T;C[a,b])}
+\|c^\va_r-c^0_r\|_{L^\infty(0,T;C[a,b])})
=0,
\enn
which, contradicts with \eqref{b3}. Thus \eqref{b3} indicates \eqref{sc1}. We proceed to proving that \eqref{sc1} implies \eqref{b3}. Without loss of generality we assume that there exists $t_0\in [0,T]$ such that
\be\label{b43}
w^0(a,t_0)>0.
\ee
Then it follows from Lemma \ref{l11} that there exists $t_1\in [0,T]$ such that
\be\label{b41}
w^0(a,t_1)>0
\ee
and
\be\label{b31}
c_0(a)e^{-\int_0^{t_1} w^0(a,\tau)d\tau}
\left\{
1-e^{-\int_0^{t_1} w^0(a,\tau)c^0(a,\tau)d\tau}
\right\}
-\lambda \left\{1-e^{-\int_0^{t_1} w^0(a,\tau)[c^0(a,\tau)+1]d\tau}\right\}
\neq0.
\ee
From \eqref{b26} we know that
\ben
\begin{split}
c^\va_r&(a,t_1)-c^0_r(a,t_1)\\
=&\ka[c^\va(a,t_1)-c^0(a,t_1)]\\
&+\ka c_0(a)e^{-\int_0^{t_1} w^0(a,\tau)d\tau}
\left\{
1-e^{-\int_0^{t_1} w^0(a,\tau)c^0(a,\tau)d\tau}
\right\}
-\ka\lambda \left\{1-e^{-\int_0^{t_1} w^0(a,\tau)[c^0(a,\tau)+1]d\tau}\right\},
\end{split}
\enn
which, along with \eqref{b1}, \eqref{b31} and the fact $\ka>0$ leads to
\be\label{b45}
\begin{split}
&\liminf_{\va\to 0} |c^\va_r(a,t_1)-c^0_r(a,t_1)|\\
\geq&
\ka \Big|c_0(a)e^{-\int_0^{t_1} w^0(a,\tau)d\tau}
\left\{
1-e^{-\int_0^{t_1} w^0(a,\tau)c^0(a,\tau)d\tau}
\right\}
-\lambda \left\{1-e^{-\int_0^{t_1} w^0(a,\tau)[c^0(a,\tau)+1]d\tau}\right\}\Big|\\
&-\ka\limsup_{\va\to 0}|c^\va(a,t_1)-c^0(a,t_1)|\\
=& \ka \Big|c_0(a)e^{-\int_0^{t_1} w^0(a,\tau)d\tau}
\left\{
1-e^{-\int_0^{t_1} w^0(a,\tau)c^0(a,\tau)d\tau}
\right\}
-\lambda \left\{1-e^{-\int_0^{t_1} w^0(a,\tau)[c^0(a,\tau)+1]d\tau}\right\}\Big|\\
>&0.
\end{split}
\ee
Hence,
\be\label{b42}
\liminf_{\va\to 0} \|c^\va_r(a,t)-c^0_r(a,t)\|_{L^\infty(0,T;C[a,b])}
\geq \liminf_{\va\to 0} |c^\va_r(a,t_1)-c^0_r(a,t_1)|>0.
\ee
From the first boundary condition in \eqref{e4} one gets
\be\label{b44}
w^\va_r(a,t_1)-w^0_r(a,t_1)
=w^\va(a,t_1)[c^\va_r(a,t_1)-c^0_r(a,t_1)]
+[w^\va(a,t_1)-w^0(a,t_1)]c^0_r(a,t_1).
\ee
Noting $w^\va\geq 0$ in $[a,b]\times [0,T]$, it follows from \eqref{b1}, \eqref{b45} and \eqref{b41} that
\ben
\begin{split}
&\liminf_{\va\to 0} [w^\va(a,t_1)|c^\va_r(a,t_1)-c^0_r(a,t_1)|]\\
\geq& \liminf_{\va\to 0} w^\va(a,t_1)\cdot\liminf_{\va\to 0}|c^\va_r(a,t_1)-c^0_r(a,t_1)|\\
=& w^0(a,t_1)\cdot\liminf_{\va\to 0}|c^\va_r(a,t_1)-c^0_r(a,t_1)|\\
>&0,
\end{split}
\enn
which, in conjunction with \eqref{b44} and \eqref{b1} gives rise to
\ben
\begin{split}
&\liminf_{\va\to 0}|w^\va_r(a,t_1)-w^0_r(a,t_1)|\\
\geq &\liminf_{\va\to 0} [w^\va(a,t_1)|c^\va_r(a,t_1)-c^0_r(a,t_1)|]
-\limsup_{\va\to 0}|w^\va(a,t_1)-w^0(a,t_1)|\,|c^0_r(a,t_1)|\\
=&\liminf_{\va\to 0} [w^\va(a,t_1)|c^\va_r(a,t_1)-c^0_r(a,t_1)|]\\
> &0.
\end{split}
\enn
Hence,
\be\label{b46}
\liminf_{\va\to 0} \|w^\va_r(a,t)-w^0_r(a,t)\|_{L^\infty(0,T;C[a,b])}
\geq \liminf_{\va\to 0} |w^\va_r(a,t_1)-w^0_r(a,t_1)|>0.
\ee
Collecting \eqref{b42} and \eqref{b46} we proved \eqref{b3} by assuming \eqref{b43}. By a similar argument, one can deduce \eqref{b3} on assumption $w^0(b,t)>0$ for some $t\in [0,T]$. Thus \eqref{sc1} indicates \eqref{b3}. Part (ii) follows directly from Lemma \ref{l12}. The proof is finished.
\endProof

\section*{Acknowledgements}
This work is supported by National Natural Science Foundation of China (No. 11901139), China Postdoctoral Science Foundation
(Nos. 2019M651269, 2020T130151).

\setlength{\bibsep}{0.5ex}
\bibliography{rf}
\bibliographystyle{plain}
\end{document}